\documentclass[11pt]{amsart}

\usepackage{amssymb,mathtools}
\usepackage{enumitem}
\usepackage[hidelinks]{hyperref}
\numberwithin{equation}{section}
\hypersetup{pdftitle={Rank Stabilization for Sparse Coordinate Completion of Unit-Norm Tight Frames},pdfauthor={Dongwei Li}}
\newtheorem{theorem}{Theorem}[section]
\newtheorem{lemma}[theorem]{Lemma}
\newtheorem{proposition}[theorem]{Proposition}
\newtheorem{corollary}[theorem]{Corollary}
\theoremstyle{definition}
\newtheorem{example}[theorem]{Example}
\theoremstyle{remark}
\newtheorem{remark}[theorem]{Remark}

\newcommand{\Sym}{\operatorname{Sym}}
\newcommand{\Symo}{\operatorname{Sym}_0}
\newcommand{\rank}{\operatorname{rank}}
\newcommand{\tr}{\operatorname{tr}}
\newcommand{\im}{\operatorname{im}}
\newcommand{\Span}{\operatorname{span}}
\newcommand{\C}{\mathbb C}
\newcommand{\OO}{\mathcal O}
\newcommand{\DD}{\mathcal D}
\newcommand{\RR}{\mathcal R}

\newcommand{\off}{\mathrm{off}}
\newcommand{\Nd}{N_d}

\title[Rank Stabilization for Sparse Coordinate Completion]{Rank Stabilization for Sparse Coordinate Completion of Unit-Norm Tight Frames}
\author{Dongwei Li}
\address{School of Mathematics, Hefei University of Technology, Hefei 230601, China}
\email{dongweili@hfut.edu.cn}
\subjclass[2020]{Primary 42C15, 05B35; Secondary 15A83, 14M99}
\keywords{sparse coordinate completion, finite unit-norm tight frames, algebraic matroids, rank stabilization, generic fiber dimension, matroid union}

\begin{document}

\begin{abstract}
We determine generic completion-fiber dimensions for unit-norm tight frames
when each partially observed column has exactly two missing coordinates.
We work on the complex algebraic variety defined by the real frame equations.
The missing coordinate pairs form a labelled multigraph.  For every $d\ge4$
and every frame length $R\ge N_d:=\binom{d+1}{2}-1$, we prove that the
missing-pair Jacobian matroid is the Rado matroid of an explicit subspace
arrangement.  Its rank is therefore given by a graph-theoretic minimum
formula and is independent of $R$; the complementary rank defect is the
generic fiber dimension.  The main step is a realization theorem at length
$N_d$.  It combines a transversal--graphic matroid partition with a
zero-coordinate moment submersion and a compatible decomposition of a
generic residual matrix.  Basis extension and stability under fully observed
columns then yield the rank formula at every larger length.  We also give
an explicit defect formula in dimension four and a local real counterpart
on the smooth real frame locus.  A common-rotation obstruction bounds the
smallest uniform stabilization threshold from below by $2d-1$.
\end{abstract}

\maketitle

\section{Introduction}\label{sec:intro}

Finite unit-norm tight frames (FUNTFs) provide redundant representations
with useful reconstruction and erasure properties
\cite{BenedettoFickus,CasazzaKovacevic,BodmannPaulsen,Li2026}.
Their geometry and construction have been studied through differential,
algebraic, and symplectic methods
\cite{DykemaStrawn,CahillMixonStrawn,CahillFickusMixonPoteetStrawn,
NeedhamShonkwiler}; see also \cite{Waldron} for general background.
Here we ask how the positions of missing scalar coordinates determine the
dimension of the family of possible frame completions.  This question
belongs to the algebraic approach to matrix completion
\cite{SingerCucuringu,KiralyTheranTomioka,Tsakiris2024}, in which coordinate
projections and their algebraic matroids encode generic identifiability.

For integers $R>d\ge2$, let
\[
X_{d,R}
:=
\left\{
W=[w_1\ \cdots\ w_R]\in\C^{d\times R}
\;\middle|\;
\begin{array}{l}
WW^T=\dfrac{R}{d}I_d,\\[2pt]
w_a^Tw_a=1\quad(a=1,\ldots,R)
\end{array}
\right\}.
\]
Here $I_d$ is the identity matrix.  Transpose is used throughout: ``unit
norm'' means the bilinear equation $w^Tw=1$, rather than a Hermitian norm
condition.  This is the complex Zariski closure of the real FUNTF locus
\cite[Section~1 and equation~(1.1)]{BFR2020}.  Our main statements concern
complex algebraic completion.  Corollary~\ref{cor:real} gives the corresponding
rank and local fiber dimension on a dense open subset of the smooth real
locus; global real completion counts are a separate question.

Farnsworth and Rodriguez \cite{FarnsworthRodriguez} studied homogenized
FUNTF varieties and degrees of coordinate projections.  Bernstein,
Farnsworth, and Rodriguez \cite{BFR2020} developed the algebraic-matroid
formulation for $X_{d,R}$, characterized its bases in dimension three, and
obtained structural restrictions in higher dimensions.  We study the family
of patterns in which each partially observed column has two missing entries.
For every $d\ge4$ we determine its entire missing-pair Jacobian rank function
once $R\ge N_d$.  In particular, the result measures positive-dimensional
completion ambiguity as well as detecting finite fibers.

The new geometric input is the realization of every admissible critical
pattern by an actual tight frame.  Rado's theorem computes the rank for
independently chosen vectors in certain local subspaces, but frame columns
are coupled by a fixed matrix moment.  We prove that this coupling creates
no additional generic dependence at the stated frame lengths.  Combining
the critical realization with matroid basis extension and the column-extension
observation of \cite[Remark~4.6]{BFR2020} gives the full rank formula.

Write $\Sym(d)$ for the space of complex symmetric $d\times d$
matrices and
\[
\Symo(d):=\{A\in\Sym(d):\tr A=0\}.
\]
Put
\[
\Nd=\binom{d+1}{2}-1=\dim\Symo(d),\qquad d\ge4.
\]
The number $\Nd$ is intrinsic to the trace-free tightness equations.  By
\cite[Theorem~3.3]{BFR2020}, $X_{d,R}$ is irreducible for $R\ge d+2>4$;
in particular this holds throughout the regime $R\ge\Nd$ considered here.
Throughout the paper, a \emph{generic} assertion means that it holds on a
nonempty Zariski-open subset of the relevant irreducible variety.  When
several generic conditions are used simultaneously, we take their finite
intersection.

Our main result concerns every frame length $R\ge\Nd$.  Choose an arbitrary set
$S\subseteq[R]$ of $m$ frame columns.  In each selected column exactly two
coordinates are unobserved, while every coordinate in the other columns is
observed.  If the missing rows of column $a\in S$ are $i$ and $j$, record
that column as an edge $e_a=\{i,j\}$.  Thus the columnwise sparse missing pattern is an
edge-labelled $m$-edge multigraph on $[d]:=\{1,\dots,d\}$.  This is the smallest nontrivial columnwise missing pattern for the
infinitesimal completion problem.  With only one missing coordinate, the
linearized unit-norm equation generically forces that missing tangent
coordinate to vanish.  With two missing coordinates, the same equation leaves
a one-dimensional internal motion generated by a coordinate-plane rotation.
These local rotations are nevertheless coupled by the global tight-frame
moment equation, so their independence is not a purely local question.  The
lower-redundancy example below shows that this coupling can create additional
global dependencies; the stabilization theorem is therefore not automatic
from the local linearization.

For an edge submultiset $T\subseteq S$, let $v(T)$ be the number of incident
vertices and let $c(T)$ be the number of nontrivial connected components of
its simple support.  Set
\[
\rho_d(T)=\frac{v(T)(2d-v(T)+1)}2-c(T),
\qquad
\rho_d(\varnothing)=0.
\]
Let $E_{ij}$ denote the standard matrix unit.  For distinct $i,j$ set
\[
J_{ij}:=E_{ij}-E_{ji},
\]
so that $J_{ji}=-J_{ij}$.  For $1\le i<j\le d$ set
\[
L_{ij}:=\{[J_{ij},X]:X\in\Sym(d)\}\subseteq\Symo(d),
\]
where $[A,B]:=AB-BA$ denotes the commutator.  For an unordered edge
$e=\{i,j\}$ with $i<j$, write $J_e:=J_{ij}$ and $L_e:=L_{ij}$.  Thus the
restricted Jacobian vector associated with a selected column $a$ is
\[
q_a(W):=[J_{e_a},w_aw_a^T]\in\Symo(d).
\]
Let $\pi_S$ be the coordinate projection that forgets precisely the two
coordinates indexed by $e_a$ in every selected column $a\in S$, and put
$Y_S=\overline{\pi_S(X_{d,R})}$.  A generic completion fiber means a fiber
over a point in a suitable nonempty Zariski-open subset of $Y_S$; observations
are thus assumed to be compatible with the frame equations.
For a fixed
labelled set $S$, we call the generic linear matroid represented by
$\{q_a(W):a\in S\}$ the \emph{missing-pair Jacobian matroid}.  Its ground
elements are missing coordinate pairs, so it should be distinguished from the
ordinary algebraic matroid of $X_{d,R}$, whose ground elements are individual
scalar coordinates.

The main theorem identifies this Jacobian matroid with the Rado matroid
of the subspaces $L_e$ whenever $R\ge\Nd$.

\begin{theorem}[High-redundancy rank stabilization and fiber dimension]\label{thm:main}
Let $d\ge4$, $R\ge\Nd$, and let $S\subseteq[R]$ be any set of frame
columns with prescribed missing-pair labels $e_a$.  Then, for generic
$W\in X_{d,R}$,
\[
\rank\{q_a(W):a\in S\}
=
r_d(S):=
\min_{T\subseteq S}
\bigl(|S\setminus T|+\rho_d(T)\bigr).
\]
Moreover the generic fiber of $\pi_S:X_{d,R}\to Y_S$ has dimension
\[
\delta_d(S)
=
\max_{T\subseteq S}
\bigl(|T|-\rho_d(T)\bigr).
\]
For every $S'\subseteq S$, define $r_d(S')$ by the same formula with
$S'$ in place of $S$.  There is a single nonempty Zariski-open subset of
$X_{d,R}$, depending on the fixed labelled set $S$, on which
$\rank\{q_a(W):a\in S'\}=r_d(S')$ simultaneously for every
$S'\subseteq S$.
\end{theorem}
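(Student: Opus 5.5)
The proof has three parts: an upper bound from Rado's theorem, a realization theorem at length $\Nd$ that matches it, and an extension to every $R\ge\Nd$.

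\emph{Upper bound.} Since $q_a(W)=[J_{e_a},w_aw_a^T]\in L_{e_a}$, the vectors $q_a(W)$ are choices of one vector from each subspace $L_{e_a}$. By Rado's theorem their rank is at most
\[
\min_{T\subseteq S}\Bigl(|S\setminus T|+\dim\sum_{a\in T}L_{e_a}\Bigr).
\]
The first step is the linear-algebra identity
\[
\dim\sum_{e\in T}L_e=\rho_d(T).
\]
To prove it, note that $L_{ij}$ consists of the trace-free symmetric matrices supported on rows or columns $i,j$. The sum over the edges of $T$ therefore lies in the span of the entries meeting the vertex set $V(T)$. There are $v(d-v)+\binom{v+1}{2}=v(2d-v+1)/2$ such entries. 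Each connected component of the support imposes one extra linear condition: the trace over that component's vertex block vanishes. The other components contribute nothing to that block's diagonal, so these conditions are independent. One then checks that these are the only conditions by exhibiting enough commutators along a spanning forest. This gives $\rank\le r_d(S)$ for every $W$.

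\emph{Realization at length $\Nd$ (main obstacle).} I need the reverse inequality at generic $W$, and it suffices to exhibit one $W\in X_{d,\Nd}$ achieving $r_d(S)$, since the rank is lower semicontinuous. By matroid basis extension, it is enough to realize, for each $S$, a Rado-independent subset of size $r_d(S)$ as linearly independent vectors. Independent sets extend to bases of size at most $\Nd$, so I may reduce to a critical pattern of $\Nd$ edges that is a Rado basis. I would decompose such a pattern as a transversal part (generic vectors in the $L_e$) plus a graphic part (forest-type pieces carrying the trace constraints). I would then build the frame from columns with prescribed zero coordinates, so that each $q_a$ is nearly a chosen generic element of $L_{e_a}$. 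The tightness constraint would be restored by showing that the moment map $W\mapsto WW^T$ restricted to this zero-coordinate family is a submersion. The residual $\frac{R}{d}I-\sum_a w_aw_a^T$ must then be decomposed compatibly into the remaining columns. This is the hard step, and I would try it first by induction on $d$, starting from $d=4$.

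\emph{Larger $R$, fiber dimension, and uniformity.} For $R>\Nd$, I would use \cite[Remark~4.6]{BFR2020}: adding fully observed columns does not lower generic ranks of existing patterns. Together with the upper bound, this gives equality. Irreducibility \cite[Theorem~3.3]{BFR2020} makes "generic" meaningful. For the fiber dimension, $\pi_S$ forgets $2|S|$ coordinates. The tangent space to the fiber consists of tangent vectors supported on the missing entries. In column $a$, the linearized unit-norm equation leaves the one-dimensional rotation $J_{e_a}w_a$, and the linearized tightness equation says $\sum_a t_a q_a(W)=0$. So the fiber tangent space has dimension $|S|-\rank$. Generic smoothness of $\pi_S$ then gives $\delta_d(S)=|S|-r_d(S)=\max_T\bigl(|T|-\rho_d(T)\bigr)$. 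Finally, there are finitely many subsets $S'\subseteq S$, each with a nonempty open set where the rank is $r_d(S')$. Intersecting these sets in the irreducible $X_{d,R}$ gives the single open set required.
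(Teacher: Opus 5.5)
Your outer architecture matches the paper's. It has the bound from $q_a\in L_{e_a}$; your coordinate description $L_{ij}=\Span\{S_{ij},S_{ik},S_{jk},D_{ij}:k\ne i,j\}$ is correct and gives $\dim\sum_{e\in T}L_e=\rho_d(T)$ more directly than the centralizer argument. It also has extension of a Rado-independent set to an auxiliary critical basis, transport to length $R$, the tangent-kernel identity with generic smoothness, and intersection of finitely many opens. The gap is the critical realization, which you only announce. It is not a formality: for $R\le 2d-2$ the moment coupling does create extra dependencies. Your sketch never uses the value $\Nd=\binom d2+(d-1)$, which is exactly where the length must enter.

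As stated, the plan ``make the zero-coordinate columns submersive for the moment map, then decompose the residual'' fails. After the $\binom d2$ transversal columns, the residual $\frac{\Nd}{d}I-\sum_{a\in A}w_aw_a^T$ has trace $d-1$. It must be a sum of $d-1$ unit rank-one matrices, so it must be singular. But if the transversal moment map is submersive, a generic residual is invertible. You therefore have to impose the codimension-one condition $\det=0$ without destroying Jacobian independence. The paper does this in two moves. It deletes one tournament factor while keeping submersivity (Proposition~\ref{prop:tournament}). It then places the pivot column on $\{w\in Q_h:\det(G_0-ww^T)=0\}$ and proves that its Jacobian images there still span $\operatorname{Star}(h)$ (Lemmas~\ref{lem:quadspan} and~\ref{lem:rankdrop-star}).

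The remaining ingredients are also missing.
\begin{enumerate}[label=\textup{(\roman*)}]
\item Your split cannot be ``generic vectors in $L_e$''. The vector $q_a$ sweeps an at most $(d-1)$-dimensional set inside the $(2d-2)$-dimensional $L_{e_a}$, and on a zero branch $Q_h$ its span is only $\operatorname{Star}(h)$. What works is Edmonds' partition of the critical basis into a basis $A$ of the Rado matroid of $U_e=P_{\OO_d}(L_e)$ and a spanning tree. Next comes a Hall bijection $\phi:A\to E(K_d)$ with $\phi(a)\cap e_a\ne\varnothing$ to fix the heads. Multilinearity of the determinant then gives $\Span\{q_a:a\in A\}=\OO_d$.
\item The singular residuals produced this way must fill an open subset of the irreducible variety $\RR_{d-1}$.
\item A generic point of $\RR_{d-1}$ must have a unit decomposition $\sum_t u_tu_t^T$ indexed by the tree with $u_{t,i}u_{t,j}\ne0$. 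Only then do the tree columns contribute $\pm2u_{t,i}u_{t,j}D_{ij}$ modulo $\OO_d$, giving a block-triangular basis of $\OO_d\oplus\DD_d$.
\end{enumerate}

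Finally, \cite[Remark~4.6]{BFR2020} says that spanning coordinate sets stay spanning. It does not say that generic ranks of arbitrary patterns never drop. It therefore applies only once the auxiliary critical pattern is known to be spanning. You then still need generic injectivity of the differential on $X_{d,R}$, and a minor involving only the columns of your independent set, to return to the original labels.
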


Because the formula holds simultaneously for every subcollection of $S$,
Theorem~\ref{thm:main} determines the entire missing-pair Jacobian matroid.
Its relation to the ordinary algebraic matroid on scalar coordinates is
explicit:
\[
\operatorname{rank}_{\rm alg}(\text{observed coordinates})
=\dim \overline{\pi_S(X_{d,R})}
=\dim X_{d,R}-|S|+r_d(S).
\]
By the fiber-dimension theorem, the middle term is the dimension of the image
closure; equivalently it is the algebraic-matroid rank of the observed
coordinate set.  Thus the same combinatorial function controls both the
grouped missing-pair rank and the completion geometry.  In particular,
$r_d(S)$ and $\delta_d(S)$ are independent of the total frame length $R$ for
$R\ge\Nd$.

For the local subspaces $L_e$ defined above, Rado's theorem gives the
rank function
\[
\min_{T\subseteq S}
\bigl(|S\setminus T|+\dim\sum_{a\in T}L_{e_a}\bigr).
\]
The centralizer calculation below identifies
$\dim\sum_{a\in T}L_{e_a}$ with $\rho_d(T)$.  The essential realization step
is to prove that genuine FUNTF columns, despite being coupled by a common
moment equation, attain this ambient Rado rank.

The fiber formula has a direct interpretation.  The quantity
$|T|-\rho_d(T)$ measures the excess of missing angular degrees of freedom
over the number of independent tight-frame constraints available on the
subpattern $T$.  Its maximum is exactly the dimension of the generic
completion ambiguity.  In particular, finite generic completion is possible
only when $|S|\le\Nd$.

\begin{corollary}[Finite-completion criterion]\label{cor:finite}
In the setting of Theorem~\ref{thm:main}, the generic completion fiber is
finite if and only if
\[
|T|\le \rho_d(T)
\qquad\text{for every }T\subseteq S.
\]
\end{corollary}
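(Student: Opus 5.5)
The corollary follows directly from the fiber-dimension formula in Theorem~\ref{thm:main}, so the plan is to read off the vanishing of $\delta_d(S)$. Since $X_{d,R}$ is irreducible for $R\ge\Nd$ (by \cite[Theorem~3.3]{BFR2020}, as recalled above), and $\pi_S\colon X_{d,R}\to Y_S$ is a dominant morphism of irreducible varieties, the generic fiber has pure dimension $\dim X_{d,R}-\dim Y_S$. A generic fiber is finite exactly when this dimension is zero. Theorem~\ref{thm:main} identifies it with
\[
\delta_d(S)=\max_{T\subseteq S}\bigl(|T|-\rho_d(T)\bigr).
\]
So the task reduces to showing that $\delta_d(S)=0$ if and only if $|T|\le\rho_d(T)$ for every $T\subseteq S$.

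First, $\delta_d(S)\ge0$ always, because $T=\varnothing$ contributes $|\varnothing|-\rho_d(\varnothing)=0$ to the maximum. Hence $\delta_d(S)=0$ holds precisely when every term satisfies $|T|-\rho_d(T)\le0$, which is the stated condition. Both directions of the equivalence are immediate from this.

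The only point needing care is that ``generic completion fiber'' refers to fibers over a Zariski-open subset of $Y_S$, not of $X_{d,R}$. I would invoke the fiber-dimension theorem: there is a nonempty open $U\subseteq Y_S$ over which every fiber has dimension exactly $\dim X_{d,R}-\dim Y_S$. Theorem~\ref{thm:main} already equates this quantity with $\delta_d(S)$. Finiteness then matches zero-dimensionality, since fibers are closed subvarieties and a zero-dimensional variety is a finite set of points. I expect no real obstacle here; all substantive work lies in Theorem~\ref{thm:main}.
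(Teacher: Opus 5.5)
Your proposal is correct and essentially the paper's own proof: the generic fiber is finite exactly when its dimension $\delta_d(S)=\max_{T\subseteq S}(|T|-\rho_d(T))$ from Theorem~\ref{thm:main} is zero, and since $T=\varnothing$ contributes $0$ to the maximum, this is equivalent to $|T|\le\rho_d(T)$ for every $T\subseteq S$. Your added remark, that the fiber-dimension theorem gives fibers of dimension $\dim X_{d,R}-\dim Y_S$ over a Zariski-open subset of $Y_S$, only makes explicit a step the paper leaves implicit.
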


Finite completion here does not assert uniqueness or specify the number
of completions.  At the critical length $R=\Nd$, if every column is partially observed, the
coordinate projection is square because exactly $\Nd(d-2)=\dim X_{d,\Nd}$
coordinates remain observed \cite[Theorem~3.3]{BFR2020}.

\begin{corollary}[Critical-length basis criterion]\label{cor:squarebasis}
Suppose $R=\Nd$ and every column has exactly two missing coordinates.  Let
$H$ be the associated $\Nd$-edge multigraph on $[d]$.  Then the observed
coordinates form a basis of the algebraic matroid of $X_{d,\Nd}$ if and only if
\[
|B|\le \rho_d(B)
\qquad\text{for every nonempty edge submultiset }B\subseteq E(H).
\]
Equivalently, the observed-coordinate projection is generically finite.
\end{corollary}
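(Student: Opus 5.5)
This corollary should follow from Theorem~\ref{thm:main} and Corollary~\ref{cor:finite} together with a dimension count at the critical length. Take $R=\Nd$ and $S=[\Nd]$, every column partially observed, with labels given by the edges of $H$. Then exactly $\Nd(d-2)$ coordinates are observed. By \cite[Theorem~3.3]{BFR2020}, $X_{d,\Nd}$ is irreducible of dimension $\Nd(d-2)$ (recall $\Nd\ge d+2$ for $d\ge4$). So the observed set has the same cardinality as a basis of the algebraic matroid, and it is a basis exactly when it is independent. Independence means its algebraic-matroid rank equals its cardinality, that is, $\dim\overline{\pi_S(X_{d,\Nd})}=\dim X_{d,\Nd}$.

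By the displayed identity after Theorem~\ref{thm:main}, this rank equals $\dim X_{d,\Nd}-|S|+r_d(S)$. So the observed set is a basis iff $r_d(S)=|S|$. By the fiber-dimension theorem this is the same as $\delta_d(S)=\dim X_{d,\Nd}-\dim Y_S=0$, i.e.\ the generic completion fiber is finite. This already gives the stated equivalence with generic finiteness of the observed-coordinate projection.

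It remains to turn $r_d(S)=|S|$ into the combinatorial condition. Taking $T=\varnothing$ gives $r_d(S)\le|S|$. Equality holds iff $|S\setminus T|+\rho_d(T)\ge|S|$ for every $T$, i.e.\ iff $|T|\le\rho_d(T)$ for all $T\subseteq S$. This is exactly the criterion of Corollary~\ref{cor:finite}. Since $\rho_d(\varnothing)=0$, the case $T=\varnothing$ is automatic, so only nonempty $B=T\subseteq E(H)$ need be checked.

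The only point needing care is the identification of ``basis'' with ``independent of the right size,'' which relies on the irreducibility and exact dimension of $X_{d,\Nd}$ cited above. Everything else is formal bookkeeping once Theorem~\ref{thm:main} is available.
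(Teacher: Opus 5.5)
Your proposal is correct and follows essentially the same argument as the paper. Both use the count $\Nd(d-2)=\dim X_{d,\Nd}$ to make ``basis,'' ``independent,'' and ``generically finite projection'' coincide. Both then read off the inequalities from $\delta_d(S)=0$, equivalently $r_d(S)=|S|$, as in Corollary~\ref{cor:finite}.
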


In dimension four the critical-length criterion collapses to a simple closed
condition.

\begin{corollary}[Four-dimensional specialization]\label{cor:d4intro}
For $X_{4,9}$ with exactly two missing coordinates in each column, the
observed coordinates form an algebraic-matroid basis if and only if the
associated nine-edge multigraph is connected and no vertex pair carries more
than six parallel edges.
\end{corollary}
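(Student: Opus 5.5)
The plan is to apply Corollary~\ref{cor:squarebasis} with $d=4$, so that $\Nd=N_4=\binom{5}{2}-1=9$ and $R=9$. What remains is to evaluate the condition $|B|\le\rho_4(B)$ for every nonempty $B\subseteq E(H)$ as a purely combinatorial statement about nine-edge multigraphs on $[4]$. The first step is to tabulate
\[
\rho_4(B)=\frac{v(B)\,(9-v(B))}{2}-c(B)
\]
over all possible values of $v(B)$ and $c(B)$:
\begin{itemize}[label={}]
\item For a nonempty $B$ we have $v(B)\in\{2,3,4\}$.
\item If $v(B)=2$, the simple support is a single edge, so $c(B)=1$ and $\rho_4(B)=7-1=6$.
\item If $v(B)=3$, three vertices cannot carry two nontrivial components, so $c(B)=1$ and $\rho_4(B)=9-1=8$.
\item If $v(B)=4$, either $c(B)=1$, giving $\rho_4(B)=10-1=9$, or the support is a perfect matching of $[4]$, giving $c(B)=2$ and $\rho_4(B)=10-2=8$.
\end{itemize}

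Next I would check necessity. Take $B=E(H)$, which has nine edges. Since $\rho_4\le 8$ whenever $v\le3$, or whenever $v=4$ and $c=2$, the inequality $9\le\rho_4(E(H))$ forces $v(H)=4$ and $c(H)=1$. That is, the support of $H$ spans all four vertices and is connected. If some vertex pair carries $k\ge7$ parallel edges, let $B$ consist of seven of them. Then $v(B)=2$ and $|B|=7>6=\rho_4(B)$, a violation.

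For sufficiency, assume $H$ is connected on all of $[4]$ and every pair carries at most six parallel edges. For $B=E(H)$ we have $|B|=9=\rho_4(B)$. Any proper nonempty $B\subsetneq E(H)$ has $|B|\le 8$, and by the table $\rho_4(B)\ge 8$ unless $v(B)=2$. When $v(B)=2$, all edges of $B$ are parallel copies of a single pair, so $|B|\le 6=\rho_4(B)$ by the multiplicity hypothesis. Hence every nonempty $B$ satisfies the inequality, and Corollary~\ref{cor:squarebasis} gives the basis property.

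There is no real obstacle here; the only point requiring care is the case analysis of $c(B)$. In particular, one must note that a disconnected spanning support on four vertices must be a perfect matching, and that three vertices cannot carry two nontrivial components. One should also interpret ``connected'' as requiring the support to reach all four vertices, which is forced by the $v(H)=4$ requirement above.
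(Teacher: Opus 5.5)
Your proposal is correct and follows the paper's own proof essentially step for step. Both arguments reduce to Corollary~\ref{cor:squarebasis} at $d=4$, tabulate $\rho_4\in\{6,8,9\}$ according to $(v(B),c(B))$, and note that when $v(B)\ge3$ only $B=E(H)$ can violate the bound. Your split into $B=E(H)$ versus proper $B$ with $|B|\le8$ is just a reorganization of the paper's case analysis.
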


The bound $\Nd$ is sufficient, but is not asserted to be optimal.  Global
dependencies do occur at shorter lengths.  Suppose $d+2\le R\le2d-2$ and every
column has the same missing pair $e$.  Then
\[
\sum_{a=1}^R q_a(W)
=
\left[J_e,\sum_{a=1}^R w_aw_a^T\right]
=
\left[J_e,\frac Rd I_d\right]=0
\]
for every $W\in X_{d,R}$, while the Rado formula would predict rank $R$
because $\rho_d(T)=2d-2$ for every nonempty parallel-edge submultiset
$T$.  Thus the theorem is a genuine high-redundancy stabilization result,
not a formal consequence of the local subspace arrangement at every frame
length.

In particular, the smallest uniform stabilization threshold lies between
$2d-1$ and $\Nd$ (Corollary~\ref{cor:threshold}).

The proof has two stages.  At the critical length $\Nd$, the local subspace
dimension splits into an off-diagonal term and ordinary graphic rank.
Matroid partition gives an off-diagonal basis together with a spanning tree;
a Hall matching, a zero-coordinate tournament, and a moment-submersion
argument realize the off-diagonal core.  A generic residual decomposition
along the tree then recovers the remaining diagonal directions.  This is the
Basis Realization Theorem of Section~\ref{sec:core}.

The second stage transports that realization to every $R\ge\Nd$.  We extend
an arbitrary Rado-independent subfamily to an auxiliary critical basis, use
the spanning-set stability under addition of fully observed columns from
\cite[Remark~4.6]{BFR2020}, and finally retain a nonzero minor involving only
the original labels.  The tangent-kernel identity then converts the stabilized
rank formula into the fiber-dimension formula.

Section~\ref{sec:structure} develops the tangent-space calculation and
the matroid decomposition.  Section~\ref{sec:core} proves the critical
realization theorem, and Section~\ref{sec:rankfiber} derives stabilization,
fiber dimensions, and the local real result.  Section~\ref{sec:consequences}
gives the matroid-union interpretation, the explicit four-dimensional
formula, and bounds on the stabilization threshold.  Appendix~\ref{app:d4}
verifies the tournament base case.

\section{Jacobian ranks and the matroid decomposition}\label{sec:structure}

Except in the explicitly stated general-length tangent-space lemmas,
Sections~\ref{sec:structure} and \ref{sec:core} work at the critical length
\[
R=\Nd=\binom{d+1}{2}-1.
\]

\subsection{The restricted Jacobian and local rank obstruction}\label{sec:jacobian}

For distinct $i,j$, recall $J_{ij}=E_{ij}-E_{ji}$ and set
\[
S_{ij}:=E_{ij}+E_{ji},\qquad D_{ij}:=E_{ii}-E_{jj}.
\]
Thus $S_{ji}=S_{ij}$ and $D_{ji}=-D_{ij}$.
Recall also that $\Symo(d)=\{A\in\Sym(d):\tr A=0\}$.  We use the
direct-sum decomposition
\[
\Symo(d)=\OO_d\oplus\DD_d,
\]
where
\[
\OO_d=\Span\{S_{ij}:1\le i<j\le d\},\qquad
\DD_d=\left\{\operatorname{diag}(x_1,\dots,x_d):\sum_i x_i=0\right\}.
\]
Thus
\[
\dim\OO_d=\binom d2,\qquad \dim\DD_d=d-1.
\]
Let $P_{\OO_d}$ and $P_{\DD_d}$ denote the linear projections associated
with this direct sum.

\begin{lemma}[A regular locus for the frame equations]\label{lem:regular}
Let $d\ge4$ and $R\ge d+2$.  The set
\[
X_{d,R}^{\circ}=\{W\in X_{d,R}:w_a^Tw_b\ne0\text{ for all }a\ne b\}
\]
is a nonempty Zariski-open subset of $X_{d,R}$.  Every point of this set is
smooth, of dimension $R(d-1)-\Nd$, and its tangent space is
\[
T_WX_{d,R}=\left\{(\dot w_a)_{a=1}^R:
 w_a^T\dot w_a=0\ (1\le a\le R),\quad
 \sum_a(w_a\dot w_a^T+\dot w_aw_a^T)=0\right\}.
\]
\end{lemma}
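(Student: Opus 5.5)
The plan is to apply the Jacobian criterion to the defining map
\[
F:\C^{d\times R}\to \Symo(d)\times\C\times\C^{R},\qquad
F(W)=\Bigl(WW^T-\tfrac{\tr(WW^T)}{d}I_d,\ \tr(WW^T)-R,\ (w_a^Tw_a-1)_{a}\Bigr).
\]
Its zero set is exactly $X_{d,R}$, since $\tr(WW^T)=\sum_a w_a^Tw_a=R$ on the zero set of the last block. The last block already forces $\tr(WW^T)=R$, so the trace component is redundant. We therefore use the reduced map $G(W)=(WW^T-\tfrac Rd I_d,\ (w_a^Tw_a-1)_a)$ with target $\Symo(d)\times\C^R$, understood via the trace-free part. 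This has $\Nd+R$ equations in $dR$ unknowns, giving the expected dimension $dR-R-\Nd=R(d-1)-\Nd$.

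The main step is to show that the differential $dG_W(\dot W)=\bigl(\sum_a(w_a\dot w_a^T+\dot w_aw_a^T),\ (2w_a^T\dot w_a)_a\bigr)$ is surjective onto $\Symo(d)\times\C^R$ when $W\in X_{d,R}^\circ$. Equivalently, we show that the cokernel is trivial. Suppose $(B,\lambda)\in\Symo(d)\times\C^R$ annihilates the image under the trace pairing. Then for every $a$ and every $\dot w_a$,
\[
2w_a^TB\dot w_a+2\lambda_a w_a^T\dot w_a=0,
\]
so $Bw_a=-\lambda_a w_a$: every column is an eigenvector of $B$. (Here we must account for the trace-free projection. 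A multiple of $I$ in the annihilator is absorbed by the relation $\sum_a w_a^T\dot w_a$, so we may work with $B\in\Sym(d)$ modulo $I$.)

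Now use $w_a^Tw_b\neq0$ for $a\ne b$. For a symmetric $B$, eigenvectors with distinct eigenvalues are bilinearly orthogonal: $\mu_a w_a^Tw_b=w_a^TBw_b=\mu_bw_a^Tw_b$. Hence all $\lambda_a$ are equal, say to $-\mu$. Since $WW^T=\tfrac Rd I$ is invertible, the columns span $\C^d$, and so $B=\mu I$. Then $B=0$ in $\Symo(d)$, and the relation $\sum_a \lambda_a w_a^T\dot w_a$ together with the trace bookkeeping forces $\lambda=0$. So $dG_W$ is surjective. This is the step where the hypothesis defining $X^\circ_{d,R}$ is used, and I expect the bookkeeping between the trace part and the norm equations to be the only delicate point.

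By the implicit function theorem, $G^{-1}(0)$ is then a smooth complex manifold of dimension $R(d-1)-\Nd$ near $W$, with tangent space $\ker dG_W$, which is the displayed space. Since $X_{d,R}$ is irreducible for $R\ge d+2>4$ by \cite[Theorem~3.3]{BFR2020}, and its dimension is $R(d-1)-\Nd$ there, $W$ is a smooth point of the variety, and the kernel is the Zariski tangent space. (Generic reducedness follows because the Jacobian has full rank.) Openness of $X^\circ_{d,R}$ is clear, since it is cut out by the nonvanishing of finitely many polynomials.

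For nonemptiness, it suffices to exhibit one $W\in X_{d,R}$ with all $w_a^Tw_b\ne0$. By irreducibility, each closed condition $w_a^Tw_b=0$ is a proper subvariety as soon as it fails at one point, and a finite union of proper closed subsets cannot cover an irreducible variety. For a single pair we can take a real FUNTF and apply a generic orthogonal rotation to a subset of its columns within a real frame family. More simply, we use the existence of real FUNTFs in which no two vectors are orthogonal, for instance harmonic-type frames perturbed along the smooth real locus. I expect this existence check to be routine but to require a concrete family.
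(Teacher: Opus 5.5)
\textbf{Gap: nonemptiness of $X_{d,R}^{\circ}$ is never proved.} Your smoothness argument is correct and is essentially the paper's. The paper restricts to the product of smooth quadrics $\mathcal Q_d^R$ and shows that the moment map $W\mapsto\sum_a w_aw_a^T-\frac Rd I_d$ into $\Symo(d)$ is submersive. That is the same eigenvector argument (symmetry plus $w_a^Tw_b\ne0$ plus spanning columns) as your Lagrange-multiplier version. Your handling of the redundant trace equation and your passage from the implicit function theorem to smoothness of the reduced variety are both fine.

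You correctly reduce nonemptiness, via irreducibility, to showing that each single function $w_a^Tw_b$ is not identically zero on $X_{d,R}$. But you leave that step undone, and neither suggestion closes it:
\begin{itemize}
\item \emph{Rotating a subset $A$ of columns by an orthogonal $U$.} This changes the frame operator to $UF_AU^T+F_{A^c}$, where $F_A=\sum_{c\in A}w_cw_c^T$. Tightness survives only if $U$ commutes with $F_A$. Nothing guarantees a subset containing $a$ but not $b$ that admits such a $U$ with $w_a^TU^Tw_b\ne0$.
\item \emph{Perturbing a harmonic frame along the smooth real locus.} This is circular. It presupposes that $w_a^Tw_b$ does not vanish identically near that frame, which is exactly the claim. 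Your smoothness criterion also covers only points where all pairwise products are already nonzero, so it does not apply at a harmonic frame that has orthogonal pairs.
\end{itemize}

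The missing idea is a one-line identity valid at every point of $X_{d,R}$:
\[
\sum_{b}(w_a^Tw_b)^2=w_a^T(WW^T)w_a=\tfrac Rd,
\qquad\text{hence}\qquad
\sum_{b\ne a}(w_a^Tw_b)^2=\tfrac Rd-1\ne0,
\]
since $R>d$. The paper phrases this through the Gram matrix $K=W^TW$, which satisfies $K^2=\frac Rd K$ and $K_{aa}=1$. So at any frame, every column has a non-orthogonal partner. Because $X_{d,R}$ is invariant under column permutations, every specified $w_a^Tw_b$ is then a nonzero regular function on $X_{d,R}$. Irreducibility gives a common point where all of them are nonzero. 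With this in place of your last paragraph, the proof is complete.
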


\begin{proof}
The Gram matrix $K=W^TW$ satisfies $K^2=(R/d)K$ and $K_{aa}=1$.
Thus
\[
\sum_{b\ne a}(w_a^Tw_b)^2=\frac Rd-1\ne0.
\]
Some off-diagonal Gram entry is nonzero at some frame, and column
permutations show that each specified off-diagonal entry is a nonzero
regular function on $X_{d,R}$.  Irreducibility
\cite[Theorem~3.3]{BFR2020} now implies that their nonvanishing loci have
nonempty intersection.

Let $\mathcal Q_d=\{w\in\C^d:w^Tw=1\}$, a smooth quadric, and consider
\[
M:\mathcal Q_d^R\longrightarrow\Symo(d),\qquad
M(W)=\sum_a w_aw_a^T-\frac Rd I_d.
\]
A trace-free symmetric matrix $B$ annihilates $DM_W$ under the trace
pairing precisely when $Bw_a=\lambda_aw_a$ for every $a$.  Symmetry gives
$(\lambda_a-\lambda_b)w_a^Tw_b=0$.  At a point of $X_{d,R}^{\circ}$ all
$\lambda_a$ coincide.  Since $WW^T$ is invertible, the columns span
$\C^d$, so $B$ is scalar; its trace is zero, hence $B=0$.
Consequently $DM_W$ is surjective.  Its zero fiber is smooth of codimension
$\Nd$ in $\mathcal Q_d^R$, and its tangent space is the displayed kernel.
\end{proof}

\begin{lemma}[Tangent kernel for missing pairs]\label{lem:tangent}
Let $d\ge4$, $R\ge d+2$, and let $S\subseteq[R]$ carry prescribed
missing-pair labels.  If $W\in X_{d,R}^{\circ}$ and
$(w_{a,i},w_{a,j})\ne(0,0)$ whenever $e_a=\{i,j\}$, then
\[
\ker D\pi_S|_W\simeq
\left\{(t_a)_{a\in S}:\sum_{a\in S}t_aq_a(W)=0\right\}.
\]
These hypotheses hold on a nonempty Zariski-open subset of $X_{d,R}$.
\end{lemma}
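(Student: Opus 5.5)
Using Lemma~\ref{lem:regular}, the tangent space $T_WX_{d,R}$ at $W\in X_{d,R}^{\circ}$ is cut out by the linear equations $w_a^T\dot w_a=0$ and $\sum_a(w_a\dot w_a^T+\dot w_aw_a^T)=0$. Since $\pi_S$ is a coordinate projection, $\ker D\pi_S|_W$ consists of tangent vectors with $\dot w_a=0$ for $a\notin S$ and with $\dot w_a$ supported on the two coordinates $e_a=\{i,j\}$ for $a\in S$. The plan is to parametrize these vectors by one scalar per selected column and then read off the moment equation.

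\textbf{Step 1: the unit-norm equation.} Fix $a\in S$ with $e_a=\{i,j\}$, and write $\dot w_a=x\,e_i+y\,e_j$. Then $w_a^T\dot w_a=w_{a,i}x+w_{a,j}y=0$. Because $(w_{a,i},w_{a,j})\ne(0,0)$, the solution space is exactly one-dimensional. It is spanned by $J_{ij}w_a$, since
\[
J_{ij}w_a=(E_{ij}-E_{ji})w_a=w_{a,j}e_i-w_{a,i}e_j,
\]
which is nonzero and satisfies $w_a^TJ_{ij}w_a=0$ by skew-symmetry. Hence $\dot w_a=t_aJ_{e_a}w_a$ for a unique scalar $t_a$. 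The sign convention for the unordered edge (using $J_e=J_{ij}$ with $i<j$) only rescales $t_a$, so it does not matter.

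\textbf{Step 2: the moment equation.} For such a vector,
\[
w_a\dot w_a^T+\dot w_aw_a^T=t_a\bigl(w_aw_a^TJ^T+Jw_aw_a^T\bigr)=t_a\bigl(Jw_aw_a^T-w_aw_a^TJ\bigr)=t_a[J_{e_a},w_aw_a^T]=t_aq_a(W),
\]
using $J^T=-J$. So the moment condition becomes $\sum_{a\in S}t_aq_a(W)=0$. The map $(t_a)\mapsto(t_aJ_{e_a}w_a)_a$ (zero off $S$) is linear and injective, because each $J_{e_a}w_a\ne0$. Its image is exactly $\ker D\pi_S|_W$, which gives the claimed isomorphism.

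\textbf{Step 3: openness.} The set $X_{d,R}^{\circ}$ is nonempty Zariski-open by Lemma~\ref{lem:regular}. For each $a\in S$, the condition $(w_{a,i},w_{a,j})\ne(0,0)$ is open; it is nonempty on $X_{d,R}$ because the coordinate function $w_{a,i}$ does not vanish identically. To see this, act by a signed coordinate permutation matrix $P$ (orthogonal, so $PW$ stays in $X_{d,R}$); this moves any nonzero entry of $w_a$ into row $i$, and $w_a\ne0$ since $w_a^Tw_a=1$. Irreducibility \cite[Theorem~3.3]{BFR2020} then makes the finite intersection of these nonempty open sets nonempty.

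The only point needing care is Step 1's one-dimensionality, which rests on the nonvanishing hypothesis; the rest is direct computation.
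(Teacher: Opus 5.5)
Your proof is correct and follows the paper's argument essentially step for step: the linearized norm equation on the missing coordinate plane has one-dimensional kernel spanned by $J_{e_a}w_a$, the moment equation then becomes $\sum_{a\in S}t_aq_a(W)=0$, and openness follows from the invariance of $X_{d,R}$ under orthogonal row changes together with irreducibility. Where the paper uses a general complex orthogonal map sending $w_a$ to a coordinate vector, you use a signed coordinate permutation; this is a slightly more elementary way to see that $w_{a,i}$ is not identically zero, but it is the same idea.
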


\begin{proof}
A tangent vector in the kernel is supported on the missing pairs.  The
linearized norm equation in column $a$ has one-dimensional kernel on its
missing coordinate plane, generated by $J_{e_a}w_a$.  Thus
$\dot w_a=t_aJ_{e_a}w_a$ for $a\in S$ and $\dot w_a=0$ otherwise.
Since $J_{e_a}^T=-J_{e_a}$,
\[
\dot w_aw_a^T+w_a\dot w_a^T
=t_a[J_{e_a},w_aw_a^T]=t_aq_a(W).
\]
Lemma~\ref{lem:regular} gives both directions of the asserted equivalence.

For each fixed column and pair, the condition that both entries vanish
is a proper closed subset: a complex orthogonal change of row coordinates
can send that unit column to either coordinate vector of the pair.
Intersecting the complements of these finitely many subsets with
$X_{d,R}^{\circ}$ proves the last assertion.
\end{proof}

Recall that
\[
L_{ij}=\im\bigl(\operatorname{ad}_{J_{ij}}:\Sym(d)\to\Symo(d)\bigr)
=\{[J_{ij},X]:X\in\Sym(d)\}.
\]
The subspaces $L_{ij}$ therefore give a universal upper bound on the rank
of every restricted family of vectors $q_a$.  The main issue of this paper
is whether the common FUNTF moment constraint creates additional generic
dependencies beyond those forced by the subspaces $L_{ij}$.  Section~\ref{sec:rankfiber}
shows that it does not.

For a nonempty simple support graph $G$ on $[d]$, let $V(G)$ be its set of
nonisolated vertices, let $v=|V(G)|$, and let $c$ be its number of nontrivial
connected components.  Put
\[
L_G=\sum_{ij\in E(G)}L_{ij}\subseteq\Symo(d).
\]

\begin{lemma}[Centralizer rank formula]\label{lem:centralizer}
One has
\[
\dim L_G=\frac{v(2d-v+1)}2-c.
\]
Consequently, if a submultiset $B$ of missing-edge copies has support $G$, then
any family $\{q_a:a\in B\}$ is linearly dependent whenever
$|B|>\rho_d(B)$.
\end{lemma}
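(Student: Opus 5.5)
The plan is to compute $\dim L_G$ through its annihilator under the trace pairing $\langle A,B\rangle=\tr(AB)$ on $\Sym(d)$. This pairing is nondegenerate, so $\dim L_G=\binom{d+1}{2}-\dim L_G^{\perp}$, where the orthogonal complement is taken inside $\Sym(d)$. Writing $J=J_{ij}$, the identity $\tr(B[J,X])=\tr([B,J]X)$ reduces the question to a commutator condition. For $B$ symmetric and $J$ skew, the matrix $[B,J]$ is again symmetric, since $[B,J]^T=J^TB-BJ^T=BJ-JB$. Hence $B\perp L_{ij}$ if and only if $[B,J_{ij}]=0$. It follows that
\[
L_G^{\perp}=C_G:=\{B\in\Sym(d):[B,J_{ij}]=0\text{ for all }ij\in E(G)\},
\]
and the formula reduces to the claim $\dim C_G=c+\binom{d-v+1}{2}$. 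The arithmetic identity $d(d+1)-(d-v)(d-v+1)=v(2d-v+1)$ then finishes the computation. As a side remark, $I_d\in C_G$, which is consistent with $L_G\subseteq\Symo(d)$.

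The central step is describing the centralizer of a single $J_{ij}$ in $\Sym(d)$. The operator $J_{ij}$ has kernel $\Span\{e_l:l\ne i,j\}$ and image $\Span\{e_i,e_j\}$, and it acts on the image as a rotation generator. Any $B$ commuting with $J_{ij}$ preserves both of these subspaces. On the plane $\Span\{e_i,e_j\}$, the restriction of $B$ commutes with a $2\times2$ rotation generator, so it has the form $\alpha I+\beta J$. Symmetry forces $\beta=0$. Therefore $B$ is the scalar $\alpha$ on the plane, has no entries coupling $\{i,j\}$ to the other indices, and is an arbitrary symmetric matrix on the complementary indices. Conversely, every such $B$ commutes with $J_{ij}$.

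Next I would intersect these conditions over all edges of $G$. Each nonisolated vertex $i$ lies in some edge $ij$. That edge kills every entry $B_{il}$ with $l\notin\{i,j\}$, and it also kills $B_{ij}$ and forces $B_{ii}=B_{jj}$. So every off-diagonal entry in a row indexed by $V(G)$ vanishes, and the diagonal entries agree along every edge. By connectivity, $B$ is then a single scalar on each nontrivial component, scalars on distinct components are independent, and $B$ is an arbitrary symmetric matrix on the $d-v$ isolated indices. Conversely, every matrix of this shape satisfies each single-edge condition. This gives $C_G\cong\C^{c}\oplus\Sym(d-v)$, and hence the stated dimension. The only place needing care is checking that both directions of the single-edge description hold and that they intersect cleanly; I expect no real obstacle there.

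For the consequence, note first that $q_a(W)=[J_{e_a},w_aw_a^T]\in L_{e_a}$ because $w_aw_a^T\in\Sym(d)$. So for $B$ with support $G$, all $q_a$ with $a\in B$ lie in $L_G$. The vertex and component counts of $B$ are those of $G$, so $\dim L_G=\rho_d(B)$. Any family of more than $\rho_d(B)$ vectors in a space of dimension $\rho_d(B)$ is linearly dependent.
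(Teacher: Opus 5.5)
Your proof is correct. Its skeleton is the paper's: the trace pairing identifies $L_G^\perp$ with the symmetric centralizer of $\{J_{ij}:ij\in E(G)\}$, and one then shows this centralizer has dimension $c+\binom{d-v+1}{2}$.

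The difference is in how you compute that centralizer. The paper argues Lie-theoretically. The identity $[J_{ij},J_{jk}]=J_{ik}$ shows that the edge rotations of a component $C$ generate $\mathfrak{so}(C)$. A symmetric matrix commuting with $\mathfrak{so}(C)$ is then taken to be scalar on $C$. The cross blocks are killed separately, because $J_{uv}b=0$ for all $u,v\in C$ forces $b$ to vanish on $C$.

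You instead determine the centralizer of a single $J_{ij}$ exactly: block-diagonal with respect to $\{i,j\}$ and its complement, and scalar on the $2\times2$ block. You then intersect over edges, so connectivity enters only through propagating $B_{ii}=B_{jj}$ along paths.

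Your route is more elementary and fully self-contained. It needs neither the bracket identity nor the fact about the commutant of $\mathfrak{so}(C)$. The paper does not prove that fact, and for $|C|=2$ it genuinely depends on symmetry, since the commutant of $\mathfrak{so}(2,\C)$ in all matrices is two-dimensional. Your argument also handles the cross blocks and the reverse inclusion in the same step. The paper's route is more conceptual, making visible that only the Lie algebra generated by the edge rotations matters.

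A minor difference is that you pair on all of $\Sym(d)$. The paper pairs on $\Symo(d)$ and must subtract one for the trace-zero hyperplane, implicitly using $I_d\in C_G$ and nondegeneracy of the pairing on $\Symo(d)$. Your choice avoids that bookkeeping. The deduction of the dependence statement is the same in both.
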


\begin{proof}
Use the trace pairing on $\Symo(d)$.  A matrix $S\in\Symo(d)$ is orthogonal
to $L_G$ if and only if
\[
\langle S,[J_{ij},X]\rangle=0
\qquad\text{for every }ij\in E(G),\ X\in\Sym(d),
\]
which is equivalent to
\[
[S,J_{ij}]=0
\qquad(ij\in E(G)).
\]
On each nontrivial connected component $C$ of $G$, the rotations
$\{J_{ij}:ij\in E(C)\}$ generate $\mathfrak{so}(C)$.  Indeed, for pairwise
distinct $i,j,k$ one has the ordered-index identity
\[
[J_{ij},J_{jk}]=J_{ik},
\]
and repeated commutators along a path generate $J_{uv}$ for every pair of
vertices $u,v\in C$.  A symmetric matrix commuting with all of
$\mathfrak{so}(C)$ is scalar on the coordinate subspace of $C$.  If $B$ is
a block joining $C$ to a different component or to an isolated coordinate,
commutation gives $J_{uv}B=0$ for every $u,v\in C$.  If $b$ is any column
of $B$, then $J_{uv}b=0$ forces the $u$- and $v$-coordinates of $b$ to
vanish; varying $u,v\in C$ forces every coordinate of $b$ on $C$ to be
zero.  Hence $B=0$.  Thus every
such cross block vanishes.  On the $d-v$ isolated
coordinates there is no restriction, so the symmetric centralizer in
$\Sym(d)$ has dimension
\[
c+\binom{d-v+1}{2}.
\]
Intersecting with the trace-zero hyperplane lowers this by one.  Therefore
\begin{align*}
\dim L_G
&=\dim\Symo(d)-\dim L_G^\perp\\
&=\left(\binom{d+1}{2}-1\right)
 -\left(c+\binom{d-v+1}{2}-1\right)\\
&=\frac{v(2d-v+1)}2-c.
\end{align*}
Every $q_a$ whose edge label belongs to $G$ lies in $L_G$, which gives
the stated necessary inequalities for edge submultisets.
\end{proof}
\subsection{The matroid structure of admissible patterns}\label{sec:matroid}

At the critical length, call an $\Nd$-edge multigraph \emph{admissible} if
$|B|\le\rho_d(B)$ for every nonempty edge submultiset $B$.
For an edge $e=\{i,j\}$, recall $L_e=L_{ij}$ and let
$U_e:=P_{\OO_d}(L_e)$.

\begin{lemma}[Off-diagonal edge space]\label{lem:Ue}
For $e=\{i,j\}$,
\[
U_e=\Span\left(S_{ij},\{S_{ik}:k\ne i,j\},\{S_{jk}:k\ne i,j\}\right).
\]
Hence, for an edge set $B$ incident with exactly $v=v(B)$ vertices,
\[
\dim\sum_{e\in B}U_e
=\sigma_d(B):=\binom d2-\binom{d-v}{2}
=\frac{v(2d-v-1)}2.
\]
\end{lemma}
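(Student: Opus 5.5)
The plan is to compute $L_{ij}$ explicitly as the image of $\operatorname{ad}_{J_{ij}}$ on the standard basis of $\Sym(d)$, and then project onto $\OO_d$. For a fixed edge $e=\{i,j\}$ and $k,l\notin\{i,j\}$, we use the relations
\[
J_{ij}E_{kl}=\delta_{jk}E_{il}-\delta_{ik}E_{jl},\qquad E_{kl}J_{ij}=\delta_{li}E_{kj}-\delta_{lj}E_{ki}.
\]
The work is a short case analysis on how a basis element meets $\{i,j\}$:
\begin{itemize}
\item If $X=E_{kk}$ or $X=S_{kl}$, then $[J_{ij},X]=0$.
\item If $X=S_{ik}$ with $k\ne i,j$, then $[J_{ij},S_{ik}]=-S_{jk}$ (up to sign). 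Similarly $[J_{ij},S_{jk}]=\pm S_{ik}$.
\item If $X=E_{ii}$ or $X=E_{jj}$, then $[J_{ij},X]=\mp S_{ij}$.
\item If $X=S_{ij}$, then $[J_{ij},S_{ij}]=2D_{ij}$.
\end{itemize}

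It follows that
\[
L_{ij}=\Span\bigl(S_{ij},\,D_{ij},\,\{S_{ik},S_{jk}:k\ne i,j\}\bigr),
\]
which has dimension $2d-2$. This is consistent with Lemma~\ref{lem:centralizer} for $v=2$, $c=1$: $(2(2d-1))/2-1=2d-2$. Applying $P_{\OO_d}$ kills $D_{ij}\in\DD_d$ and fixes every $S_{\ast}$. Hence $U_e$ is the span of $S_{ij}$, $S_{ik}$ and $S_{jk}$ for $k\ne i,j$, which is the first claim.

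For the dimension formula, $\sum_{e\in B}U_e$ is the span of a set of standard basis vectors $S_{kl}$ of $\OO_d$. It therefore suffices to identify which pairs $\{k,l\}$ occur. By the first claim, $S_{kl}$ occurs exactly when $\{k,l\}$ meets some edge $e\in B$, i.e.\ when at least one of $k,l$ lies in the vertex set $V(B)$. Conversely, every pair with one endpoint $u\in V(B)$ is covered: choose an edge $e\ni u$, and then $S_{ul}\in U_e$ whether or not $l$ is the other endpoint of $e$.

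So the spanned basis vectors are all pairs except those lying entirely inside the $d-v$ non-incident vertices. This gives
\[
\dim\sum_{e\in B}U_e=\binom d2-\binom{d-v}2.
\]
The closed form follows from $\binom d2-\binom{d-v}2=\tfrac{v(2d-v-1)}2$, which is elementary algebra.

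The only step needing care is the sign and index bookkeeping in the commutator cases, particularly confirming that $[J_{ij},E_{ii}]$ is a nonzero multiple of $S_{ij}$ so that $S_{ij}$ genuinely lies in $L_{ij}$. Even this is unnecessary for $U_e$, since $[J_{ij},S_{ik}]$ already gives the $S_{jk}$ and $S_{ik}$ directions. I expect no real obstacle.
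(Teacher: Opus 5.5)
Your proof is correct and follows the paper's route: compute the basic commutators $[J_{ij},\cdot\,]$, note that no $S_{kl}$ with $k,l\notin\{i,j\}$ occurs (your basis computation of $L_{ij}$ makes the paper's ``inspection'' step explicit), and count the pairs meeting $V(B)$. Your closing aside is wrong, though: the computation $[J_{ij},E_{ii}]=-S_{ij}$ (or its $E_{jj}$ analogue) is essential, because it is the only source of the $S_{ij}$ direction in $U_e$ (the commutators with $S_{ik},S_{jk}$ give only $S_{jk},S_{ik}$), and without it the dimension count already fails for a single edge.
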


\begin{proof}
For $k\notin\{i,j\}$, the off-diagonal parts of the basic commutators include
\[
P_{\OO_d}[J_{ij},E_{ii}]=-S_{ij},\qquad
P_{\OO_d}[J_{ij},S_{ik}]=-S_{jk},\qquad
P_{\OO_d}[J_{ij},S_{jk}]=S_{ik}.
\]
Conversely, inspection of the matrix product shows that no off-diagonal
coordinate whose two endpoints both lie outside $\{i,j\}$ can occur.  Hence
$U_e$ is exactly the displayed coordinate span.  For an active vertex set
$V$ of size $v$, the sum of the $U_e$ is therefore the span of all $S_{ab}$
with $\{a,b\}\cap V\ne\varnothing$. Counting gives the formula.
\end{proof}

Let $r_{\mathrm{gr}}(B)=v(B)-c(B)$ be the graphic-matroid rank.

\begin{corollary}[Rank splitting]\label{cor:split}
For every nonempty edge set $B$,
\[
\rho_d(B)=\sigma_d(B)+r_{\mathrm{gr}}(B).
\]
\end{corollary}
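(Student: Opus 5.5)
The identity is pure arithmetic once the two earlier formulas are in hand. Let $B$ be a nonempty edge set with $v=v(B)$ incident vertices and $c=c(B)$ nontrivial components of its simple support. By definition,
\[
\rho_d(B)=\frac{v(2d-v+1)}2-c .
\]
Lemma~\ref{lem:Ue} gives
\[
\sigma_d(B)=\frac{v(2d-v-1)}2 ,
\]
and by definition $r_{\mathrm{gr}}(B)=v-c$. I will therefore compute $\rho_d(B)-\sigma_d(B)$ and show it equals $v-c$.

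The two quadratic expressions differ only in the constant $\pm1$ inside the parentheses:
\[
\frac{v(2d-v+1)}2-\frac{v(2d-v-1)}2=\frac{v\cdot 2}{2}=v .
\]
Hence
\[
\rho_d(B)-\sigma_d(B)=v-c=r_{\mathrm{gr}}(B).
\]

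Two routine points need checking. First, the quantities $v$ and $c$ in $\rho_d$ and in $r_{\mathrm{gr}}$ must refer to the same data. Parallel copies do not change either number, so both are computed from the simple support of $B$. Isolated vertices are excluded from both counts, so they contribute nothing. Second, the identity $\binom d2-\binom{d-v}2=v(2d-v-1)/2$ used in Lemma~\ref{lem:Ue} is a one-line expansion.

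There is no real obstacle. The only point worth remarking on is the interpretation. Together with Lemma~\ref{lem:centralizer}, the splitting reads
\[
\dim L_G=\dim P_{\OO_d}(L_G)+r_{\mathrm{gr}} .
\]
One can also check this structurally: the kernel of $P_{\OO_d}$ restricted to $L_G$ consists of diagonal matrices, and its dimension should be the graphic rank. That structural reading is what the later matroid-partition argument uses. For the corollary itself, however, the arithmetic verification above suffices.
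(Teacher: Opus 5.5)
Your proof is correct and matches the paper. The paper gives no separate proof: it treats the corollary as immediate from the closed form $\sigma_d(B)=v(2d-v-1)/2$ in Lemma~\ref{lem:Ue} and the definitions $\rho_d(B)=v(2d-v+1)/2-c$ and $r_{\mathrm{gr}}(B)=v-c$, and your subtraction yielding $v-c$ is exactly that verification. Your structural reading (that $L_G\cap\DD_d$ has dimension $r_{\mathrm{gr}}$) agrees with the paper's interpretive comment after the corollary, though Lemma~\ref{lem:partition} itself uses only the numerical identity.
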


Conceptually, the off-diagonal projection records which symmetric
coordinates $S_{ij}$ can be touched by each missing edge and therefore forms
a coordinate-subspace Rado arrangement.  The complementary diagonal
directions are the incidence directions $D_{ij}$, whose independence is
graphic.  The identity above is the dimension-level shadow of this
Rado--graphic decomposition.

Let $M_{\off}$ be the Rado matroid induced by the family $\{U_e\}$, and let
$r_{\off}$ denote its rank function.  We use standard matroid terminology as
in \cite{Oxley,Schrijver}.  For notational
convenience set $\rho_d(\varnothing)=\sigma_d(\varnothing)=
r_{\mathrm{gr}}(\varnothing)=0$.

\begin{lemma}[Off-diagonal--graphic partition]\label{lem:partition}
Let $H$ be an $\Nd$-edge multigraph on $[d]$ and assume
$|B|\le\rho_d(B)$ for every nonempty edge submultiset
$B\subseteq E(H)$. Then
\[
E(H)=A\sqcup T_{\mathrm{gr}},
\]
where $|A|=\binom d2$, $|T_{\mathrm{gr}}|=d-1$, $A$ is a basis of
$M_{\off}$, and $T_{\mathrm{gr}}$ is a spanning tree.
\end{lemma}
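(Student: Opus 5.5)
The plan is to obtain the partition from the matroid union theorem (Nash-Williams--Edmonds), applied to $M_{\off}$ and the graphic matroid $M_{\mathrm{gr}}$ of $H$ on the common ground set $E(H)$. The rank of $M_{\off}\vee M_{\mathrm{gr}}$ is
\[
\min_{B\subseteq E(H)}\bigl(|E(H)\setminus B|+r_{\off}(B)+r_{\mathrm{gr}}(B)\bigr).
\]
We will show that this minimum equals $|E(H)|=\Nd$. Then all of $E(H)$ is independent in the union, so $E(H)=I_1\sqcup I_2$ with $I_1$ independent in $M_{\off}$ and $I_2$ a forest.

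For the lower bound, fix $B$ and use Rado's theorem for the arrangement $\{U_e\}$. Together with Lemma~\ref{lem:Ue}, it gives
\[
r_{\off}(B)=\min_{C\subseteq B}\bigl(|B\setminus C|+\sigma_d(C)\bigr).
\]
Choose a minimizing $C\subseteq B$. Graphic rank is monotone, so $r_{\mathrm{gr}}(B)\ge r_{\mathrm{gr}}(C)$, and Corollary~\ref{cor:split} gives $\sigma_d(C)+r_{\mathrm{gr}}(C)=\rho_d(C)$. Hence
\[
|E(H)\setminus B|+r_{\off}(B)+r_{\mathrm{gr}}(B)\ \ge\ |E(H)\setminus C|+\rho_d(C)\ \ge\ |E(H)\setminus C|+|C|=\Nd .
\]
The last inequality is the admissibility hypothesis when $C\neq\varnothing$, and it is trivial when $C=\varnothing$. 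Taking $B=\varnothing$ shows that the minimum is exactly $\Nd$.

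It remains to count. Every $U_e$ lies in $\OO_d$, so $|I_1|\le r_{\off}(E(H))\le\dim\OO_d=\binom d2$. A forest on $[d]$ has $|I_2|\le d-1$. Since $|I_1|+|I_2|=\Nd=\binom d2+d-1$, both inequalities are equalities. Thus $A:=I_1$ is an independent set of full rank $\binom d2$, hence a basis of $M_{\off}$. Likewise $T_{\mathrm{gr}}:=I_2$ is a forest with $d-1$ edges on $d$ vertices, hence a spanning tree.

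The only point needing care is the Rado rank formula for $M_{\off}$. It is valid for an arbitrary family of subspaces, and here $\dim\sum_{e\in C}U_e=\sigma_d(C)$ comes directly from Lemma~\ref{lem:Ue}. Parallel edges cause no trouble: they are separate ground elements in both matroids, and they count toward $v(C)$ and $c(C)$ only through their simple support, consistent with the definitions.
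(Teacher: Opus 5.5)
Your proof is correct and follows essentially the same route as the paper. The paper verifies Edmonds' partition criterion $|X|\le r_{\off}(X)+r_{\mathrm{gr}}(X)$ using the Rado formula, monotonicity of graphic rank, the splitting $\rho_d=\sigma_d+r_{\mathrm{gr}}$, and admissibility, then concludes by the same cardinality count; you phrase the partition step through the equivalent Nash-Williams union rank formula, which is only a cosmetic difference.
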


\begin{proof}
The Rado rank formula~\cite{Rado1942,McDiarmid1975} gives
\[
r_{\off}(X)=\min_{Y\subseteq X}\bigl(|X\setminus Y|+\sigma_d(Y)\bigr).
\]
Hence
\begin{align*}
r_{\off}(X)+r_{\mathrm{gr}}(X)
&\ge \min_{Y\subseteq X}\bigl(|X\setminus Y|+\sigma_d(Y)+r_{\mathrm{gr}}(Y)\bigr)\\
&=\min_{Y\subseteq X}\bigl(|X\setminus Y|+\rho_d(Y)\bigr)\\
&\ge |X|.
\end{align*}
Thus $|X|\le r_{\off}(X)+r_{\mathrm{gr}}(X)$ for every $X\subseteq E(H)$.
The two-matroid partition criterion~\cite{Edmonds1965,EdmondsFulkerson1965}
therefore partitions $E(H)$ into a set independent in $M_{\off}$ and a set
independent in the graphic matroid. Since
\[
|E(H)|=\Nd=\binom d2+(d-1)
\]
is the sum of the two ambient ranks, both sets have maximum possible
cardinality. The second is therefore a spanning tree.
\end{proof}

Whenever an admissible multigraph $H$ is fixed in the remainder of the
critical-length argument, we choose and retain one partition
$E(H)=A\sqcup T_{\mathrm{gr}}$ furnished by
Lemma~\ref{lem:partition}.

\paragraph{Coordinate matching and zero-coordinate branches.}
Let $K_d$ denote the complete graph on the vertex set $[d]$.

\begin{lemma}[Coordinate matching]\label{lem:matching}
Let $A$ be a basis of $M_{\off}$. Then there is a bijection
\[
\phi:A\to E(K_d)
\]
such that $S_{\phi(a)}\in U_{e_a}$ for every $a\in A$. Equivalently,
$\phi(a)\cap e_a\ne\varnothing$.
\end{lemma}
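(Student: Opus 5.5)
The plan is to read the statement as a standard fact about Rado (transversal-type) matroids of coordinate subspaces. By Lemma~\ref{lem:Ue}, each $U_{e}$ is spanned by a subset of the coordinate basis $\{S_{uv}\}$ of $\OO_d$, namely $\{S_{uv}: \{u,v\}\cap e\ne\varnothing\}$. For coordinate subspaces, the Rado matroid on $\{U_{e_a}\}$ coincides with the transversal matroid of the set system $\mathcal F_a:=\{\{u,v\}\in E(K_d):\{u,v\}\cap e_a\ne\varnothing\}$, $a\in A$. Once this is established, a basis of $M_{\off}$ of full size $\binom d2$ is exactly a set admitting a system of distinct representatives into $E(K_d)$. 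Since $|A|=\binom d2=|E(K_d)|$, that system of distinct representatives is a bijection $\phi$.

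The argument runs in three steps.

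First, I check that $\phi(a)\cap e_a\ne\varnothing$ is equivalent to $S_{\phi(a)}\in U_{e_a}$. This follows directly from the coordinate description in Lemma~\ref{lem:Ue}: $S_{uv}$ lies in $U_{e_a}$ exactly when at least one of $u,v$ lies in $e_a$, since both the case $\{u,v\}=e_a$ and the case of a single shared endpoint are listed.

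Second, I produce the matching using Hall's theorem. Since $A$ is independent in $M_{\off}$, we have $r_{\off}(X)=|X|$ for every $X\subseteq A$. The Rado rank formula quoted in the proof of Lemma~\ref{lem:partition} then gives
\[
|X|=r_{\off}(X)\le |X\setminus Y|+\sigma_d(Y)\quad\text{for all }Y\subseteq X,
\]
and taking $Y=X$ yields $|X|\le\sigma_d(X)$. On the other hand, the neighbourhood $N(X)=\bigcup_{a\in X}\mathcal F_a$ is the set of pairs meeting the vertex set $V(X)$. Its cardinality is $\binom d2-\binom{d-v(X)}2=\sigma_d(X)$, which agrees with the dimension count in Lemma~\ref{lem:Ue}. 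So Hall's condition $|N(X)|\ge|X|$ holds for every $X\subseteq A$, and Hall's marriage theorem provides an injection $\phi:A\to E(K_d)$ with $\phi(a)\in\mathcal F_a$.

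Third, since $|A|=\binom d2$ by Lemma~\ref{lem:partition} (or because $A$ is a basis of full rank $\dim\OO_d$, using $\sigma_d$ of the full vertex set), the injection is a bijection.

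I expect the only real obstacle to be making sure Hall's condition is read off correctly from independence. Specifically, I need $|N(X)|=\sigma_d(X)$ on the nose, which is exactly the span computation in Lemma~\ref{lem:Ue}. I also need to apply the Rado inequality in the direction independence $\Rightarrow|X|\le\sigma_d(X)$, which needs only the trivial bound $\dim\sum_{a\in X}U_{e_a}\ge r_{\off}(X)$ and not the full Rado theorem. As an alternative route, I could pick vectors $u_a\in U_{e_a}$ that are linearly independent and expand $\det[u_a]$ in the coordinate basis $\{S_{uv}\}$. A nonzero term of the Leibniz expansion gives a permutation pairing each $a$ with a coordinate in the support of $U_{e_a}$, which is again the desired $\phi$.
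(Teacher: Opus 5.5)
Your proposal is correct and follows essentially the same route as the paper. Both apply Hall's theorem to the coordinate supports $\{S_{uv}:\{u,v\}\cap e_a\ne\varnothing\}$, with Hall's condition $\bigl|\bigcup_{a\in X}\mathcal F_a\bigr|=\dim\sum_{a\in X}U_{e_a}=\sigma_d(X)\ge|X|$ read off from the independence of $A$, and with bijectivity coming from $|A|=\binom d2$; as in the paper, that last count relies on the context of Lemma~\ref{lem:partition}, where $M_{\off}$ has rank $\binom d2$ on the given ground set.
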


\begin{proof}
For each $a\in A$, let $\mathcal S_a$ be the set of coordinate basis vectors
contained in $U_{e_a}$.  Each $U_{e_a}$ is a coordinate subspace of
$\OO_d$ with coordinate basis $\mathcal S_a$; hence the sum
$\sum_{a\in B}U_{e_a}$ has coordinate basis
$\bigcup_{a\in B}\mathcal S_a$.  Therefore, for every $B\subseteq A$,
\[
\left|\bigcup_{a\in B}\mathcal S_a\right|
=\dim\sum_{a\in B}U_{e_a}\ge |B|.
\]
Hall's theorem~\cite{Hall1935} yields a system of distinct representatives, and because
$|A|=\binom d2$ it is a bijection onto all off-diagonal coordinates.
\end{proof}

Choose $h(a)\in e_a\cap\phi(a)$ and restrict column $a$ to
\[
Q_{h(a)}=\{w\in\C^d:w^Tw=1,\ w_{h(a)}=0\}.
\]

\begin{lemma}[Zero-branch Jacobian span]\label{lem:zero-star}
Let $e=\{h,j\}$ and suppose $w_h=0$.  Define
$\varepsilon(e,h)\in\{\pm1\}$ by
$J_e=\varepsilon(e,h)J_{hj}$.  Then
\[
[J_e,ww^T]
=\varepsilon(e,h)\left(
 w_j^2S_{hj}+\sum_{k\ne h,j}w_jw_kS_{hk}
\right).
\]
Consequently
\[
\Span\{[J_e,ww^T]:w\in Q_h\}
=\operatorname{Star}(h):=\Span\{S_{hk}:k\ne h\}.
\]
\end{lemma}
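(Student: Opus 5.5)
The plan is a direct computation of the commutator, followed by a span argument in both directions.

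\emph{The formula.} First reduce to $J_e=J_{hj}$: by definition $J_e=\varepsilon(e,h)J_{hj}$, and the bracket is linear in its first slot, so $[J_e,ww^T]=\varepsilon(e,h)[J_{hj},ww^T]$. Next compute $[J_{hj},ww^T]=J_{hj}ww^T+ww^TJ_{hj}$, using $J_{hj}^T=-J_{hj}$. We have $J_{hj}w=(E_{hj}-E_{ji})w=w_je_h-w_he_j$, and the hypothesis $w_h=0$ gives $J_{hj}w=w_je_h$. Then
\[
[J_{hj},ww^T]=(J_{hj}w)w^T+w(J_{hj}w)^T=w_j\bigl(e_hw^T+we_h^T\bigr)=w_j\sum_k w_k\,(E_{hk}+E_{kh}).
\]
The $k=h$ term vanishes because $w_h=0$, and $E_{hk}+E_{kh}=S_{hk}$ for $k\ne h$. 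Separating the term $k=j$ gives the displayed formula.

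\emph{Span: one inclusion.} The formula shows at once that every such commutator lies in $\operatorname{Star}(h)$.

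\emph{Span: the reverse inclusion.} Write $\Phi(w)=w_j\sum_{k\ne h}w_kS_{hk}$, so the image vectors are, up to sign, the coefficient vectors $(w_jw_k)_{k\ne h}$ in the basis $\{S_{hk}:k\ne h\}$. It suffices to exhibit $d-1$ points of $Q_h$ whose coefficient vectors are linearly independent. Points of $Q_h$ are the $u\in\C^{d-1}$ (coordinates indexed by $k\ne h$) with $u^Tu=1$.
\begin{itemize}
\item Taking $u=e_j$ gives the vector $e_j$.
\item For $k\ne h,j$, take $u=\alpha e_j+\beta e_k$ with $\alpha^2+\beta^2=1$ and $\alpha\beta\ne0$, for example $\alpha=\beta=1/\sqrt2$. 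This gives $\alpha^2e_j+\alpha\beta e_k$.
\end{itemize}
These $d-1$ vectors are triangular with respect to $e_j$ together with the $e_k$, hence independent. Equivalently, the span of the quadratic map $u\mapsto u_ju$ on the quadric is all of $\C^{d-1}$.

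\emph{Where the difficulty lies.} There is no real obstacle. The only care points are the sign bookkeeping in $J_{hj}w$ and checking that the chosen test vectors satisfy $u^Tu=1$ while keeping $w_j\ne0$, so that no direction is lost. Over $\C$ such vectors clearly exist.
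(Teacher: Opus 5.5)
Your proposal is correct and follows the paper's proof: you compute the commutator directly after reducing to $J_{hj}$, and for the reverse inclusion you use the same test vectors $e_j$ and $(e_j+e_k)/\sqrt2$, which give a triangular family in the basis $\{S_{hk}:k\ne h\}$. There are two harmless slips in the write-up: $J_{hj}=E_{hj}-E_{jh}$ (not $E_{ji}$), and the commutator expands as $J_{hj}ww^T-ww^TJ_{hj}=J_{hj}ww^T+ww^TJ_{hj}^T$, not $J_{hj}ww^T+ww^TJ_{hj}$; your next displayed line, $(J_{hj}w)w^T+w(J_{hj}w)^T$, is nevertheless correct.
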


\begin{proof}
The displayed formula follows from matrix multiplication using
$J_e=\varepsilon(e,h)J_{hj}$, so every image lies in the stated star.
Conversely, $w=e_j$ gives the direction
$\varepsilon(e,h)S_{hj}$.  For each $k\ne h,j$, take
\[
w=\frac{e_j+e_k}{\sqrt2}\in Q_h.
\]
Its image is
$\frac{\varepsilon(e,h)}2(S_{hj}+S_{hk})$, and subtracting the already
available $S_{hj}$ direction yields $S_{hk}$.  Hence the linear span of the
actual zero-branch images is exactly $\operatorname{Star}(h)$.  Notice that
we assert a statement about linear span, not that every vector in the star
is itself a single quadratic image.
\end{proof}

\begin{lemma}[Pure off-diagonal basis can be realized]\label{lem:pure-core}
There exist choices $w_a\in Q_{h(a)}$, $a\in A$, for which
\[
\{[J_{e_a},w_aw_a^T]:a\in A\}
\]
is a basis of $\OO_d$.
\end{lemma}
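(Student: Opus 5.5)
The plan is a multilinearity argument: the determinant of the family is multilinear in the individual Jacobian vectors, each of which ranges over a set spanning a coordinate star, and the coordinate matching supplies one choice in those stars where the determinant is $\pm1$. Throughout, fix the basis $A$ of $M_{\off}$, the bijection $\phi$ from Lemma~\ref{lem:matching}, and the chosen endpoints $h(a)\in e_a\cap\phi(a)$.

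First I will record where each vector lives. Since $h(a)\in e_a$, write $e_a=\{h(a),j_a\}$. Lemma~\ref{lem:zero-star} then shows that for $w_a\in Q_{h(a)}$ the vector $q_a:=[J_{e_a},w_aw_a^T]$ lies in $\operatorname{Star}(h(a))$. The same lemma shows that these vectors, as $w_a$ ranges over $Q_{h(a)}$, span $\operatorname{Star}(h(a))$. Moreover $h(a)\in\phi(a)$, so $\phi(a)=\{h(a),k_a\}$ and $S_{\phi(a)}\in\operatorname{Star}(h(a))$.

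Next, fix the coordinate basis $\{S_{ij}\}_{i<j}$ of $\OO_d$. Consider
\[
F\bigl((x_a)_{a\in A}\bigr):=\det\bigl[x_a\bigr]_{a\in A},\qquad x_a\in\OO_d,
\]
where the $x_a$ are expressed as columns in the coordinate basis; this is a $\binom d2\times\binom d2$ determinant because $|A|=\binom d2$. The function $F$ is multilinear in the $x_a$. Suppose, for contradiction, that $F(q_a(w_a))=0$ for every choice $w_a\in Q_{h(a)}$. I will vary one column at a time. Fixing all $w_b$ with $b\ne a$, the linear functional $x_a\mapsto F(\dots,x_a,\dots)$ vanishes on a spanning set of $\operatorname{Star}(h(a))$, hence on all of $\operatorname{Star}(h(a))$. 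Iterating over the columns, $F$ vanishes on $\prod_{a\in A}\operatorname{Star}(h(a))$.

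This contradicts the matching. Taking $x_a=S_{\phi(a)}$ is allowed by the first step. Because $\phi$ is a bijection onto $E(K_d)$, the matrix $[S_{\phi(a)}]_{a\in A}$ is a permutation matrix, so its determinant is $\pm1\ne0$. Hence some choice $w_a\in Q_{h(a)}$ gives $F\ne0$, that is, $\{[J_{e_a},w_aw_a^T]:a\in A\}$ is a basis of $\OO_d$.

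For later use, the nonvanishing locus of $F$ is then a nonempty Zariski-open subset of $\prod_a Q_{h(a)}$. This product is irreducible, since each $Q_h$ is a smooth quadric in $\C^{d-1}$ with $d-1\ge3$. So a generic choice works, which will matter when this choice is combined with the moment-submersion step.

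I do not expect a real obstacle here. The only care needed is the iterated step: the coordinates of the columns other than $a$ must be kept fixed while one column is replaced by an arbitrary element of its star. Multilinearity makes that replacement legitimate. The genuine difficulty lies later, in imposing the tight-frame moment equation on these zero-branch columns.
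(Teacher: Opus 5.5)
Your proposal is correct and follows the paper's proof essentially verbatim. In both, separate linearity of the determinant extends the assumed vanishing from the zero-branch images to $\operatorname{Star}(h(a))$ one factor at a time, and the matched coordinates $S_{\phi(a)}$ then give a permutation matrix with nonzero determinant, a contradiction; your closing remark about a nonempty Zariski-open locus in the irreducible product matches how the paper later uses this lemma in Lemma~\ref{lem:simultaneous}.
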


\begin{proof}
Fix a coordinate basis of $\OO_d$ and let
\[
D((x_a)_{a\in A})
\]
denote the determinant whose columns are the coordinate vectors of the
$x_a\in\OO_d$.  The function $D$ is separately linear in every argument.
Suppose, toward a contradiction, that
\[
D\bigl(([J_{e_a},w_aw_a^T])_{a\in A}\bigr)=0
\]
for every choice $w_a\in Q_{h(a)}$.  Fixing all factors except one and using
separate linearity, the vanishing extends from the actual image in that
factor to its linear span.  Lemma~\ref{lem:zero-star} identifies that span
with $\operatorname{Star}(h(a))$.  Repeating this argument one factor at a
time shows that $D$ vanishes on
\[
\prod_{a\in A}\operatorname{Star}(h(a)).
\]
But $S_{\phi(a)}\in\operatorname{Star}(h(a))$ for every $a$, and the matched
coordinates $S_{\phi(a)}$ are precisely the distinct coordinate basis
vectors of $\OO_d$.  Their determinant is nonzero, a contradiction.
Therefore an actual zero-branch choice with nonzero determinant exists.
\end{proof}

\section{Geometric realization of admissible bases}\label{sec:core}

\subsection{Zero-coordinate tournaments and the critical core}

Transport the heads chosen in Section~\ref{sec:matroid} through the
matching $\phi$: orient each coordinate edge $\phi(a)$ toward $h(a)$.
This produces a tournament because $\phi$ is a bijection and
$h(a)\in\phi(a)$.  We first prove a moment-map statement for any such
tournament; its branch factors depend only on the chosen heads.

For a tournament $\mathcal T$ on $[d]$, let $h(e)$ denote the terminal
vertex of $e\in E(K_d)$; the zero branch $Q_{h(e)}$ sets that coordinate
to zero.  Outdegree counts edges directed away from a vertex.  Define
\[
\mathcal B_{\mathcal T}=\prod_{e\in E(K_d)}Q_{h(e)}.
\]
Consider
\[
\Phi_{\mathcal T}:\mathcal B_{\mathcal T}\to
\{S\in\Sym(d):\tr S=\tbinom d2\},\qquad
(w_e)\mapsto\sum_e w_ew_e^T.
\]

\begin{proposition}[Tournament-zero moment submersion]\label{prop:tournament}
For every tournament $\mathcal T$ on $[d]$, $d\ge4$, the map
$\Phi_{\mathcal T}$ is submersive at some point. Moreover one may choose an
edge $e_0$ such that, after deleting that factor, the remaining moment map is
still submersive onto its trace hyperplane.
\end{proposition}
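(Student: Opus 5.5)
We reduce submersivity to a dual (cokernel) computation, mirroring the proof of Lemma~\ref{lem:regular}. The differential of $\Phi_{\mathcal T}$ at $(w_e)$ has image spanned by the matrices $w_e\dot w_e^T+\dot w_ew_e^T$, where $\dot w_e$ ranges over the tangent space of $Q_{h(e)}$. These tangent vectors are exactly the $\dot w$ with $w_e^T\dot w=0$ and $\dot w_{h(e)}=0$.

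Suppose $B\in\Symo(d)$ is trace-orthogonal to this image. Then for each $e$ the vector $Bw_e$ lies in the span of $w_e$ and the basis vector $\epsilon_{h(e)}$ (the Lagrange multipliers of the two constraints). Thus surjectivity onto the tangent space $\Symo(d)$ of the trace hyperplane is equivalent to:
\[
Bw_e=\lambda_ew_e+\mu_e\epsilon_{h(e)}\quad\text{for all }e\ \Longrightarrow\ B=0.
\]
Since submersivity is an open condition on the irreducible variety $\mathcal B_{\mathcal T}$ (a product of smooth irreducible quadrics $Q_h\cong\mathcal Q_{d-1}$, with $d-1\ge3$), it suffices to exhibit one point. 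Equivalently, we must show that the linear conditions on $B$ force $B=0$ for generic choices.

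The key step is a dimension count combined with an explicit or inductive choice. Each edge $e$ gives the condition ``$Bw_e\in\Span(w_e,\epsilon_{h(e)})$''. For generic $w_e\in Q_{h(e)}$ this imposes $d-2$ independent linear conditions on $B$. There are $\binom d2$ edges, which gives $\binom d2(d-2)$ conditions against $\dim\Symo(d)=\Nd$, so there is ample room. We would then argue one vertex at a time.

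Fix a vertex $h$ with in-neighbourhood $I_h$. Choose the columns $w_e$, $e\in I_h$, as generic vectors supported off $h$. The subspaces on which $B$ acts ``diagonally modulo $\epsilon_h$'' then constrain the compressed block of $B$ on $\epsilon_h^\perp$. Two vertices of the tournament are especially useful: a vertex of large indegree (at least $(d-1)/2$) and a vertex of large outdegree. Taking $w_e$ generic in $\epsilon_h^\perp$ for $d-1$ or more such edges would make the compression of $B$ to $\epsilon_h^\perp$ scalar, by the argument of Lemma~\ref{lem:regular} applied inside $\epsilon_h^\perp$. Combining two distinct heads $h\ne h'$, each with enough incoming edges, would then make $B$ scalar on two coordinate hyperplanes. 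Since $d\ge4$ these hyperplanes overlap, so the scalars agree, and the remaining entries are forced by symmetry. Trace zero then gives $B=0$.

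The main obstacle is that a tournament may have no vertex of indegree $d-1$; indeed indegrees can all be near $(d-1)/2$. So the per-vertex argument does not directly give a scalar block. Our first fix is to work with generic $w_e$ and induct on $d$. Delete a vertex $v$ and the edges incident to it. The induced subtournament on $d-1$ vertices is submersive by induction (base case $d=4$ checked by hand, all four tournament isomorphism classes; cf.\ Appendix~\ref{app:d4}). The $d-1$ edges at $v$, with generic columns, then supply the $d$ new directions $S_{vk}$ and $E_{vv}$ needed modulo trace. Here we use that a column with head $k\ne v$ and nonzero $v$-coordinate touches $S_{vk'}$ for all $k'$, as in Lemma~\ref{lem:zero-star}. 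There are only $d-1$ such edges but $d$ new directions; the missing one comes from the slack in the smaller moment map, which has $\binom{d-1}{2}(d-3)$ coordinates, far exceeding its target dimension. The inductive step thus has to be made precise as a rank computation on a triangular block.

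For the second assertion, we count dimensions: $\dim\mathcal B_{\mathcal T}-\Nd$ is large, so removing one factor still leaves the source dimension well above $\Nd$. We choose $e_0$ to be an edge whose head has maximal indegree, so that the remaining constraints at that head still suffice in the argument above. Concretely, we would verify that the cokernel computation never used all edges at that head, and we expect the inductive construction to leave at least one redundant edge, which serves as $e_0$.
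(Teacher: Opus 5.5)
Your architecture matches the paper's: the annihilator criterion $Bw_e\in\Span(w_e,\epsilon_{h(e)})$, induction by deleting a vertex, and the $d=4$ base case from Appendix~\ref{app:d4}. But the inductive step, which carries the whole content, is left as something that ``has to be made precise,'' and the accounting you give for it is wrong.
\begin{itemize}
\item Each factor contributes a $(d-2)$-dimensional family of directions $w_e\dot w^T+\dot w w_e^T$, so the number of edges at $v$ is not the obstruction.
\item Edges directed into $v$ have $w_v=\dot w_v=0$. Their images have vanishing $v$-th row and column, so they contribute nothing new.
\item The ``slack'' of the smaller moment map is the kernel of its differential. Variations $(0,\tilde\delta)$ of embedded old vectors never leave the lower-right block, so this slack cannot supply a missing direction.
\item Lemma~\ref{lem:zero-star} concerns $w\mapsto[J_e,ww^T]$, not the moment differential, so it does not apply here.
\end{itemize}
Most importantly, the choice of $v$ matters. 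If $v$ is a sink and the old vectors are embedded as $(0,\tilde w_e)$, every image has zero $(v,v)$-entry. Then $E_{vv}-\frac1d I$ annihilates the differential and submersivity fails.

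What must be shown is this: once the lower-right block of an annihilator is forced to be scalar, so that
\[
S=\begin{pmatrix}-(d-1)\lambda&b^T\\ b&\lambda I_{d-1}\end{pmatrix},
\]
suitable factors at $v$ force $b=0$ and $\lambda=0$. The paper takes $v$ of outdegree at least two, places $u=(1,0,1,\dots,1)$ and $z=(1,1,0,1,2,1,\dots,1)$ on two outgoing edges, and solves the resulting linear equations.

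Your slack intuition can be repaired another way. Since $h(e)\ne v$ and $w_{e,v}=0$, the vector $\epsilon_v$ is tangent to every embedded old factor, which gives $b^T\tilde w_e=0$. The $\tilde w_e$ span $\C^{d-1}$, since otherwise some $yy^T$ would annihilate $\Symo(d-1)$; hence $b=0$. Then one edge directed out of $v$, with $w_v\ne0$ and one further nonzero coordinate, gives $d\lambda=0$. So any non-sink $v$ works.

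The deletion assertion is also not proved. A dimension count never certifies submersivity, and ``we expect'' is not an argument. In the inductive step it is easy once the step is concrete: delete an edge at $v$ that the annihilator computation does not use. The paper deletes a third outgoing edge, or else an incoming edge, which exists because the indegree is then $d-3\ge2$.

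At $d=4$, however, deletion requires a genuine computation for specific deletions, and your rule (delete an edge into a vertex of maximal indegree) is not the one verified. For full head counts $(1,1,2,2)$ and $(1,1,1,3)$ it leaves head multiplicity $(2,1,1,1)$, which Appendix~\ref{app:d4} does not treat. In the inductive step the same rule may select an edge of the subtournament. There the induction hypothesis no longer applies, since it covers only the full old map or one specific deletable edge.
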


\begin{proof}
We argue by induction on $d$.  The $d=4$ case is verified in
Appendix~\ref{app:d4}.  For each of the four tournament isomorphism classes,
one edge is deleted and five explicit zero-branch vectors are chosen.  The
only element of $\Symo(4)$ annihilating all five factor differentials is then
shown to be zero.  Since $\dim\Symo(4)=9$, the five-factor differential has
rank $9$ and is submersive onto the trace hyperplane.  The full six-factor
map is therefore submersive as well.

Assume $d\ge5$ and choose a vertex $v$ of outdegree at least two; relabel it
as $1$.  Deleting vertex $1$ leaves a tournament on $\{2,\dots,d\}$.  Apply
the induction hypothesis to its full moment map.  Thus there are old branch
vectors $\widetilde w_e\in\C^{d-1}$ for which the old differential spans
$\Symo(d-1)$.  Embed them into $\C^d$ as $w_e=(0,\widetilde w_e)$.
All prescribed heads are among the vertices $2,\dots,d$, so these embedded
vectors remain admissible.  The tangent variables of the different factors vary
independently, so a matrix annihilating the differential of the product
moment map must annihilate the differential contributed by each factor
separately.  For the old factors we may therefore restrict to tangent
variations $(0,\delta\widetilde w_e)$.  If $S\in\Symo(d)$ annihilates
the full differential, its lower-right block has size
$(d-1)\times(d-1)$ and annihilates the old trace-zero differential.  The
annihilator of $\Symo(d-1)$ inside $\Sym(d-1)$ is the scalar line, so
\begin{equation}\label{eq:core-1}
S=
\begin{pmatrix}
-(d-1)\lambda & b^T\\
b&\lambda I_{d-1}
\end{pmatrix}
\end{equation}
for some $\lambda\in\C$ and $b=(b_2,\dots,b_d)^T$.

Choose two outgoing edges from vertex $1$ and relabel their heads as $2$ and
$3$.  On these two factors take the raw branch vectors
\[
u=(1,0,1,1,\dots,1),\qquad
z=(1,1,0,1,2,1,\dots,1),
\]
and normalize them.  Both raw vectors have nonzero squared norm, and
normalization does not change the annihilator condition.  For a factor with
head $h$ and branch vector $w\in Q_h$, a symmetric matrix $S$ annihilates its
moment differential exactly when
\begin{equation}\label{eq:core-2}
P_{H_h}(Sw)\in\C w,
\qquad H_h=\{x_h=0\},
\end{equation}
where $P_{H_h}$ is the coordinate projection onto $H_h$.  This equivalence
follows because the tangent space is $T_wQ_h=H_h\cap w^\perp$.

Apply \eqref{eq:core-2} first to $u$, whose head is $2$.  Let $c$ be the
proportionality factor in $P_{H_2}(Su)=cu$.  For every $j=3,\dots,d$,
\[
(Su)_j=b_j+\lambda=c,
\]
so
\[
b_j=c-\lambda\qquad (j=3,\dots,d).
\]
The first coordinate of $P_{H_2}(Su)=cu$ then gives
\[
-(d-1)\lambda+\sum_{j=3}^d b_j=c.
\]
The sum contains $d-2$ terms.  Substituting $b_j=c-\lambda$ therefore gives
\[
-(d-1)\lambda+(d-2)(c-\lambda)=c,
\]
or equivalently
\begin{equation}\label{eq:core-3}
(d-3)c=(2d-3)\lambda.
\end{equation}

Apply \eqref{eq:core-2} next to $z$, whose head is $3$, and let $\beta$ be the
corresponding proportionality factor.  Since $z_4=1$ and $z_5=2$, while
$b_4=b_5=c-\lambda$, coordinates $4$ and $5$ give
\[
\beta=(Sz)_4=b_4+\lambda=c
\]
and
\[
2\beta=(Sz)_5=b_5+2\lambda=c+\lambda.
\]
Hence $c=\lambda$.  Substituting this into \eqref{eq:core-3} gives
\[
(d-3)\lambda=(2d-3)\lambda,
\]
so $d\lambda=0$.  Since we work over $\C$, $\lambda=0$, and therefore
$c=0$ and $b_j=0$ for $j=3,\dots,d$.  Finally the second coordinate of
$P_{H_3}(Sz)=\beta z$ gives $b_2+\lambda=\beta=c$, hence $b_2=0$ as well.
Thus $b=0$ and $S=0$, so the full differential is submersive.

It remains to preserve submersivity after deleting one factor.  If vertex
$1$ has at least three outgoing edges, delete one other than the two bridge
edges used above.  If it has exactly two outgoing edges, then its indegree is
$d-3\ge2$ for $d\ge5$, so an incoming edge at vertex $1$ is available; delete
such an edge.  In either case all old tournament factors and the same two outgoing
bridge factors remain, so the preceding annihilator calculation is unchanged.
The resulting $(\binom d2-1)$-factor map is therefore still submersive.
\end{proof}

\subsection{Forcing the critical rank drop}

Set $k=\binom d2$.  Let $f_0\in E(K_d)$ be a deletable tournament edge
supplied by Proposition~\ref{prop:tournament}.  Since $\phi:A\to E(K_d)$ is
a bijection, define
\[
a_0:=\phi^{-1}(f_0)\in A,
\qquad
A':=A\setminus\{a_0\}.
\]

\begin{lemma}[Simultaneous generic core conditions]\label{lem:simultaneous}
On
\[
\mathcal P=\prod_{a\in A'}Q_{h(a)}
\]
there is a nonempty Zariski-open subset on which:
\begin{enumerate}[label=\textup{(\alph*)}]
\item the $k-1$ zero-branch Jacobians are independent in $\OO_d$;
\item $\operatorname{Star}(h(a_0))$ is not contained in their span;
\item the moment map of these $k-1$ columns is submersive.
\end{enumerate}
\end{lemma}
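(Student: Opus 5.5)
Since $\mathcal P$ is a product of smooth irreducible quadrics $Q_h\simeq\{w\in\C^{d-1}:w^Tw=1\}$ (irreducible for $d-1\ge3$), it is irreducible. Each of the three conditions is Zariski-open. Hence it suffices to show that each is nonempty separately; the three nonempty open sets then meet in a nonempty open set. Openness is standard: (a) is the nonvanishing of some $(k-1)\times(k-1)$ minor of the coordinate matrix of the Jacobians; (c) is the nonvanishing of some maximal minor of the moment differential; and (b), given (a), says that for some $S_{h(a_0)m}$ the $k$ vectors consisting of the $k-1$ Jacobians and $S_{h(a_0)m}$ are independent. The set where (a) and (b) both hold is therefore the union over $m$ of the loci where a $k\times k$ determinant is nonzero, which is open.

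For (a) and (b) together I will reuse the multilinearity argument of Lemma~\ref{lem:pure-core}. Fix $m$ with $\phi(a_0)=f_0=\{h(a_0),m\}$, so that $S_{f_0}\in\operatorname{Star}(h(a_0))$. Consider the determinant
\[
D\bigl(([J_{e_a},w_aw_a^T])_{a\in A'},\,S_{f_0}\bigr),
\]
which is separately linear in each $a\in A'$ factor. If it vanished identically on $\mathcal P$, Lemma~\ref{lem:zero-star} and the one-factor-at-a-time extension would make it vanish on $\prod_{a\in A'}\operatorname{Star}(h(a))\times\{S_{f_0}\}$. That product contains the distinct coordinate vectors $(S_{\phi(a)})_{a\in A'}$ together with $S_{\phi(a_0)}$, a full coordinate basis of $\OO_d$, which is a contradiction. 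Nonvanishing of this determinant gives (a) and (b) simultaneously.

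For (c) I invoke Proposition~\ref{prop:tournament}. Its second assertion gives a point of $\prod_{e\ne f_0}Q_{h(e)}$ at which the moment map of the $\binom d2-1$ remaining factors is submersive onto its trace hyperplane. Through the bijection $\phi$, the factors $e\ne f_0$ correspond exactly to $a\in A'$, with the same heads $h(a)$, since the tournament was defined by orienting $\phi(a)$ toward $h(a)$. So this point lies in $\mathcal P$ and (c) holds there.

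The main point to check is this identification. The deletable edge $f_0$ in the proposition must be the edge used to define $a_0$, and the branch factors must agree, which the construction $a_0=\phi^{-1}(f_0)$ ensures. One should also note that submersivity is measured onto the fixed-trace hyperplane, whose direction space is $\Symo(d)$; this matches the codimension count used later.
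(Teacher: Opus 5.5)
Your proof is correct and has the same structure as the paper's: irreducibility of $\mathcal P$, combining (a) and (b) into the nonvanishing of one $k\times k$ determinant with an auxiliary star coordinate vector, obtaining (c) from the deleted-factor clause of Proposition~\ref{prop:tournament}, and intersecting the open sets. The only difference is how the auxiliary vector is found: the paper extracts a suitable $s_0\notin V_*$ from the basis point of Lemma~\ref{lem:pure-core}, whereas you rerun that lemma's multilinearity argument with the explicit choice $s_0=S_{\phi(a_0)}$, which works equally well.
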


\begin{proof}
Each $Q_{h(a)}$ is an irreducible affine quadric: it is cut out in the
$(d-1)$-dimensional coordinate hyperplane $H_{h(a)}$ by the nondegenerate
equation $w^Tw=1$, with $d-1\ge3$.  Hence the product $\mathcal P$ is
irreducible.

We first combine (a) and (b) into one explicit rank condition.  Choose a
zero-branch basis point given by Lemma~\ref{lem:pure-core}, and let
\[
V_*:=\Span\{q_a:a\in A'\}
\]
at that point.  Since the full family indexed by $A$ is a basis of $\OO_d$,
the pivot vector $q_{a_0}$ does not lie in $V_*$.  On the other hand,
$q_{a_0}\in\operatorname{Star}(h(a_0))$.  Therefore not every vector in the
fixed coordinate basis
\[
\{S_{h(a_0)\ell}:\ell\ne h(a_0)\}
\]
of $\operatorname{Star}(h(a_0))$ can lie in $V_*$.  Fix one such basis vector
$s_0\notin V_*$.  The condition
\[
\rank\bigl(q_a:a\in A',\ s_0\bigr)=k
\]
is the nonvanishing of a $k\times k$ minor.  It is therefore a nonempty
Zariski-open condition on $\mathcal P$.  Wherever it holds, the $k-1$
vectors indexed by $A'$ are independent and $s_0$ is outside their span;
thus both (a) and (b) hold.

Condition (c) is the nonvanishing of a maximal minor of the moment
differential and is nonempty by the deleted-factor assertion in
Proposition~\ref{prop:tournament}.  Hence (c) also holds on a nonempty
Zariski-open subset of $\mathcal P$.  The two nonempty opens intersect because
$\mathcal P$ is irreducible.
\end{proof}

For $p\in\mathcal P$ write
\[
G_0(p)=\frac{\Nd}{d}I-\sum_{a\in A'}w_aw_a^T.
\]
Then $p\mapsto G_0(p)$ is dominant onto the affine trace-$d$ hyperplane:
condition (c) says that its differential is surjective at one point, which is
enough for dominance.  Let $e_0=e_{a_0}=\{h,j\}$ and
$H_h=\{x\in\C^d:x_h=0\}$.

The next two lemmas provide the pivot mechanism for the critical core.  The
first identifies the linear span of rank-one matrices on a generic quadratic
section; the second uses that span to choose the missing pivot on the
rank-drop locus without losing the full star of Jacobian directions.

\begin{lemma}[Restricted quadratic span]\label{lem:quadspan}
Let $A\in\Sym(H_h)$ be generic and define
\[
C_A=\{w\in H_h:w^Tw=1,\ w^TAw=0\}.
\]
Then
\[
\Span\{ww^T:w\in C_A\}=A^\perp\subset\Sym(H_h).
\]
\end{lemma}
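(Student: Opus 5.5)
The inclusion $\subseteq$ is immediate, so the plan is to prove the reverse inclusion by dualizing and reducing it to a statement about quadrics. For every $w\in C_A$ we have $\langle A,ww^T\rangle=\tr(Aww^T)=w^TAw=0$, so the span is contained in $A^\perp$. The trace pairing on $\Sym(H_h)$ is nondegenerate. Hence equality is equivalent to showing that the annihilator of $\Span\{ww^T:w\in C_A\}$ is exactly the line $\C A$. Concretely, I must show: if $B\in\Sym(H_h)$ satisfies $w^TBw=0$ for all $w\in C_A$, then $B=\lambda A$ for some $\lambda\in\C$.

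I will impose only two genericity conditions on $A$, both nonempty Zariski-open:
\begin{itemize}
\item $A$ is nondegenerate on $H_h$;
\item $A$ is not a scalar multiple of the identity form $w^Tw$ on $H_h$.
\end{itemize}
Put $n=\dim H_h=d-1\ge3$ and consider the projective quadric
\[
Z_A=\{[w]\in\mathbb P(H_h):w^TAw=0\}.
\]
Since $A$ is a nondegenerate quadratic form in $n\ge3$ variables, $Z_A$ is an irreducible, reduced hypersurface of degree $2$. Its homogeneous ideal is therefore generated by the single quadric $q_A(w)=w^TAw$. In particular, every quadratic form vanishing identically on $Z_A$ is a scalar multiple of $q_A$.

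The main step, and the only point needing care, is to pass from vanishing on the affine section $C_A$ to vanishing on all of $Z_A$.
\begin{enumerate}
\item Let $U=\{[w]\in Z_A:w^Tw\ne0\}$. This is Zariski-open in $Z_A$.
\item $U$ is nonempty. Otherwise $Z_A$ would lie in the quadric $\{w^Tw=0\}$. Both are irreducible hypersurfaces of the same degree, so they would coincide, and by the ideal statement above $A$ would be proportional to the identity form. This is excluded by the second genericity condition.
\item Every $[w]\in U$ has a representative in $C_A$, obtained by rescaling $w$ so that $w^Tw=1$ (possible over $\C$). The condition $w^TBw=0$ is homogeneous in $w$, so $B$ vanishes on $U$.
\item Since $Z_A$ is irreducible, $U$ is dense in $Z_A$, and therefore $q_B$ vanishes on all of $Z_A$.
\end{enumerate}

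By the ideal statement, $q_B=\lambda q_A$, that is $B=\lambda A$. Taking annihilators then gives $\Span\{ww^T:w\in C_A\}=(\C A)^\perp=A^\perp$.

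I expect the only real obstacle to be justifying the irreducibility and reducedness of $Z_A$ and the resulting description of its degree-$2$ ideal. This is standard: a nondegenerate quadratic form in at least three variables does not factor into linear forms, and this is exactly where the hypothesis $d-1\ge3$ (coming from $d\ge4$) is used.
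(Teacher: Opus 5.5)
Your proof is correct, and it takes a genuinely different route from the paper's.

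\textbf{The paper's route.} The paper first restricts to $A$ with simple eigenvalues, so that $A$ can be orthogonally diagonalized over $\C$ (it checks separately that the eigenvectors are non-isotropic). In the squared coordinates $z_i=w_i^2$, the set $C_A$ then becomes the affine $(d-3)$-plane $P=\{\sum_i z_i=1,\ \sum_i a_iz_i=0\}$. The sign flips $w_i\mapsto\varepsilon_i w_i$, together with linear independence of the characters $\varepsilon_i\varepsilon_j$, kill the off-diagonal entries of an annihilating $S$. Finally, the surviving homogeneous linear form on $P$ is shown to be a multiple of $\sum_i a_iz_i$.

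\textbf{Your route.} You homogenize instead. The condition $w^TBw=0$ on $C_A$ becomes vanishing of $q_B$ on the nonempty open subset $\{w^Tw\neq0\}$ of the projective quadric $Z_A$. Irreducibility of $Z_A$ spreads this to all of $Z_A$. Primality of $(q_A)$ for a form of rank at least $3$, together with the projective Nullstellensatz, then forces $q_B\in\C\,q_A$.

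\textbf{Comparison.} Your argument is shorter and needs only weaker genericity: $\rank A\ge3$ and $A\notin\C I$, rather than a nonvanishing discriminant. It also sidesteps the fact that complex symmetric matrices need not be orthogonally diagonalizable, which the paper must handle by hand. The paper's argument stays entirely within linear algebra and a finite Fourier computation, and it yields an explicit description of $C_A$ with points having all coordinates nonzero. Both proofs use $d-1\ge3$ at the analogous place: you need it for irreducibility of the quadric, the paper needs it to choose two auxiliary indices when showing $P$ avoids every coordinate hyperplane.

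\textbf{A small polish.} In your step 2, the coincidence of $Z_A$ with $\{w^Tw=0\}$ follows from irreducibility and equal dimension, not from equal degree. You can skip that step altogether: if $Z_A\subseteq\{w^Tw=0\}$, then $w^Tw$ vanishes on $Z_A$. Your ideal statement then gives $w^Tw\in\C\,q_A$, which contradicts $A\notin\C I$ directly.
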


\begin{proof}
Work on the nonempty Zariski-open locus where the characteristic polynomial
of $A$ has simple roots; this is the nonvanishing locus of its discriminant
and contains every diagonal matrix with pairwise distinct entries.  Then $A$
has an eigenbasis.  Since $A$ is symmetric, eigenvectors for distinct
eigenvalues are orthogonal for the standard bilinear form.  None can be
isotropic: otherwise it would be orthogonal to the whole eigenbasis,
contradicting nondegeneracy.  After rescaling, choose a complex orthogonal
basis in which $A=\operatorname{diag}(a_1,\dots,a_{d-1})$, and put
$z_i=w_i^2$.  The equations are
\[
\sum_i z_i=1,\qquad \sum_i a_iz_i=0.
\]
Pairwise distinct $a_i$ make these two affine constraints independent, so
their common zero set $P$ is an affine $(d-3)$-plane.  It is nonempty and is
not contained in any coordinate hyperplane.  Indeed, for any fixed index
$k$, choose two distinct indices $\ell,m\ne k$; this is possible because
$d-1\ge3$.  Set $z_k=1$ and all coordinates except
$z_k,z_\ell,z_m$ equal to zero.  The two defining equations of $P$ then form
a nonsingular $2\times2$ system for $z_\ell,z_m$, since
$a_\ell\ne a_m$.  Hence $P$ contains a point with $z_k\ne0$ for every
$k$, and the complement in $P$ of the coordinate hyperplanes is a dense
nonempty open subset.  Every point of this subset lifts to points of $C_A$
by choosing square roots.

Suppose $S=(s_{ij})\in\Sym(H_h)$ annihilates all $ww^T$ with $w\in C_A$.
Fix such a $z$ with every coordinate nonzero and choose square roots
$r_i^2=z_i$.  For every sign vector $\varepsilon\in\{\pm1\}^{d-1}$, the
vector $w_i=\varepsilon_i r_i$ still lies in $C_A$.  Thus
\[
0=w^TSw=\sum_i s_{ii}z_i
   +2\sum_{i<j}s_{ij}\varepsilon_i\varepsilon_j r_i r_j
\]
for every sign vector.  The constant function together with the characters
$\varepsilon_i\varepsilon_j$ on $\{\pm1\}^{d-1}$ is a linearly independent
family, so every Fourier coefficient of this expression vanishes.  In
particular, $s_{ij}r_i r_j=0$ for every $i<j$.  Since all $r_i$ are nonzero, every
off-diagonal entry of $S$ vanishes.  Hence
$S=\operatorname{diag}(s_i)$.
The homogeneous linear form $\ell(z)=\sum_i s_i z_i$ vanishes on the
affine plane
\[
P=\left\{z:\sum_i z_i=1,\ \sum_i a_i z_i=0\right\}.
\]
The affine linear forms vanishing on $P$ are exactly the span of
$\sum_i z_i-1$ and $\sum_i a_i z_i$.  Hence
\[
\ell(z)=\alpha\left(\sum_i z_i-1\right)
       +\beta\sum_i a_i z_i.
\]
Comparing constant terms gives $\alpha=0$, because $\ell$ is homogeneous.
Thus $s_i=\beta a_i$ for every $i$, so $S=\beta A$.  Therefore the
annihilator of the span is exactly $\C A$, which is equivalent to the stated
identity with $A^\perp$.
\end{proof}

\begin{lemma}[Rank-drop star span]\label{lem:rankdrop-star}
For generic $G_0\in\{G\in\Sym(d):\tr G=d\}$, the vectors
\[
w\in H_h,\qquad w^Tw=1,\qquad \det(G_0-ww^T)=0
\]
have Jacobian images under $w\mapsto[J_{e_0},ww^T]$ spanning all of
$\operatorname{Star}(h)$.
\end{lemma}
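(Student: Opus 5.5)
The plan is to convert the rank-drop condition into a second quadric, apply Lemma~\ref{lem:quadspan}, and then push the resulting span through the linear map $X\mapsto[J_{e_0},X]$. First restrict to the nonempty Zariski-open set of $G_0$ in the trace-$d$ hyperplane where $G_0$ is invertible; it contains $I$. The matrix determinant lemma gives
\[
\det(G_0-ww^T)=\det(G_0)\bigl(1-w^TG_0^{-1}w\bigr).
\]
Combined with $w^Tw=1$, the rank-drop locus inside $H_h$ is therefore
\[
\{w\in H_h: w^Tw=1,\ w^TAw=0\}=C_A,
\qquad
A:=P\bigl(G_0^{-1}-I\bigr)P^T\in\Sym(H_h),
\]
where $P$ is the compression onto $H_h$.

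Next I need $A$ to be generic in the sense of Lemma~\ref{lem:quadspan} whenever $G_0$ is generic. I would show that the map $G_0\mapsto A$ from the trace-$d$ hyperplane to $\Sym(H_h)$ is dominant. Consider the inverse map $G_0\mapsto N:=G_0^{-1}$. The compression $PNP^T$ can be prescribed freely; only the trace condition $\tr N^{-1}=d$ constrains the remaining entries of $N$. For instance, the $(h,h)$ entry of $N$ can be adjusted to satisfy this single equation, because $\tr N^{-1}$ is nonconstant in that entry for generic values of the others. Hence the preimage of the discriminant-nonvanishing locus used in Lemma~\ref{lem:quadspan} is a nonempty open set of $G_0$. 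On it, Lemma~\ref{lem:quadspan} gives
\[
\Span\{ww^T:w\in C_A\}=A^\perp .
\]

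The Jacobian $w\mapsto[J_{e_0},ww^T]$ is the restriction of the linear map $\Psi:X\mapsto[J_{e_0},X]$ on $\Sym(H_h)$. So the span of the images equals $\Psi(A^\perp)$. Lemma~\ref{lem:zero-star}, together with the fact that the $ww^T$ with $w\in Q_h$ span $\Sym(H_h)$, gives the explicit formula
\[
\Psi(X)=\varepsilon\Bigl(X_{jj}S_{hj}+\sum_{k\ne h,j}X_{jk}S_{hk}\Bigr),
\qquad \Psi\bigl(\Sym(H_h)\bigr)=\operatorname{Star}(h).
\]
It follows that $\ker\Psi=:K$ consists of the matrices in $\Sym(H_h)$ whose $j$-th row vanishes. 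Because $A^\perp$ is a hyperplane, $\Psi(A^\perp)=\operatorname{Star}(h)$ holds exactly when $A^\perp+K=\Sym(H_h)$, that is, exactly when $K\not\subseteq A^\perp$.

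The final step is that $K\not\subseteq A^\perp$ is again a nonempty Zariski-open condition on $A$. For $d\ge4$ there are indices $k\ne h,j$, and $E_{kk}\in K$. Its pairing with $A$ is $A_{kk}$, which is nonzero for generic $A$. Intersecting this open condition with the previous ones, and pulling back along the dominant map $G_0\mapsto A$, finishes the proof.

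I expect the main obstacle to be the dominance/genericity transfer from $G_0$ to $A$. The trace normalization on $G_0$ constrains $G_0^{-1}$ only through a single nonlinear equation, so that step needs the short argument above to be made carefully. Everything else is linear algebra on top of Lemmas~\ref{lem:quadspan} and \ref{lem:zero-star}.
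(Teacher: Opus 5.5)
Your proof is correct and follows the paper's proof step for step. The steps are: invertibility of $G_0$ and the matrix determinant lemma; Lemma~\ref{lem:quadspan}; the hyperplane criterion for the image of $A^\perp$ under $X\mapsto[J_{e_0},X]$ to be all of $\operatorname{Star}(h)$; and dominance of $G_0\mapsto A$. Your condition $K\not\subseteq A^\perp$ is the paper's $A\notin\operatorname{im}L^*$, because $\operatorname{im}L^*=K^\perp$, and your witness $E_{kk}$ replaces its dimension count.

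One small correction in the dominance step. Write $M=I+A$ for the $H_h$-block of $N$ and $b$ for its off-diagonal column. As a function of $N_{hh}$ one has $\tr N^{-1}=\tr M^{-1}+(1+b^TM^{-2}b)/(N_{hh}-b^TM^{-1}b)$, which never takes the value $\tr M^{-1}$. So ``nonconstant'' alone does not give solvability. You need the extra open condition $\tr\bigl((I+A)^{-1}\bigr)\ne d$, which is exactly the paper's $\gamma\ne0$ in its explicit choice $G_0=\operatorname{diag}((I+A)^{-1},\gamma)$.
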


\begin{proof}
For generic $G_0$ we may assume that $G_0$ is invertible.  The matrix
determinant lemma gives
\[
\det(G_0-ww^T)=\det(G_0)(1-w^TG_0^{-1}w).
\]
Thus the rank-drop locus inside $H_h$ is
\begin{equation}\label{eq:core-4}
w^Tw=1,\qquad w^TAw=0,
\qquad
A=(G_0^{-1}-I)|_{H_h}.
\end{equation}
Lemma~\ref{lem:quadspan} therefore gives
\[
\Span\{ww^T:w\text{ satisfies }\eqref{eq:core-4}\}=A^\perp\subset\Sym(H_h).
\]

Let
\[
L=L_{e_0,h}:\Sym(H_h)\longrightarrow\operatorname{Star}(h),
\qquad X\longmapsto [J_{e_0},X].
\]
Lemma~\ref{lem:zero-star} implies that $L$ is surjective.  We claim that
\begin{equation}\label{eq:core-5}
L(A^\perp)=\operatorname{Star}(h)
\quad\Longleftrightarrow\quad
A\notin\operatorname{im}L^*,
\end{equation}
where the adjoint is taken with respect to the trace pairing.  Indeed, the
annihilator of $L(A^\perp)$ consists of those
$\eta\in\operatorname{Star}(h)^*$ for which
$L^*\eta\in(A^\perp)^\perp=\C A$.  Since $L$ is surjective, $L^*$ is
injective.  Therefore $L(A^\perp)$ is proper precisely when some nonzero
$L^*\eta$ is a scalar multiple of $A$, which is equivalent to
$A\in\operatorname{im}L^*$.  Moreover
\[
\dim\operatorname{im}L^*=d-1
<\dim\Sym(H_h)=\frac{d(d-1)}2,
\]
so $\operatorname{im}L^*$ is a proper linear subspace.

It remains to see that the exceptional condition in \eqref{eq:core-5} is genuinely
nongeneric in $G_0$.  On the open subset of the trace-$d$ hyperplane where
$G_0$ is invertible, consider the rational map
\[
F(G_0)=(G_0^{-1}-I)|_{H_h}.
\]
Its image contains a dense open subset of $\Sym(H_h)$.  To see this, take
any $A$ for which $I+A$ is invertible and
\[
\gamma=d-\tr((I+A)^{-1})\ne0,
\]
and set, relative to $\C^d=H_h\oplus\C e_h$,
\[
G_0=\operatorname{diag}((I+A)^{-1},\gamma).
\]
Then $G_0$ is invertible, $\tr G_0=d$, and $F(G_0)=A$.  Hence $F$ is
dominant and generic $G_0$ avoids the proper subspace
$\operatorname{im}L^*$.  Combining \eqref{eq:core-4}--\eqref{eq:core-5} with
Lemma~\ref{lem:quadspan}, the rank-drop Jacobian images span the whole
$\operatorname{Star}(h)$.
\end{proof}

\begin{proposition}[Critical off-diagonal core with rank-$d-1$ residual]\label{prop:critical}
There exist unit columns labelled by the edges in $A$ such that
\[
\Span\{q_a:a\in A\}=\OO_d
\]
and
\[
G=\frac{\Nd}{d}I-\sum_{a\in A}w_aw_a^T
\]
has $\rank G=d-1$ and $\tr G=d-1$.
\end{proposition}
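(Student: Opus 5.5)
The plan is to build the configuration in two stages. First choose the $k-1$ columns indexed by $A'$ generically on $\mathcal P$, using Lemma~\ref{lem:simultaneous}. Then choose the pivot column $a_0$ on the rank-drop locus of the residual. The trace condition is automatic. Each unit column has $w_a^Tw_a=1$, so
\[
\tr G=\Nd-\binom d2=d-1.
\]
Thus the real content is the rank-$(d-1)$ condition together with the spanning condition.

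\textbf{Generic choice of the $A'$ columns.} Since $p\mapsto G_0(p)$ is dominant onto the trace-$d$ hyperplane, the generic conditions of Lemma~\ref{lem:rankdrop-star} pull back to a nonempty Zariski-open subset of $\mathcal P$. Irreducibility of $\mathcal P$ lets us intersect this with the open set of Lemma~\ref{lem:simultaneous}. At such a $p$, conditions (a) and (b) hold, and $G_0=G_0(p)$ is invertible and generic in the sense of Lemma~\ref{lem:rankdrop-star}. Genericity may also be taken to include that $G_0$ has simple spectrum and that $A=(G_0^{-1}-I)|_{H_h}$ is generic as required in Lemma~\ref{lem:quadspan}.

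\textbf{Choice of the pivot column.} Let $Z$ be the set of $w\in H_h$ with $w^Tw=1$ and $\det(G_0-ww^T)=0$. By Lemma~\ref{lem:rankdrop-star}, the images $[J_{e_0},ww^T]$ for $w\in Z$ span $\operatorname{Star}(h)$. By (b), $\operatorname{Star}(h)\not\subseteq V:=\Span\{q_a:a\in A'\}$. Hence some $w\in Z$ has $q_{a_0}=[J_{e_0},ww^T]\notin V$. Indeed, if every image lay in $V$, the span would lie in $V$, a contradiction. Setting $w_{a_0}=w$, which is admissible since $h=h(a_0)\in e_0$, the $k$ vectors $q_a$, $a\in A$, are independent by (a). They therefore span $\OO_d$, because $\dim\OO_d=k$.

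\textbf{The rank of $G$.} We have $G=G_0-ww^T$ with $G_0$ invertible, so $\rank G\ge d-1$ because it is a rank-one perturbation of an invertible matrix. Also $\det G=0$ because $w\in Z$. Hence $\rank G=d-1$ exactly.

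\textbf{Main obstacle.} The delicate point is ensuring the genericity hypotheses of Lemma~\ref{lem:rankdrop-star} are simultaneously achievable with (a)--(c). This requires that the bad set of $G_0$ (non-invertible, or $A\in\operatorname{im}L^*$, or non-simple spectrum) is a proper closed subset of the hyperplane, which Lemma~\ref{lem:rankdrop-star} supplies. Its preimage under the dominant map $G_0(\cdot)$ is then a proper closed subset of $\mathcal P$. Everything else is a linear-algebra selection argument. One also checks that $Z$ is nonempty, which follows from Lemma~\ref{lem:quadspan}, since its span is nonzero.
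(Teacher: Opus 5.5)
Your proposal is correct and follows the paper's proof essentially step for step. You pull back the open set of Lemma~\ref{lem:rankdrop-star} along the dominant map $p\mapsto G_0(p)$, intersect it with the open set of Lemma~\ref{lem:simultaneous}, choose the pivot on the rank-drop locus outside $\Span\{q_a:a\in A'\}$ using (a), (b) and the star-spanning property, and get $\rank G=d-1$ from invertibility of $G_0$ together with the rank-one update. The extra genericity you impose (simple spectrum of $G_0$ itself) is harmless but not needed: only the genericity of $(G_0^{-1}-I)|_{H_h}$ enters, and that is already built into Lemma~\ref{lem:rankdrop-star}.
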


\begin{proof}
Let $\mathcal U_{\rm rd}$ be the nonempty Zariski-open subset of the
trace-$d$ hyperplane on which Lemma~\ref{lem:rankdrop-star} applies.  Since
$p\mapsto G_0(p)$ is dominant, the inverse image of $\mathcal U_{\rm rd}$
under this map is a nonempty Zariski-open subset of $\mathcal P$.  Intersect
it with the nonempty open set of Lemma~\ref{lem:simultaneous} and choose
$p$ there.
Lemma~\ref{lem:rankdrop-star} then allows the pivot column to be chosen on
the rank-drop locus with
\[
q_{a_0}\notin\Span\{q_a:a\in A'\}.
\]
Thus all $k$ core Jacobians form a basis of $\OO_d$. The residual
$G=G_0-w_{a_0}w_{a_0}^T$ is singular. Since $G_0$ is invertible and a
rank-one update lowers rank by at most one, $\rank G=d-1$. Finally
$\tr G=\Nd-k=d-1$.
\end{proof}
\subsection{Residual geometry and spanning-tree completion}\label{sec:residual}

The critical core leaves a symmetric residual of trace and generic rank
$d-1$.  We first show that the residuals produced by such cores fill a
nonempty open subset of the residual determinantal variety.  We then show
that generic residuals admit unit rank-one decompositions compatible with the
fixed spanning tree; combining the two steps completes the basis realization.

Let
\[
\RR_{d-1}=\{G\in\Sym(d):\tr G=d-1,\ \rank G\le d-1\}.
\]

\begin{lemma}[Irreducibility of the residual variety]\label{lem:Rirr}
The variety $\RR_{d-1}$ is irreducible.
\end{lemma}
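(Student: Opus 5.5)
The plan is to exhibit $\RR_{d-1}$ as the image of an irreducible variety under a morphism; alternatively, I would show it is a hypersurface in an affine space whose defining polynomial is irreducible. The cleanest route is the parametrization. Every symmetric $d\times d$ matrix of rank at most $d-1$ over $\C$ can be written as $G=\sum_{i=1}^{d-1}y_iy_i^T$ for some vectors $y_1,\dots,y_{d-1}\in\C^d$. This holds because a complex symmetric matrix of rank $r$ is congruent to $\operatorname{diag}(I_r,0)$ (Takagi/Autonne, or simply congruence over an algebraically closed field), so $G=PDP^T$ gives such a decomposition, padding with zero vectors. The map
\[
\Psi:(\C^d)^{d-1}\to\Sym(d),\qquad (y_i)\mapsto\sum_i y_iy_i^T,
\]
therefore has image exactly $\{G:\rank G\le d-1\}$, the determinantal hypersurface $\{\det G=0\}$. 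Since the domain is irreducible, this hypersurface is irreducible.

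The trace condition then has to be imposed. The first option is to pull back: $\RR_{d-1}=\Psi(Z)$ with
\[
Z=\Bigl\{(y_i):\sum_i y_i^Ty_i=d-1\Bigr\},
\]
a smooth affine quadric in $\C^{d(d-1)}$. The defining quadratic form is nondegenerate of rank $d(d-1)\ge3$, so $Z$ is irreducible, and hence its image $\RR_{d-1}$ is irreducible. This is the argument I would commit to, since it avoids any dimension bookkeeping.

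As a cross-check, one can argue via homogeneity instead. The cone $\{\det G=0\}$ is invariant under scaling, and the trace is a linear form. Irreducibility of the hyperplane section $\{\tr G=d-1\}$ follows by identifying it, via $G\mapsto G/(d-1)$ and projectivization, with an affine chart of the irreducible projective hypersurface $\{\det=0\}\subset\mathbb P(\Sym(d))$. That chart is the complement of the hyperplane $\{\tr=0\}$, which does not contain the hypersurface, since the hypersurface contains $\operatorname{diag}(1,\dots,1,0)$. An open subset of an irreducible variety is irreducible and nonempty.

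The only point requiring care is surjectivity of $\Psi$ onto rank-$\le d-1$ matrices \emph{together with} the trace constraint. That is, every $G$ of trace $d-1$ and rank $\le d-1$ must have a preimage in $Z$. This is automatic because $\tr\Psi(y)=\sum_i y_i^Ty_i$. So there is no real obstacle; the mildest subtlety is the symmetric-congruence normal form over $\C$, which I would cite as standard.
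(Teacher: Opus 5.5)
Your main argument is correct and is essentially the paper's proof. The paper likewise realizes $\RR_{d-1}$ as the image of the irreducible affine quadric $\{B\in\C^{d\times(d-1)}:\tr(BB^T)=d-1\}$ (whose quadratic part is nondegenerate of rank $d(d-1)\ge3$) under $B\mapsto BB^T$, gets surjectivity from congruence diagonalization of complex symmetric matrices, and notes that the trace condition matches automatically.
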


\begin{proof}
Put $r=d-1$ and consider
\[
\mathcal B_r
=
\left\{B\in\C^{d\times r}:\tr(BB^T)=r\right\}.
\]
The defining polynomial is a nondegenerate affine quadratic polynomial in
$dr\ge3$ variables.  It is irreducible over $\C$: a factorization would
have to be into affine linear factors, forcing the quadratic part to have
rank at most two, whereas here its rank is $dr\ge3$.  Hence $\mathcal B_r$
is irreducible.  The morphism
\[
\pi:\mathcal B_r\longrightarrow\RR_{d-1},
\qquad B\longmapsto BB^T,
\]
is surjective.  Indeed, if $G$ has rank $s\le r$, congruence
diagonalization of complex symmetric matrices gives
$G=C\operatorname{diag}(I_s,0)C^T$ for some invertible $C$.  Taking the first
$s$ columns of $C$ and padding with $r-s$ zero columns produces
$B\in\C^{d\times r}$ with $G=BB^T$.  Since $G\in\RR_{d-1}$,
$\tr(BB^T)=\tr G=r$, so $B\in\mathcal B_r$.  Hence the image
$\RR_{d-1}$ is irreducible.
\end{proof}

\begin{proposition}[Residual dominance]\label{prop:residual-dominance}
The residual matrices arising from critical off-diagonal cores satisfying
$\Span\{q_a:a\in A\}=\OO_d$ contain a
nonempty Zariski-open subset of $\RR_{d-1}$.
\end{proposition}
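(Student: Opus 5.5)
Since $\RR_{d-1}$ is irreducible (Lemma~\ref{lem:Rirr}), I will prove that the set of residuals coming from good cores is constructible and dense in $\RR_{d-1}$. Equivalently, the map from the parameter space of good cores to $\RR_{d-1}$ is dominant. Chevalley's theorem then says that the image contains a nonempty open subset.

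For the parameter space I take the incidence variety
\[
\mathcal I=\Bigl\{(p,w_{a_0})\in\mathcal P\times Q_{h}:\ \det\bigl(G_0(p)-w_{a_0}w_{a_0}^T\bigr)=0\Bigr\},
\]
and restrict to the open locus where the $k$ Jacobians span $\OO_d$. This locus is nonempty by Proposition~\ref{prop:critical}. The residual map is
\[
\Psi(p,w_{a_0})=G_0(p)-w_{a_0}w_{a_0}^T\in\RR_{d-1}.
\]
The image of an open subset of the irreducible component of $\mathcal I$ that contains the point from Proposition~\ref{prop:critical} is constructible. So it suffices to show that $\Psi$ is dominant on that component.

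\textbf{Step 1 (dimension count).} The target has $\dim\RR_{d-1}=\dim\Sym(d)-2=\binom{d+1}{2}-2$, since it is a hypersurface inside a hyperplane.

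\textbf{Step 2 (factor through $G_0$).} The map $p\mapsto G_0(p)$ is dominant onto the trace-$d$ hyperplane by condition~(c) of Lemma~\ref{lem:simultaneous}. It therefore suffices to show that the map
\[
(G_0,w)\longmapsto G_0-ww^T,
\]
defined on the incidence variety $\{G_0 \text{ generic},\ w\in H_h,\ w^Tw=1,\ \det(G_0-ww^T)=0\}$, is dominant onto $\RR_{d-1}$. I would check this by inverting it. Take a generic $G\in\RR_{d-1}$ and any $w\in H_h$ with $w^Tw=1$ and $w$ not in the image of $G$. Then $G_0:=G+ww^T$ has trace $d$, is invertible (rank-one update across the kernel), and $\det(G_0-ww^T)=0$. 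Hence every generic $G$ is attained.

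The remaining issue is that the point $(G_0,w)$ must come from an actual core $p$ lying in the good open set, rather than from an arbitrary point of the incidence variety. For this I intersect three conditions on $G_0$ and $w$: $G_0$ lies in the open image of $p\mapsto G_0(p)$ intersected with $\mathcal U_{\rm rd}$, the point $p$ lies in the open set of Lemma~\ref{lem:simultaneous}, and $q_{a_0}(w)\notin\Span\{q_a:a\in A'\}$.

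\textbf{Main obstacle.} The hard step is showing that these open conditions cut out a \emph{dense} subset of the incidence fibration, rather than a set that only meets the image of $\Psi$ in a proper subvariety. I would handle it as follows.
\begin{enumerate}[label=\textup{(\roman*)}]
\item The incidence variety $\{(G_0,w)\}$ fibers over the irreducible quadric $\{w\in H_h:w^Tw=1\}$, and each fiber is an affine hypersurface $\{G_0:w^TG_0^{-1}w=1\}$ in the trace-$d$ hyperplane. I would check irreducibility of this variety, for example via the parametrization $(G,w)\mapsto(G+ww^T,w)$ with $G\in\RR_{d-1}$, which is birational onto it.
\item Each of the three open conditions is nonempty on the incidence variety, as witnessed by the point from Proposition~\ref{prop:critical}. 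Irreducibility then makes their intersection dense.
\item The map $(G,w)\mapsto G$ is a projection with surjective, indeed dominant, restriction. Hence the residuals of good cores are dense in $\RR_{d-1}$.
\end{enumerate}
Chevalley's theorem then completes the argument.
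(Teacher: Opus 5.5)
Your Step~2 reduction and your step (ii) both rest on a claim you have not proved, and that claim is the heart of the proposition.

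Write $\mathcal J=\{(G_0,w):\tr G_0=d,\ w\in Q_h,\ \det(G_0-ww^T)=0\}$ for your second incidence variety and $F(p,w)=(G_0(p),w)$, so that $\mathcal I=F^{-1}(\mathcal J)$. Your observation that $(G,w)\mapsto(G+ww^T,w)$ identifies $\RR_{d-1}\times Q_h$ with $\mathcal J$ is correct. So $\mathcal J$ is irreducible and projects onto $\RR_{d-1}$.

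However, the conditions you need are conditions on the core $p$, not on $(G_0,w)$. These are that $p$ lies in the open set of Lemma~\ref{lem:simultaneous}, and that $q_{a_0}(w)\notin\Span\{q_a(p):a\in A'\}$. The span depends on $p$, not only on $G_0(p)$. The set of $(G_0,w)\in\mathcal J$ that admit a good lift is the image $F(\mathcal I^{\mathrm{good}})$. This set is constructible and nonempty, but it is not known to be open, so irreducibility of $\mathcal J$ gives no density. Irreducibility of $\mathcal I$ itself is also not established, which is why the paper works on a single component.

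What you need is that the component of $\mathcal I$ through the good point dominates $\mathcal J$. Dominance of $p\mapsto G_0(p)$ onto the trace-$d$ hyperplane does not give this, because the preimage of a hypersurface under a dominant map need not dominate that hypersurface.

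The missing ingredient is a local computation at the point $(p,w)$ of Proposition~\ref{prop:critical}.
\begin{enumerate}[label=\textup{(\arabic*)}]
\item Since $\rank G=d-1$ with $\ker G=\C z$, the residual $G$ is a smooth point of $\RR_{d-1}$, with tangent space $\{H:\tr H=0,\ z^THz=0\}$.
\item Condition (c) of Lemma~\ref{lem:simultaneous} lets $\delta G_0$ range over all of $\Symo(d)$. The differential of the defining equation of $\mathcal I$ is a nonzero multiple of $z^T(\delta G_0)z$, which is not identically zero on $\Symo(d)$. Hence $\mathcal I$ is smooth at $(p,w)$.
\item Taking $\delta w=0$ and $\delta G_0=H$ shows that the residual map has surjective differential onto $T_G\RR_{d-1}$.
\item Therefore the component through $(p,w)$ dominates $\RR_{d-1}$. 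The spanning condition is Zariski open and nonempty on that component, and Chevalley's theorem finishes the proof.
\end{enumerate}
This is the paper's argument. Once you have it, the factorization through $\mathcal J$ and the inversion in Step~2 are unnecessary. They would also work: the same transversality shows $F|_{\mathcal I}$ is submersive onto $\mathcal J$ at $(p,w)$.
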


\begin{proof}
Choose $p\in\mathcal P$ and a pivot $w\in Q_h$ as in the proof of
Proposition~\ref{prop:critical}.  At this choice the $A'$ moment map is
submersive, the full core Jacobians span $\OO_d$, and the residual has
rank $d-1$.  Put
\[
G=G_0(p)-ww^T
\]
and let $z$ span $\ker G$.  Because $\rank G=d-1$, the determinant has
nonzero differential at $G$, represented up to scale by $H\mapsto z^THz$.
This functional is not a scalar multiple of the trace functional, since
$zz^T$ is not a scalar multiple of $I_d$.  Hence $G$ is a smooth point of
$\RR_{d-1}$ and
\begin{equation}\label{eq:core-6}
T_G\RR_{d-1}
=
\{H\in\Sym(d):\tr H=0,\ z^THz=0\}.
\end{equation}

Consider the incidence hypersurface
\[
\mathcal I
=
\{(p',w')\in\mathcal P\times Q_h:
  \det(G_0(p')-w'w'^T)=0\}.
\]
We first verify smoothness at $(p,w)$.  Each quadric $Q_{h(a)}$ and $Q_h$
is smooth, because the gradient of $w^Tw-1$ is $2w\ne0$ on the quadric.
Thus $\mathcal P\times Q_h$ is smooth.  Since $G$ has corank one,
$\operatorname{adj}(G)$ is a nonzero scalar multiple of $zz^T$.  Holding
$w$ fixed, the differential of the defining equation in a variation
$\delta G_0$ is therefore a nonzero scalar multiple of
$z^T(\delta G_0)z$.  Submersivity of the $A'$ moment map means that the
allowed variations $\delta G_0$ fill the entire trace-zero hyperplane
$\Symo(d)$.  The functional $H\mapsto z^THz$ is not identically zero on
$\Symo(d)$, because the rank-one matrix $zz^T$ is not a scalar multiple of
$I_d$.  Hence the defining equation of $\mathcal I$ has nonzero differential
at $(p,w)$, so $\mathcal I$ is smooth there.

Let
\[
\mathcal R:\mathcal I\longrightarrow\RR_{d-1},
\qquad
\mathcal R(p',w')=G_0(p')-w'w'^T
\]
be the residual map.  Take any $H\in T_G\RR_{d-1}$.  Set
$\delta w=0$.  Because the $A'$ moment map is submersive, there is a tangent
variation $\delta p$ for which $\delta G_0=H$.  By \eqref{eq:core-6},
$z^THz=0$, so $(\delta p,0)$ satisfies the linearized incidence equation.
Thus it is tangent to $\mathcal I$ and
$d\mathcal R(\delta p,0)=H$.  Therefore $d\mathcal R$ is surjective at the
smooth point $(p,w)$.

Because $\mathcal I$ is smooth at $(p,w)$, that point lies on a unique
irreducible component; denote it by $\mathcal C$.  The differential of
$\mathcal R|_{\mathcal C}$ at $(p,w)$ has rank
$\dim T_G\RR_{d-1}=\dim\RR_{d-1}$.  Therefore the Zariski closure of
$\mathcal R(\mathcal C)$ has dimension at least $\dim\RR_{d-1}$.
Since the image is already contained in $\RR_{d-1}$, the closure must equal
$\RR_{d-1}$; equivalently, $\mathcal R|_{\mathcal C}$ is dominant.  The
condition that the core Jacobians span $\OO_d$ is Zariski open and holds at
$(p,w)$, so it defines a nonempty dense open subset
$\mathcal C^\circ\subset\mathcal C$.  Its image remains dense in the
same irreducible target.  Chevalley's theorem makes
$\mathcal R(\mathcal C^\circ)$ constructible; a dense constructible subset
of an irreducible variety contains a nonempty Zariski-open subset.  Hence a
nonempty open subset of $\RR_{d-1}$ consists of residuals arising from
critical off-diagonal cores with the required full-rank property.
\end{proof}

\paragraph{Generic unit decompositions.}

We need a unit rank-one decomposition only for a generic residual in
$\RR_{d-1}$.

Recall the quadric
\[
\mathcal Q_d=\{u\in\C^d:u^Tu=1\}.
\]
It has dimension $d-1$ and is irreducible for $d\ge4$.
Indeed, its defining quadratic polynomial is nondegenerate and cannot factor
into affine linear factors.  Put $r=d-1$ and define
\[
\Psi_r:\mathcal Q_d^r\longrightarrow\RR_{d-1},
\qquad
(u_1,\dots,u_r)\longmapsto\sum_{j=1}^r u_ju_j^T.
\]
The image lies in $\RR_{d-1}$ because every factor has unit norm, so the
sum has trace $r=d-1$, and it has rank at most $r$.

\begin{lemma}[Generic residual decomposition dominance]\label{lem:generic-completion}
The morphism $\Psi_r$ is dominant.  More precisely, there is a nonempty
Zariski-open subset of $\mathcal Q_d^r$ on which $\Psi_r$ is submersive.
\end{lemma}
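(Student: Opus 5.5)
Since $\mathcal Q_d^r$ is smooth and irreducible (a product of irreducible smooth quadrics) and $\RR_{d-1}$ is irreducible by Lemma~\ref{lem:Rirr}, it suffices to find one point $U=(u_1,\dots,u_r)$ at which $G=\Psi_r(U)$ has rank exactly $d-1$ and $d\Psi_r$ maps onto $T_G\RR_{d-1}$. Surjectivity of a differential is the nonvanishing of some maximal minor, hence a Zariski-open condition, so it then holds on a nonempty open subset. Dominance follows by the same dimension argument used in Proposition~\ref{prop:residual-dominance}: the image closure is irreducible, contained in $\RR_{d-1}$, and has dimension at least $\dim\RR_{d-1}$.

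To compute the tangent space at a rank-$(d-1)$ point, let $z$ span $\ker G$. As in \eqref{eq:core-6}, $G$ is a smooth point of $\RR_{d-1}$ with
\[
T_G\RR_{d-1}=\{H\in\Sym(d):\tr H=0,\ z^THz=0\}.
\]
This space has dimension $\binom{d+1}{2}-2$. The image of $d\Psi_r$ is $\sum_j\{\dot u u_j^T+u_j\dot u^T:\dot u\in u_j^\perp\}$. I will work with the annihilator under the trace pairing. A symmetric $S$ kills factor $j$ iff $Su_j\in\C u_j$. So it suffices to show the following: if $Su_j=\lambda_ju_j$ for all $j$, then $S\in\Span(I,zz^T)$. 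This dual statement is exactly surjectivity onto the codimension-two space above.

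For the explicit point, I take $u_j=e_j$ for $j=1,\dots,d-1$. Then $G=\operatorname{diag}(1,\dots,1,0)$, which has rank $d-1$, trace $d-1$, and $z=e_d$. The conditions $Se_j=\lambda_je_j$ force $S$ to be diagonal on the first $d-1$ coordinates and to have zero cross entries to coordinate $d$. This gives $S=\operatorname{diag}(\lambda_1,\dots,\lambda_{d-1},s)$, which is still too large, because the $\lambda_j$ are unconstrained by orthogonality.

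The main obstacle is therefore choosing the $u_j$ pairwise non-orthogonal, as in Lemma~\ref{lem:regular}. Then $(\lambda_i-\lambda_j)u_i^Tu_j=0$ forces all $\lambda_j$ to equal a common $\lambda$. So $S-\lambda I$ kills $\Span\{u_j\}=z^\perp$, and when $z$ is not isotropic a symmetric matrix vanishing on $z^\perp$ is a multiple of $zz^T$. Concretely, I will apply a small generic complex orthogonal perturbation to the $e_j$. The resulting $u_j$ are not orthonormal to each other, but they still lie on $\mathcal Q_d$ and span a non-isotropic hyperplane with $z^Tz\ne0$. For example, take $u_j=(e_j+te_{j+1})/\sqrt{1+t^2}$ for small $t\ne0$, and verify directly that consecutive $u_j$ are non-orthogonal; connectedness of the path gives equal $\lambda_j$. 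Finally I check that $z^Tz\neq0$ for a specific small $t$.
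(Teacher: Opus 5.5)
Your proposal is correct and follows essentially the paper's argument. The common route is to compute the annihilator of $D\Psi_r$ as $\{S: Su_j\in\C u_j\ \forall j\}$, use non-orthogonality of the $u_j$ to force a common eigenvalue, and identify the annihilator with $\Span\{I,zz^T\}$, which is the normal space of $T_G\RR_{d-1}$. The differences are minor: the paper uses the star configuration $u_j=(e_j+te_d)/\sqrt{1+t^2}$ (all pairs non-orthogonal) and the splitting $\C^d=V\oplus\C z$, whereas your path configuration only needs the non-orthogonality graph to be connected, and your step ``$S-\lambda I$ vanishes on $z^\perp$, hence is a multiple of $zz^T$'' in fact does not even require $z^Tz\ne0$.
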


\begin{proof}
Let $U=(u_1,\dots,u_r)$ have linearly independent columns, assume that the
Gram matrix $U^TU$ is nonsingular, and assume
\[
u_i^Tu_j\ne0\qquad(i\ne j).
\]
Such unit vectors exist explicitly.  Choose $t\in\C$ with
$t\ne0$, $1+t^2\ne0$, and $1+rt^2\ne0$, and take
\[
u_j=\frac{e_j+t e_d}{\sqrt{1+t^2}},\qquad 1\le j\le r.
\]
Then $u_i^Tu_j=t^2/(1+t^2)\ne0$ for $i\ne j$, and the Gram matrix has
eigenvalues $(1+t^2)^{-1}$ with multiplicity $r-1$ and
$(1+rt^2)/(1+t^2)$ with multiplicity one.  Hence it is nonsingular.

The differential is
\[
D\Psi_r(\delta u_1,\dots,\delta u_r)
=
\sum_j(u_j\delta u_j^T+\delta u_ju_j^T),
\qquad
u_j^T\delta u_j=0.
\]
A symmetric matrix $S$ annihilates its image if and only if
$Su_j\in\C u_j$ for every $j$.  Write $Su_j=\lambda_j u_j$.  Symmetry of $S$
gives
\[
(\lambda_i-\lambda_j)u_i^Tu_j=0,
\]
so all $\lambda_j$ are equal, say to $\lambda$.  Because $U^TU$ is
nonsingular, $V=\Span\{u_j\}$ is a nondegenerate $(d-1)$-plane.  Hence
\[
\C^d=V\oplus V^\perp,
\qquad V^\perp=\C z,
\]
with $z^Tz\ne0$.  We have already shown that $S$ acts by the same scalar
$\lambda$ on every $u_j$, and therefore on all of $V$.  Symmetry of $S$
forces $V^\perp$ to be invariant, say $Sz=\mu z$.  Thus
\[
S=\lambda I+\frac{\mu-\lambda}{z^Tz}zz^T,
\]
and the annihilator of $D\Psi_r$ is exactly
\begin{equation}\label{eq:core-7}
\Span\{I,zz^T\}.
\end{equation}

Now $G=UU^T$ has rank $r=d-1$, trace $r$, and kernel $V^\perp=\C z$.
Near such a point the residual variety is cut out inside the trace-$r$
hyperplane by the determinant equation.  Since
$\operatorname{adj}(G)$ is a nonzero scalar multiple of $zz^T$, its tangent
space is
\[
T_G\RR_{d-1}
=
\{H\in\Sym(d):\tr H=0,\ z^THz=0\},
\]
whose normal space under the trace pairing is precisely the two-dimensional
space in \eqref{eq:core-7}.  Because the image of $D\Psi_r$ is contained in this tangent
space and has the same annihilator, the two spaces are equal.  Hence
$\Psi_r$ is submersive at $U$ and is therefore dominant.
\end{proof}

\paragraph{The spanning-tree block.}

\begin{lemma}[Generic tree-compatible decomposition]\label{lem:treecompatible}
Let $T_{\mathrm{gr}}$ be a spanning tree on $[d]$.  There is a nonempty
Zariski-open subset
$\RR_{T_{\mathrm{gr}}}^\circ\subseteq\RR_{d-1}$ such that every
$G\in\RR_{T_{\mathrm{gr}}}^\circ$ admits a decomposition
\[
G=\sum_{t\in T_{\mathrm{gr}}}u_tu_t^T,
\qquad
u_t^Tu_t=1,
\]
with
\[
u_{t,i}u_{t,j}\ne0
\qquad
(t=\{i,j\}\in T_{\mathrm{gr}}).
\]
\end{lemma}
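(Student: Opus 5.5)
The plan is to deduce the lemma from Lemma~\ref{lem:generic-completion}. Index the $r=d-1$ factors of $\Psi_r$ by the edges of $T_{\mathrm{gr}}$, so that
\[
\Psi_r:\mathcal Q_d^{T_{\mathrm{gr}}}\to\RR_{d-1},\qquad (u_t)_t\mapsto\sum_t u_tu_t^T .
\]
Consider the subset
\[
\mathcal V=\{(u_t)_t\in\mathcal Q_d^{T_{\mathrm{gr}}}: u_{t,i}u_{t,j}\ne0\ \text{for every } t=\{i,j\}\in T_{\mathrm{gr}}\}.
\]
It is a finite intersection of complements of hypersurfaces in the irreducible variety $\mathcal Q_d^{r}$. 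Each product $\mathcal Q_d^r$ is irreducible because $\mathcal Q_d$ is irreducible for $d\ge4$.

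First I check that each condition is proper, so that $\mathcal V$ is a nonempty Zariski-open dense subset. For a fixed edge $\{i,j\}$ one can take $u=(e_i+e_j+e_k)/\sqrt3$ with $k\ne i,j$, which has $u_iu_j\ne0$. Next, Lemma~\ref{lem:generic-completion} gives a nonempty open set $\mathcal W\subseteq\mathcal Q_d^r$ on which $\Psi_r$ is submersive. By irreducibility, $\mathcal V\cap\mathcal W$ is nonempty and open, hence dense. The restriction of $\Psi_r$ to this set is therefore dominant onto $\RR_{d-1}$, which is irreducible by Lemma~\ref{lem:Rirr}. It is dominant either because its image contains a point where the differential is onto $T_G\RR_{d-1}$ of full dimension, or simply because a dense open subset of the source of a dominant map still has dense image.

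Chevalley's theorem then makes $\Psi_r(\mathcal V)$ a dense constructible subset of the irreducible variety $\RR_{d-1}$. Such a subset contains a nonempty Zariski-open set, and I take this set as $\RR_{T_{\mathrm{gr}}}^\circ$. By construction, every $G$ in it equals $\sum_t u_tu_t^T$ with unit $u_t$ satisfying the required nonvanishing of $u_{t,i}u_{t,j}$.

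The only step requiring care is the dominance of $\Psi_r$ restricted to $\mathcal V$. This is not an obstacle, since it reduces to irreducibility of the source together with Lemma~\ref{lem:generic-completion}. I therefore expect the proof to be short: essentially the open-image argument already used in Proposition~\ref{prop:residual-dominance}.
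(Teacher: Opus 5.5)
Your proposal is correct and follows essentially the same route as the paper. Both arguments take the open set where the tree-edge coordinate products are nonzero and intersect it with the submersion locus of Lemma~\ref{lem:generic-completion}, using irreducibility of $\mathcal Q_d^{r}$. Both then apply Chevalley's theorem in the irreducible target $\RR_{d-1}$. The only cosmetic difference is your nonemptiness witness $(e_i+e_j+e_k)/\sqrt3$ in place of the paper's $(e_i+e_j)/\sqrt2$.
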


\begin{proof}
Label the $r=d-1$ factors of $\mathcal Q_d^r$ by the edges of
$T_{\mathrm{gr}}$.  The conditions $u_{t,i}u_{t,j}\ne0$ define a
Zariski-open subset $\mathcal U_{\mathrm{gr}}$.  It is nonempty: for each
tree edge $t=\{i,j\}$, the choice
\[
u_t=\frac{e_i+e_j}{\sqrt2}
\]
belongs to $\mathcal Q_d$ and satisfies $u_{t,i}u_{t,j}=1/2$.  Since
$\mathcal Q_d^r$ is irreducible, this nonempty open subset intersects the
nonempty submersion open set from Lemma~\ref{lem:generic-completion}.  Hence
the restriction of $\Psi_r$ to $\mathcal U_{\mathrm{gr}}$ remains dominant.
Its image is dense constructible in the irreducible target $\RR_{d-1}$ and
therefore contains a nonempty Zariski-open subset
$\RR_{T_{\mathrm{gr}}}^\circ$.
\end{proof}

\begin{theorem}[Basis realization]\label{thm:basisrealization}
Let $H$ be an $\Nd$-edge multigraph on $[d]$ satisfying
\[
|B|\le \rho_d(B)
\qquad\text{for every nonempty edge submultiset }B\subseteq E(H).
\]
Then there exists $W\in X_{d,\Nd}$ with edge labels prescribed by $H$ for which
the $\Nd$ restricted Jacobian columns form a basis of $\Symo(d)$.  Consequently
the same basis property holds on a nonempty Zariski-open subset of
$X_{d,\Nd}$.
\end{theorem}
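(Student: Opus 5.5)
Fix the partition $E(H)=A\sqcup T_{\mathrm{gr}}$ from Lemma~\ref{lem:partition}, with $|A|=\binom d2$ and $T_{\mathrm{gr}}$ a spanning tree. The plan is to assemble a frame in two blocks: a critical off-diagonal core on the columns labelled by $A$, and a tree block on the columns labelled by $T_{\mathrm{gr}}$.

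\textbf{Step 1: the two blocks.} By Proposition~\ref{prop:residual-dominance}, residuals $G=\frac{\Nd}{d}I-\sum_{a\in A}w_aw_a^T$ of cores with $\Span\{q_a:a\in A\}=\OO_d$ fill a nonempty open subset of $\RR_{d-1}$. By Lemma~\ref{lem:treecompatible}, a generic $G\in\RR_{d-1}$ decomposes as $G=\sum_{t\in T_{\mathrm{gr}}}u_tu_t^T$ with unit $u_t$ and $u_{t,i}u_{t,j}\ne0$ for $t=\{i,j\}$. Since $\RR_{d-1}$ is irreducible (Lemma~\ref{lem:Rirr}), these two open sets meet. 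Choose $G$ in the intersection, together with a core realizing it and a tree decomposition of it. Setting $w_t=u_t$, the columns form $W$ with $WW^T=\frac{\Nd}{d}I$ and all columns unit, so $W\in X_{d,\Nd}$ with the labels of $H$.

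\textbf{Step 2: the basis property.} The core Jacobians already span $\OO_d$, so it suffices to show that
\[
P_{\DD_d}q_t,\qquad t\in T_{\mathrm{gr}},
\]
span $\DD_d$. A direct computation gives $P_{\DD_d}[J_{ij},ww^T]=\pm 2w_iw_j D_{ij}$. Since $u_{t,i}u_{t,j}\ne0$, these are nonzero multiples of the incidence vectors $D_{ij}$ along the spanning tree, and those are independent with span $\DD_d$ (graphic matroid). In block form relative to $\Symo(d)=\OO_d\oplus\DD_d$, the $\Nd\times\Nd$ coefficient matrix is block triangular after a column operation: the $A$-columns project onto $\OO_d$ with full rank, and modulo $\OO_d$ the tree columns are independent in $\DD_d$. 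Hence the determinant is nonzero and the $\Nd$ restricted Jacobians form a basis of $\Symo(d)$.

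\textbf{Step 3: openness.} The basis condition is the nonvanishing of a determinant, a polynomial in the entries of $W$. It holds at one point of the irreducible variety $X_{d,\Nd}$ (irreducible by \cite[Theorem~3.3]{BFR2020}, since $\Nd\ge d+2$). It therefore holds on a nonempty Zariski-open subset.

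\textbf{Main obstacle.} The subtle point is Step 1: the core and the tree must produce the \emph{same} residual $G$ while both retain their genericity properties. This is exactly what the dominance statements and the irreducibility of $\RR_{d-1}$ supply, so the remaining care is to verify the diagonal projection formula and the sign conventions for $D_{ij}$.
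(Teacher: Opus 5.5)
Your proposal is correct and follows the paper's proof essentially step for step. You fix the partition from Lemma~\ref{lem:partition}, intersect the core-residual open set of Proposition~\ref{prop:residual-dominance} with the tree-compatible open set of Lemma~\ref{lem:treecompatible} inside the irreducible $\RR_{d-1}$, and use $P_{\DD_d}[J_{ij},u_tu_t^T]=\pm2u_{t,i}u_{t,j}D_{ij}$ along the spanning tree to complete the core basis of $\OO_d$ to a basis of $\Symo(d)$. One cosmetic point: no column operation is needed for the block-triangular form, because the zero-branch core Jacobians already lie in $\OO_d$ and span it.
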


\begin{proof}
By Lemma~\ref{lem:partition}, choose and fix a partition
\[
E(H)=A\sqcup T_{\mathrm{gr}},
\]
where $A$ is a basis of $M_{\off}$ and $T_{\mathrm{gr}}$ is a spanning tree.
For this choice of $A$, Proposition~\ref{prop:residual-dominance} gives a
nonempty open subset of $\RR_{d-1}$ arising from critical off-diagonal cores.
For this choice of $T_{\mathrm{gr}}$, Lemma~\ref{lem:treecompatible} gives
another nonempty open subset consisting of tree-compatible residuals.  Since
$\RR_{d-1}$ is irreducible, these two opens intersect.

Choose $G$ in their intersection.  The set $A$ consists of the original edge
copies of $H$ assigned to the off-diagonal block; each core column retains
its original missing-pair label $e_a$.  The critical core satisfies
\[
\Span\{q_a:a\in A\}=\OO_d,\qquad
\sum_{a\in A}w_aw_a^T=\frac{\Nd}{d}I-G.
\]
Likewise, $T_{\mathrm{gr}}$ is the set of original edge copies assigned
to the graphic block.  For each $t\in T_{\mathrm{gr}}$, write
$e_t=\{i,j\}$ for its original missing-pair label and assign to that column
the corresponding vector $u_t$ from a tree-compatible decomposition
\[
G=\sum_{t\in T_{\mathrm{gr}}}u_tu_t^T.
\]
Then
\[
P_{\DD_d}[J_{ij},u_tu_t^T]=\pm2u_{t,i}u_{t,j}D_{ij}.
\]
All weights are nonzero, and $\{D_{ij}:e_t=\{i,j\},\ t\in T_{\mathrm{gr}}\}$ is a basis
of $\DD_d$.  Indeed, after identifying trace-zero diagonal matrices with
vectors $x\in\C^d$ satisfying $\sum_i x_i=0$, these $D_{ij}$ are the oriented
edge-incidence vectors of the spanning tree $T_{\mathrm{gr}}$.  Thus the tree
Jacobians form a basis modulo the core span, and
all $\Nd$ Jacobian columns form a basis of
$\Symo(d)=\OO_d\oplus\DD_d$.  Every core and tree column has bilinear norm
one, and the two displayed moment identities give
\[
\sum_{a\in A}w_aw_a^T+\sum_{t\in T_{\mathrm{gr}}}u_tu_t^T=\frac{\Nd}{d}I.
\]
Hence the union of the labelled core and tree columns is a point of
$X_{d,\Nd}$ with exactly the original missing-pair labels of $H$.  The determinant of these $\Nd$ Jacobian
columns is therefore a nonzero regular function on $X_{d,\Nd}$, so its
nonvanishing locus is a nonempty Zariski-open subset.
\end{proof}

\section{High-redundancy rank stabilization and completion fibers}\label{sec:rankfiber}

We now transport the critical-length realization to every frame length
$R\ge\Nd$.  Here a coordinate set is called \emph{spanning} if its rank in
the algebraic matroid of $X_{d,R}$ equals $\dim X_{d,R}$; equivalently, its
coordinate projection is generically finite onto its image.  The only
additional input is the following stability fact for spanning coordinate
sets.

\begin{lemma}[Spanning stability under a fully observed column]\label{lem:column-extension}
Let $R\ge d+2$, and let $E\subseteq[d]\times[R]$ be spanning in
$X_{d,R}$.  Then
\[
E^+:=E\cup\{(1,R+1),\ldots,(d,R+1)\}
\]
is spanning in $X_{d,R+1}$.
\end{lemma}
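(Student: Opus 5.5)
We reduce spanning to a rank statement for the differential of the coordinate projection at a suitable smooth point, and then analyze the new column's contribution. By Lemma~\ref{lem:regular}, both $X_{d,R}$ and $X_{d,R+1}$ are irreducible and smooth on the open loci $X^\circ$, with
\[
\dim X_{d,R+1}-\dim X_{d,R}=d-1 .
\]
A coordinate set $E$ is spanning exactly when the coordinate projection $\pi_E$ has injective differential on $T_WX$ at a generic smooth point $W$. By generic smoothness (characteristic zero), this is equivalent to generic finiteness of $\pi_E$. So it suffices to exhibit one point $W^+\in X^\circ_{d,R+1}$ at which
\[
\ker D\pi_{E^+}|_{W^+}\cap T_{W^+}X_{d,R+1}=0 .
\]

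Next we compute this kernel. Let $(\dot w_1,\dots,\dot w_{R+1})$ be a tangent vector killed by $D\pi_{E^+}$. Since every coordinate of column $R+1$ is observed, $\dot w_{R+1}=0$. The tangent equations of Lemma~\ref{lem:regular} then reduce to
\[
w_a^T\dot w_a=0\quad(a\le R),\qquad \sum_{a\le R}(w_a\dot w_a^T+\dot w_aw_a^T)=0,
\]
with $\dot w_a$ vanishing on the coordinates of $E$. These are exactly the tangent equations at the truncated matrix $W=[w_1\cdots w_R]$ for the variety with moment $\sum_{a\le R}w_aw_a^T=\frac{R+1}{d}I-w_{R+1}w_{R+1}^T$. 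This variety is not $X_{d,R}$: its moment target is a general symmetric matrix rather than a scalar.

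The main obstacle is therefore to compare the level set $\{\sum w_aw_a^T=G\}$ for $G$ near a scalar matrix with $X_{d,R}$ itself. I would handle it by rescaling. Choose $w_{R+1}$ with $w_{R+1}^Tw_{R+1}=1$ so that
\[
G=\tfrac{R+1}{d}I-w_{R+1}w_{R+1}^T
\]
is invertible. Using complex symmetric square roots, $G=\tfrac{R}{d}C C^T$ with $C$ near a complex orthogonal matrix. The map $w_a\mapsto C^{-1}w_a$ sends the $G$-level set to the scalar level set, but it destroys the unit-norm equations. So instead I would argue by a dimension and semicontinuity count. Consider the family over the space of moment targets $G$ of trace $R$, namely
\[
\mathcal Y=\{(W,G):\ w_a^Tw_a=1,\ \textstyle\sum_a w_aw_a^T=G\},
\]
and the relative coordinate projection. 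The rank of $D\pi_E$ restricted to fiber tangent spaces is lower semicontinuous on $\mathcal Y$. At points over the scalar target, generic $W\in X^\circ_{d,R}$ gives injectivity, because $E$ is spanning. Smoothness of the fibers at such points follows from the surjectivity argument of Lemma~\ref{lem:regular}, which did not use scalarity of the target. Hence injectivity persists for $(W,G)$ in an open neighbourhood in $\mathcal Y$.

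Finally, since the map $w\mapsto\tfrac{R+1}{d}I-ww^T$ from $\mathcal Q_d$ into the trace-$R$ hyperplane does not reach the scalar point, the neighbourhood must be reached by the opposite route. I would first fix a generic frame $W^+\in X^\circ_{d,R+1}$. Then I would use the $O(d,\C)$-action together with the irreducibility of $X_{d,R+1}$ to show that the set of $W^+$ whose truncation $W$ is an injectivity point of the $G$-family is nonempty and open. Concretely, openness comes from semicontinuity, and nonemptiness requires one good point. For that point I would take $W^+$ near a degenerate configuration in which the last column is an infinitesimal perturbation, namely a limit along $\mathcal Y$ approaching the scalar fiber. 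If this limiting argument fails, the fallback is the reference the paper already names, \cite[Remark~4.6]{BFR2020}, which presumably proves exactly this statement by a transcendence-degree argument over the extended function field.
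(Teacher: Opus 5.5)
Your reduction is sound. Because column $R+1$ is fully observed, the kernel of $D\pi_{E^+}$ at $W^+=[W,u]$ consists of those $\dot W$ that vanish on $E$ and satisfy $w_a^T\dot w_a=0$ and $\sum_{a\le R}(w_a\dot w_a^T+\dot w_aw_a^T)=0$. This is a condition on the truncation $W$ alone, and one point of $X_{d,R+1}$ where the system is trivial would suffice.

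\emph{Producing that one point is exactly the step you do not supply, and the mechanisms you propose for it cannot work.} Every truncation has moment in $\Gamma=\{\tfrac{R+1}{d}I-uu^T:u^Tu=1\}$. Since $uu^T$ is then a rank-one idempotent of trace one, $\Gamma$ is a closed $(d-1)$-dimensional subvariety of the trace-$R$ hyperplane that does not contain $\tfrac Rd I$. Hence the closure of the set of truncations is disjoint from $X_{d,R}$. There is no configuration in which the last column is an infinitesimal perturbation, since the unit-norm equation forbids it, and no limit along $\mathcal Y$ reaches the scalar fiber.

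Your semicontinuity step does give a dense open good set in $\mathcal Q_d^R$. But the truncations form a closed subvariety of positive codimension, and a dense open set can miss such a subvariety entirely; ruling that out is the whole content of the lemma.

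The $O(d,\C)$-action does not repair this. It preserves $X_{d,R+1}$ and $\Gamma$, but it mixes rows, so it does not preserve the condition $\dot W|_E=0$ and does not carry good points to good points. Irreducibility of $X_{d,R+1}$ only upgrades one good point to a dense open set; it cannot produce the first one. Incidentally, irreducibility comes from Theorem~3.3 of Bernstein--Farnsworth--Rodriguez, not from Lemma~\ref{lem:regular}.

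So your direct argument is not a proof, and the only part of the proposal that establishes the lemma is the fallback. That fallback is precisely the paper's proof. The paper simply invokes Remark~4.6 of Bernstein--Farnsworth--Rodriguez, after checking that their variety coincides with $X_{d,R}$ and that every length used lies in their irreducible range $r\ge n+2>4$.

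For contrast, your strategy does succeed if you adjoin a block of $s\ge d$ fully observed columns. Choose the block in $X_{d,s}\neq\varnothing$, so that $[W,W']\in X_{d,R}\times X_{d,s}\subseteq X_{d,R+s}$ with $W$ generic in $X_{d,R}$. At this point the kernel is contained in the kernel of $D\pi_E$ on $T_WX_{d,R}$, hence is zero. The point is therefore isolated in its fiber, and the fiber-dimension theorem on the irreducible $X_{d,R+s}$ gives spanning.

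For a single column no such tight splitting exists, because $X_{d,s}=\varnothing$ for $s<d$. The paper needs every $R\ge N_d$, not only $R=N_d$ and $R\ge N_d+d$, and this is why the one-column statement requires the external input.
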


\begin{proof}
This is the column-extension observation of
Bernstein--Farnsworth--Rodriguez \cite[Remark~4.6]{BFR2020}.  Their variety
$X_{n,r}$ is the same complex algebraic FUNTF variety used here: it is the
Zariski closure in $\C^{n\times r}$ defined by the unit-norm and tight-frame
equations.  The cited remark states that a spanning coordinate set in
$X_{n,r}$ remains spanning in $X_{n,r+1}$ after all $n$ coordinates of the
new column are adjoined.  We use only this forward implication.  In our
application $d\ge4$ and $R\ge\Nd\ge d+2$, so every iteration lies in the
irreducible range $r\ge n+2>4$ of \cite[Theorem~3.3]{BFR2020}.
\end{proof}

\begin{proposition}[Stabilized generic restricted rank]\label{prop:genericrank}
Let $R\ge\Nd$ and let $S\subseteq[R]$ be any set of frame columns with
prescribed edge labels.  For generic $W\in X_{d,R}$,
\[
\rank\{q_a(W):a\in S\}
=
\min_{T\subseteq S}
\bigl(|S\setminus T|+\rho_d(T)\bigr).
\]
Moreover all subset ranks agree simultaneously with the corresponding Rado
rank function on a single nonempty Zariski-open subset of $X_{d,R}$.
\end{proposition}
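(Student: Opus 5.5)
The upper bound $\rank\{q_a(W):a\in S\}\le r_d(S)$ holds for every $W$. For each $T\subseteq S$, the vectors $q_a$ with $a\in T$ lie in $\sum_{a\in T}L_{e_a}$. By Lemma~\ref{lem:centralizer} this space has dimension $\rho_d(T)$. The remaining $|S\setminus T|$ vectors add at most $|S\setminus T|$ to the rank. The work is therefore the lower bound. By Rado's theorem the right-hand side is the rank function of the Rado matroid $\mathcal M$ of the family $\{L_{e_a}\}_{a\in S}$. It therefore suffices to show the following. For every $\mathcal M$-independent subfamily $I\subseteq S$, the vectors $\{q_a(W):a\in I\}$ are linearly independent on a nonempty Zariski-open subset $\mathcal U_I$ of $X_{d,R}$. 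Taking $\bigcap_I\mathcal U_I$ over the finitely many $I\subseteq S$ then gives one open set, nonempty because $X_{d,R}$ is irreducible. On it, every $S'\subseteq S$ contains an $\mathcal M$-independent subset of size $r_d(S')$ whose Jacobians are independent, which yields the simultaneous statement.

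Fix an $\mathcal M$-independent $I$, so $|I|\le\Nd$. I would first extend $I$ by basis extension in the Rado matroid of all edge labels on $[d]$, whose rank is $\dim\sum_e L_e=\rho_d(K_d)=\Nd$. This gives a multiset $I\sqcup F$ of exactly $\Nd$ labelled edges that is a basis, where the added labels $F$ are auxiliary. A basis of size $\Nd$ is Rado-independent, so $|B|\le\rho_d(B)$ for every submultiset $B$, and it is admissible. Theorem~\ref{thm:basisrealization} then gives a generic $W_0\in X_{d,\Nd}$ whose $\Nd$ Jacobians form a basis of $\Symo(d)$. By Lemma~\ref{lem:tangent}, with one column per label, the projection forgetting these $2\Nd$ coordinates has generically zero-dimensional kernel. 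So the observed coordinate set $E$ is spanning in $X_{d,\Nd}$.

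Next I would move to length $R$. If $R=\Nd$, the labels $I$ can be placed on the columns of $S$, and I would first reduce to the case $R=\Nd$ by this placement. For $R>\Nd$, iterating Lemma~\ref{lem:column-extension} shows that $E$ together with $R-\Nd$ fully observed columns is spanning in $X_{d,R}$. Lemma~\ref{lem:tangent} converts this back: on a generic $W\in X_{d,R}$, the $\Nd$ Jacobians of the partially observed columns have trivial kernel and are independent. Their subfamily indexed by $I$ is then independent too. Independence is a minor involving only the columns labelled by $I$ and is invariant under permuting columns, since $X_{d,R}$ is $S_R$-invariant. So the labelled columns of $I$ can be moved to their actual positions in $S$, and I obtain a nonzero minor of $\{q_a:a\in I\}$. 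Its nonvanishing locus is $\mathcal U_I$.

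Finally, the fiber formula follows from Lemma~\ref{lem:tangent}. At generic $W$ the kernel of $D\pi_S$ has dimension $|S|-r_d(S)$. By generic smoothness (characteristic zero) this equals the generic fiber dimension. Then
\[
|S|-\min_T\bigl(|S\setminus T|+\rho_d(T)\bigr)=\max_T\bigl(|T|-\rho_d(T)\bigr).
\]
The step I expect to be most delicate is the transport in the second and third paragraphs. There I must check that the spanning property transfers faithfully between ``spanning coordinate set'' and ``full-rank Jacobian family'' in both directions via Lemma~\ref{lem:tangent}. The dimension count $\dim X_{d,R}$ versus observed coordinates needs care, and the open conditions of Lemma~\ref{lem:tangent} must hold generically at each length.
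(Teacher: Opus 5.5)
Your proposal is correct and follows the paper's proof essentially step for step: the Rado upper bound, extension of an independent $I$ to an auxiliary $\Nd$-element basis, realization via Theorem~\ref{thm:basisrealization}, transport to length $R$ by Lemma~\ref{lem:column-extension}, and a minor involving only the columns of $I$, moved into place by a column permutation. Two small points should be stated explicitly. First, your claim that the ambient Rado matroid has rank $\Nd$ needs enough copies of each edge; the paper uses an explicit reservoir of $\Nd$ copies of each edge of $K_d$. Second, passing from ``spanning in $X_{d,R}$'' back to an injective differential uses generic smoothness in characteristic zero, not Lemma~\ref{lem:tangent} alone, as in the paper's Step~2.
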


\begin{proof}
Since $q_a(W)\in L_{e_a}$, for every $T\subseteq S$,
\[
\rank\{q_a(W):a\in S\}
\le
|S\setminus T|+\dim\sum_{a\in T}L_{e_a}
=
|S\setminus T|+\rho_d(T).
\]
Taking the minimum gives the upper bound.

For the reverse inequality we separate the argument into three steps.  The
auxiliary edge labels introduced below are used only to manufacture a witness
point; the final polynomial minor will involve only the original independent
subfamily and its original labels.

\smallskip\noindent\emph{Step 1: extend to a critical Rado basis.}
Let $I\subseteq S$ be a maximum independent set
of the Rado matroid induced by the subspaces $\{L_{e_a}\}_{a\in S}$.
Rado's rank formula gives
\[
|I|=
\min_{T\subseteq S}
\bigl(|S\setminus T|+\rho_d(T)\bigr),
\]
and every $J\subseteq I$ satisfies $|J|\le\rho_d(J)$.

Adjoin to $I$ a finite reservoir of $\Nd$ fresh labelled copies of each
edge of $K_d$, carrying the corresponding subspaces $L_e$.  Its Rado rank
is $\Nd$: distinct copies can represent the basis
\[
\{S_{ij}:1\le i<j\le d\}
\cup
\{D_{1j}:2\le j\le d\}
\quad\text{of }\Symo(d),
\]
since $S_{ij},D_{ij}\in L_{ij}$, and the ambient dimension is $\Nd$.

By the basis-extension axiom, $I$ extends to a reservoir basis $B$ of
cardinality $\Nd$.  Hence every $C\subseteq B$ satisfies
\[
|C|\le\dim\sum_{a\in C}L_{e_a}=\rho_d(C).
\]
Choose $\Nd$ distinct frame-column positions containing all positions of
$I$, assign the added edge copies to the remaining chosen positions, and
keep the original labels on $I$.  No compatibility with the original labels
on $S\setminus I$ is required.  The auxiliary labels serve only to construct
one point at which a Jacobian minor involving the columns of $I$ is nonzero.
That minor depends only on the frame entries in the columns indexed by $I$
and on their original labels $e_a$, $a\in I$; labels assigned to auxiliary
columns do not occur in it.

\smallskip\noindent\emph{Step 2: realize the auxiliary basis and transport it to length $R$.}
At the critical length $\Nd$, Theorem~\ref{thm:basisrealization} gives a
nonempty open set on which the $B$-labelled restricted Jacobians form a basis
of $\Symo(d)$.  Intersect
this open set with $X_{d,\Nd}^{\circ}$ from Lemma~\ref{lem:regular}.  If both missing coordinates of one column vanished, its restricted Jacobian
would be zero; hence the basis property also guarantees that every missing
pair is nonzero.  By Lemma~\ref{lem:tangent}, the Jacobian-basis property gives zero kernel for
the observed-coordinate differential.  Thus the corresponding coordinate
set is spanning in $X_{d,\Nd}$.

After a column permutation, regard these $\Nd$ positions as the first
$\Nd$ columns.  Apply Lemma~\ref{lem:column-extension} successively
$R-\Nd$ times, observing every coordinate in each added column.  We obtain a
spanning coordinate set in $X_{d,R}$ whose only missing entries are the two
prescribed coordinates in the $B$-labelled columns.

Since this projection $\eta_B$ is generically finite onto its image and
the ground field has characteristic zero, its differential is injective on
a nonempty open subset of the smooth locus.  Intersect this subset with the
open set in Lemma~\ref{lem:tangent}.  At a point of the intersection,
$\ker D\eta_B=0$ implies that the $\Nd$ vectors $q_b(W)$, $b\in B$, are
independent.  In particular the subfamily indexed by $I$ is independent at
some point of $X_{d,R}$.

\smallskip\noindent\emph{Step 3: return to the original labelled pattern.}
An $|I|\times|I|$ minor certifying this independence is a polynomial in
the frame entries of the columns in $I$ and their original labels only.
The auxiliary construction shows that this polynomial does not vanish
identically on $X_{d,R}$; changing labels outside $I$ does not change it.
Its nonvanishing locus therefore gives generic independence for the
original pattern.

The generic rank on $S$ is therefore at least $|I|$, proving equality.
Apply the same argument to every subset of $S$.  Since $X_{d,R}$ is
irreducible, the finitely many resulting nonempty Zariski-open sets have
nonempty intersection, which gives simultaneous validity of all subset ranks.
\end{proof}

\begin{proposition}[Stabilized generic fiber dimension]\label{prop:fiberdim}
Let $R\ge\Nd$.  Let $\pi_S$ forget exactly the two coordinates labelled by
$e_a$ in each column $a\in S$ and observe every other coordinate.  Then the
generic fiber dimension is
\[
\dim \pi_S^{-1}(\pi_S(W))
=
\max_{T\subseteq S}\bigl(|T|-\rho_d(T)\bigr).
\]
\end{proposition}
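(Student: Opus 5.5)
The plan is to compute the fiber dimension through its tangent space at a generic point, and then convert the stabilized rank formula of Proposition~\ref{prop:genericrank} into a kernel dimension. First I would fix the generic locus. I would take the intersection of three nonempty Zariski-open subsets of the irreducible variety $X_{d,R}$: the regular set $X_{d,R}^{\circ}$ of Lemma~\ref{lem:regular}, the open set of Lemma~\ref{lem:tangent} on which no missing pair vanishes, and the open set of Proposition~\ref{prop:genericrank} on which $\rank\{q_a(W):a\in S\}=r_d(S)$. At any $W$ in this intersection, Lemma~\ref{lem:tangent} identifies $\ker D\pi_S|_W$ with the space of linear relations among the $q_a(W)$. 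Hence
\[
\dim\ker D\pi_S|_W=|S|-r_d(S).
\]

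Second, I would relate this kernel dimension to the fiber dimension. Let $X=X_{d,R}$ and $Y_S=\overline{\pi_S(X)}$. The map $\pi_S:X\to Y_S$ is dominant between irreducible varieties over a field of characteristic zero. By generic smoothness there is a nonempty open $U\subseteq X$ with two properties: $\pi_S(U)$ lies in the smooth locus of $Y_S$, and $D\pi_S|_W$ has rank $\dim Y_S$ there. By the fiber-dimension theorem we may shrink $U$ so that every fiber through $U$ has pure dimension $\dim X-\dim Y_S$ near $W$. Since $X$ is smooth on $U$, we get
\[
\dim \pi_S^{-1}(\pi_S(W))=\dim X-\rank D\pi_S|_W=\dim\ker D\pi_S|_W .
\]
Intersecting $U$ with the first open set gives a fiber dimension of $|S|-r_d(S)$. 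The image of a dense open subset of $X$ contains a dense open subset of $Y_S$ by Chevalley's theorem, so this is the generic fiber dimension in the sense of the introduction.

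Third, I would carry out the combinatorial rewriting. Expanding the definition of $r_d(S)$ gives
\[
|S|-r_d(S)=|S|-\min_{T\subseteq S}\bigl(|S\setminus T|+\rho_d(T)\bigr)
=\max_{T\subseteq S}\bigl(|T|-\rho_d(T)\bigr).
\]
The second equality uses $|S|-|S\setminus T|=|T|$. The term $T=\varnothing$ contributes $0$, consistent with the convention $\rho_d(\varnothing)=0$.

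I expect the main obstacle to be the second step, namely justifying that the tangent-kernel dimension equals the actual fiber dimension. The concern is that fibers could be nonreduced, or could acquire extra components through nongeneric points. I would handle it cleanly by restricting to the open set on which the differential attains its generic rank. On that set the fibers are smooth of the expected dimension, and only the dimension of the component through $W$ matters. If needed, I would argue via upper semicontinuity of fiber dimension: every fiber has dimension at least $\dim X-\dim Y_S$, while the tangent bound caps the fiber dimension at $W$ by the kernel dimension.
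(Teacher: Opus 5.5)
Your proposal is correct and follows the paper's proof essentially step for step. You identify $\ker D\pi_S$ with the relation space of the $q_a(W)$ via Lemma~\ref{lem:tangent} on the open set where Proposition~\ref{prop:genericrank} gives rank $r_d(S)$, use generic smoothness in characteristic zero to equate this kernel dimension with $\dim X_{d,R}-\dim Y_S$ and hence with the generic fiber dimension, and finish with the rewriting $|S|-r_d(S)=\max_{T\subseteq S}(|T|-\rho_d(T))$; your extra appeal to the fiber-dimension theorem and Chevalley's theorem only makes explicit what the paper leaves implicit.
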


\begin{proof}
Work on the nonempty open subset of $X_{d,R}^{\circ}$ where
Proposition~\ref{prop:genericrank} has its generic rank and the hypotheses
of Lemma~\ref{lem:tangent} hold.
By Lemma~\ref{lem:tangent},
\[
\dim\ker D\pi_S
=|S|-\rank\{q_a(W):a\in S\}
=|S|-r_d(S).
\]

Let $Y=Y_S=\overline{\pi_S(X_{d,R})}$.  Since $R\ge\Nd\ge d+2$,
$X_{d,R}$ is irreducible by \cite[Theorem~3.3]{BFR2020}.  For a variety $Z$, write $Z^{\mathrm{sm}}$ for its smooth locus.
The restriction
\[
\pi_S:\;X_{d,R}^{\mathrm{sm}}\cap\pi_S^{-1}(Y^{\mathrm{sm}})
\longrightarrow Y^{\mathrm{sm}}
\]
is dominant.  By generic smoothness in characteristic zero, after shrinking
this source it is smooth.  On that dense open subset its differential has
rank $\dim Y$.  Hence
\[
\dim\ker D\pi_S=\dim X_{d,R}-\dim Y,
\]
which is also the generic fiber dimension by the fiber-dimension theorem.
Finally,
\[
|S|-r_d(S)
=
\max_{T\subseteq S}\bigl(|T|-\rho_d(T)\bigr).
\]
\end{proof}

\begin{proof}[Proof of Theorem~\ref{thm:main}]
The rank assertion is Proposition~\ref{prop:genericrank}, and the
fiber-dimension assertion is Proposition~\ref{prop:fiberdim}.
\end{proof}

\begin{proof}[Proof of Corollary~\ref{cor:finite}]
The generic fiber is finite exactly when its dimension is zero.  By
Theorem~\ref{thm:main}, this is equivalent to
$|T|-\rho_d(T)\le0$ for every $T\subseteq S$.
\end{proof}

\begin{proof}[Proof of Corollary~\ref{cor:squarebasis}]
The observed set has cardinality $\Nd(d-2)=\dim X_{d,\Nd}$
\cite[Theorem~3.3]{BFR2020}.  Its projection is generically finite exactly
under the inequalities of Corollary~\ref{cor:finite}.  In that case its
image closure has the dimension of the ambient affine coordinate space
and hence is the whole space.  Thus the observed coordinates are both
spanning and algebraically independent, so form a basis.  Conversely, a
basis is spanning and its coordinate projection is generically finite.
\end{proof}

\begin{corollary}[Generic real rank and local completion dimension]\label{cor:real}
Let $d\ge4$, $R\ge\Nd$, and fix a labelled missing-pair pattern $S$.
There is a nonempty Zariski-open subset $\Omega_S\subset X_{d,R}$ defined
over $\mathbb R$ and contained in the smooth locus such that
$\Omega_S(\mathbb R)$ is Euclidean open and dense in
\[
X_{d,R}^{\mathrm{sm}}(\mathbb R)
=X_{d,R}^{\mathrm{sm}}\cap\mathbb R^{d\times R}.
\]
At every $W\in\Omega_S(\mathbb R)$, all missing-pair subset ranks equal
$r_d(S')$, and the real completion fiber is, near $W$, a smooth real
manifold of dimension $\delta_d(S)$.  In particular, $\delta_d(S)=0$
gives local uniqueness of the real completion at such $W$.
\end{corollary}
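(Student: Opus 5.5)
We pass from the complex statements to the real points by noting that every condition used so far is the nonvanishing of finitely many polynomials with real (indeed rational) coefficients. The plan has three steps: choose $\Omega_S$, check density of its real points, and identify the local real fiber.

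\emph{Choice of $\Omega_S$.} We take $\Omega_S$ to be the intersection of four sets. The first is the smooth locus (in particular $X_{d,R}^{\circ}$ of Lemma~\ref{lem:regular}). The second is the open set of Lemma~\ref{lem:tangent}. The third is the open set where, for each $S'\subseteq S$, some fixed $r_d(S')\times r_d(S')$ minor of the coordinate matrix of $\{q_a(W):a\in S'\}$ is nonzero. The fourth is the open set on which all $(r_d(S')+1)$-minors already vanish; this holds everywhere, by the upper bound in Proposition~\ref{prop:genericrank}. Because $J_e$ and the defining equations of $X_{d,R}$ are real, each chosen minor is a real polynomial. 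Each is nonzero on $X_{d,R}$ by Proposition~\ref{prop:genericrank}. Hence $\Omega_S$ is a nonempty Zariski-open set defined over $\mathbb R$, and on it every subset rank equals $r_d(S')$.

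\emph{Density of the real points.} This is the main point, and I expect it to be the main obstacle. We must show that a nonzero real polynomial $f$, nonvanishing on $X_{d,R}$, cannot vanish on a Euclidean open subset of $X^{\mathrm{sm}}_{d,R}(\mathbb R)$. Assume the real smooth locus is nonempty; real FUNTFs with all $w_a^Tw_b\ne0$ exist, for instance generic ones. Near a real smooth point $W_0$, the real points form a real-analytic manifold of real dimension $\dim_{\C}X_{d,R}$, by the implicit function theorem applied to the real equations, whose differential has full rank at $W_0$ by Lemma~\ref{lem:regular}. This manifold is therefore Zariski dense in the irreducible component through $W_0$, which is all of $X_{d,R}$ by irreducibility \cite[Theorem~3.3]{BFR2020}. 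Indeed, the complexified local chart is an open subset of $X_{d,R}$, and a holomorphic function vanishing on a totally real submanifold of maximal dimension vanishes identically on that chart. So $f$ cannot vanish on any Euclidean open set of real smooth points, and the complement of its zero set is open and dense. Intersecting finitely many such sets keeps $\Omega_S(\mathbb R)$ open and dense.

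\emph{Local real fiber.} Take $W\in\Omega_S(\mathbb R)$. The rank of $D\pi_S$ restricted to $X_{d,R}$ is constant on the neighbourhood $\Omega_S$. By Lemma~\ref{lem:tangent} this rank equals $\dim X_{d,R}-|S|+r_d(S)$, and the kernel formula holds over $\mathbb R$ since all data are real. The real constant rank theorem on the real manifold $X^{\mathrm{sm}}_{d,R}(\mathbb R)$ near $W$ then shows that the level set $\pi_S^{-1}(\pi_S(W))$ is, locally near $W$, a smooth real submanifold. Its dimension is $\dim X_{d,R}-\operatorname{rank}D\pi_S=|S|-r_d(S)=\delta_d(S)$, where the last equality is the identity used in Proposition~\ref{prop:fiberdim}. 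When $\delta_d(S)=0$, the fiber is a zero-dimensional manifold near $W$, so $W$ is isolated in its fiber, which is local uniqueness of the real completion.
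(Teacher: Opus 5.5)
Your plan follows the paper's route. You take $\Omega_S$ to be the intersection of the regular locus, the hypotheses of Lemma~\ref{lem:tangent}, and one nonvanishing $r_d(S')$-minor for each $S'\subseteq S$. You get Euclidean density from the identity theorem on a maximal totally real chart, and you finish with the real constant-rank theorem. That last step is fine as written: since $\Omega_S\subseteq X_{d,R}^{\circ}$, Lemma~\ref{lem:regular} supplies the real chart at $W\in\Omega_S(\mathbb R)$, and the real and complex ranks of $D\pi_S$ agree there.

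The density step, however, has a gap. At an arbitrary point $W_0\in X_{d,R}^{\mathrm{sm}}(\mathbb R)$ you justify the real implicit function theorem by saying the frame equations have full-rank differential ``by Lemma~\ref{lem:regular}''. That lemma only covers $X_{d,R}^{\circ}$, and the smooth real locus is strictly larger; your parenthetical ``the smooth locus (in particular $X_{d,R}^{\circ}$)'' conflates the two. For example, take an orthonormal basis together with a generically rotated real unit-norm tight frame of $R-d$ vectors. This is a real FUNTF whose non-orthogonality graph is connected, so the annihilator argument in the proof of Lemma~\ref{lem:regular} makes it a smooth point, yet it has orthogonal columns. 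As written, you have only shown that $\Omega_S(\mathbb R)$ is dense in $X_{d,R}^{\circ}(\mathbb R)$. Upgrading to $X_{d,R}^{\mathrm{sm}}(\mathbb R)$ would need density of $X_{d,R}^{\circ}(\mathbb R)$ there, which is the same claim for the polynomial $\prod_{a<b}w_a^Tw_b$, so the argument is circular.

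The missing ingredient is the general local fact the paper invokes. Since $X_{d,R}$ is defined over $\mathbb R$, its ideal is generated by real polynomials. At a smooth real point you can pick $\operatorname{codim}X_{d,R}$ of them with independent differentials, and they cut out $X_{d,R}$ near $W_0$. The real implicit function theorem then gives a real-analytic chart of real dimension $\dim_{\C}X_{d,R}$ whose complexification is an open piece of $X_{d,R}$, and your identity-theorem argument goes through at every smooth real point. Separately, ``real FUNTFs with all $w_a^Tw_b\ne0$ exist, for instance generic ones'' is not a justification. Nonemptiness of $\Omega_S(\mathbb R)$ should come, as in the paper, from Zariski density of the real FUNTF locus in $X_{d,R}$ \cite{BFR2020}.
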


\begin{proof}
Intersect $X_{d,R}^{\circ}$ with the open conditions in
Lemma~\ref{lem:tangent} and the simultaneous maximal-rank conditions of
Theorem~\ref{thm:main}.  All these conditions are defined over $\mathbb R$:
the matrices $q_a(W)$ have real polynomial entries, and maximal rank is
specified by the nonvanishing of their minors.  The resulting open subset
$\Omega_S$ is nonempty.  The real FUNTF locus is Zariski dense in
$X_{d,R}$ by its identification with the complex Zariski closure in
\cite[Section~1 and equation~(1.1)]{BFR2020}; hence
$\Omega_S(\mathbb R)$ is nonempty.

Every nonempty Euclidean open subset of the smooth real locus is Zariski
dense in $X_{d,R}$.  Indeed, near a smooth real point the real implicit
function theorem gives real analytic coordinates that extend to complex
analytic coordinates on $X_{d,R}$.  A polynomial vanishing on a real open
neighborhood vanishes on the corresponding complex neighborhood by the
identity theorem, and hence on the irreducible variety.  The proper closed
complement of $\Omega_S$ therefore has empty interior in the smooth real
locus, proving the stated Euclidean density.

At a real point, a real matrix has the same rank over $\mathbb R$ and
$\C$.  Lemmas~\ref{lem:regular} and \ref{lem:tangent}, applied over
$\mathbb R$, give
\[
\dim_{\mathbb R}\ker D(\pi_S|_{X_{d,R}(\mathbb R)})_W
=|S|-r_d(S)=\delta_d(S).
\]
This rank is constant on $\Omega_S(\mathbb R)$.  The real constant-rank
theorem gives the local manifold assertion, including local uniqueness
when the dimension is zero.
\end{proof}

This corollary concerns the local fiber at a generic smooth real frame.
It neither asserts that arbitrary real observations are completable nor
determines the number of real completions elsewhere in the fiber.

\section{Consequences and specializations}\label{sec:consequences}

\subsection{Matroid union and critical bases}
Let $M_{\off}$ be the off-diagonal Rado matroid from
Section~\ref{sec:matroid}, and let $M_{\mathrm{gr}}$ be the graphic matroid,
both on the labelled edge copies of a fixed set $S$.

\begin{proposition}[Matroid-union description]\label{prop:union}
The Rado matroid induced by $\{L_{e_a}:a\in S\}$ is
$M_{\off}\vee M_{\mathrm{gr}}$.  Thus, for $R\ge\Nd$, the missing-pair
Jacobian matroid is the union of a transversal matroid and a graphic matroid.
\end{proposition}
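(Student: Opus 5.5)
The plan is to compare rank functions. Both sides are matroids on the labelled copies of $S$. The Rado matroid of $\{L_{e_a}\}$ has rank function
\[
r(X)=\min_{Y\subseteq X}\bigl(|X\setminus Y|+\rho_d(Y)\bigr)
\]
by Rado's theorem together with Lemma~\ref{lem:centralizer}. By the matroid union theorem of Nash-Williams and Edmonds, the union $M_{\off}\vee M_{\mathrm{gr}}$ has rank function
\[
r_\vee(X)=\min_{Y\subseteq X}\bigl(|X\setminus Y|+r_{\off}(Y)+r_{\mathrm{gr}}(Y)\bigr).
\]
It therefore suffices to show $r=r_\vee$ on every $X\subseteq S$. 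The second sentence of the proposition then follows at once from Theorem~\ref{thm:main}, since $M_{\off}$ is transversal. Indeed, by Lemma~\ref{lem:Ue} the spaces $U_e$ are coordinate subspaces, so $M_{\off}$ is the transversal matroid of the set system $\{\mathcal S_a\}$, as in the proof of Lemma~\ref{lem:matching}.

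For $r\ge r_\vee$, I would substitute the Rado formula for $r_{\off}$, namely $r_{\off}(Y)\le\sigma_d(Y)$, into $r_\vee$. Combined with Corollary~\ref{cor:split}, this gives for each $Y$
\[
r_\vee(X)\le |X\setminus Y|+\sigma_d(Y)+r_{\mathrm{gr}}(Y)=|X\setminus Y|+\rho_d(Y).
\]
Minimizing over $Y$ yields $r_\vee\le r$.

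For $r_\vee\ge r$, I would argue with independent sets rather than rank formulas. Take an independent set $I$ of $M_{\off}\vee M_{\mathrm{gr}}$, written as $I=I_1\sqcup I_2$ with $I_1$ independent in $M_{\off}$ and $I_2$ a forest. Choose distinct coordinate representatives $S_{\phi(a)}\in U_{e_a}$ for $a\in I_1$ via Hall's theorem, and note that $S_{\phi(a)}\in L_{e_a}$. For $a\in I_2$ with $e_a=\{i,j\}$, use $D_{ij}\in L_{e_a}$. The $D_{ij}$ along a forest are independent in $\DD_d$, and $\OO_d\oplus\DD_d$ is direct, so this gives a transversal of independent vectors $x_a\in L_{e_a}$. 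Hence $I$ is Rado-independent and $r\ge r_\vee$. Conversely, for $r\le r_\vee$, take a Rado-independent $X$, so that $|Y|\le\rho_d(Y)=\sigma_d(Y)+r_{\mathrm{gr}}(Y)$ for all $Y\subseteq X$. The argument of Lemma~\ref{lem:partition} now applies verbatim, without the cardinality assumption $|X|=\Nd$: it gives $|Z|\le r_{\off}(Z)+r_{\mathrm{gr}}(Z)$ for all $Z\subseteq X$. Edmonds' partition criterion then splits $X$ into an $M_{\off}$-independent set and a forest, so $X$ is independent in the union. The two inclusions of independent sets give equality of the matroids.

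The main point requiring care is this last step: verifying that the chain of inequalities in Lemma~\ref{lem:partition} only used Rado's formula for $r_{\off}$ and $\rho_d=\sigma_d+r_{\mathrm{gr}}$, and not the critical cardinality. Apart from that, the argument is bookkeeping, and I expect no real obstacle.
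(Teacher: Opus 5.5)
Your proposal is correct and is essentially the paper's proof. The substantive direction (Rado-independent $\Rightarrow$ partitionable) is exactly the paper's reuse of the chain from Lemma~\ref{lem:partition} without the cardinality hypothesis, followed by Edmonds' partition theorem. Your rank-function inequality $r_\vee\le r$ is the paper's converse $|B|\le r_{\off}(B)+r_{\mathrm{gr}}(B)\le\sigma_d(B)+r_{\mathrm{gr}}(B)=\rho_d(B)$ in rank-function form. The explicit transversal (Hall representatives $S_{\phi(a)}$, which do lie in $L_{e_a}$ itself, together with $D_{ij}$ along the forest) is a valid constructive alternative, but it only re-proves that same easy direction. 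So the paragraph headed ``$r_\vee\ge r$'' is partly redundant.
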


\begin{proof}
An edge subfamily $I$ is Rado-independent exactly when
$|B|\le\rho_d(B)$ for every $B\subseteq I$.  The same rank calculation
as in Lemma~\ref{lem:partition}, with no cardinality assumption, yields
$|B|\le r_{\off}(B)+r_{\mathrm{gr}}(B)$ for all $B\subseteq I$.
The matroid-partition theorem partitions $I$ into an $M_{\off}$-independent
set and a graphic-independent set.  Conversely, any such partition gives
\[
|B|\le r_{\off}(B)+r_{\mathrm{gr}}(B)
\le\sigma_d(B)+r_{\mathrm{gr}}(B)=\rho_d(B).
\]
This proves the matroid-union identity.  Since the spaces $U_e$ are
coordinate subspaces, their Rado matroid is the transversal matroid of the
bipartite graph joining $e$ to each $f\in E(K_d)$ with $e\cap f\ne\varnothing$.
The last assertion follows from Theorem~\ref{thm:main}.
\end{proof}

Consequently standard matroid-union algorithms \cite{Schrijver}, with
matching and forest independence tests, compute $r_d(S)$ and
$\delta_d(S)=|S|-r_d(S)$ without enumerating all edge submultisets.
At the critical length, bases have the following concrete form.

\begin{corollary}[Partition form of critical-length admissibility]\label{cor:partitionform}
For an $\Nd$-edge multigraph $H$ on $[d]$, the inequalities of
Corollary~\ref{cor:squarebasis} hold if and only if the edge copies can be
partitioned as
\[
E(H)=A\sqcup T_{\mathrm{gr}},
\]
where $T_{\mathrm{gr}}$ is a spanning tree and $A$ has cardinality
$\binom d2$ and admits
a bijection
\[
\phi:A\longrightarrow E(K_d)
\]
such that $e_a\cap\phi(a)\ne\varnothing$ for every $a\in A$.
\end{corollary}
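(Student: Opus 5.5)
The plan is to prove the two implications separately. Both directions run through the identity $\rho_d(B)=\sigma_d(B)+r_{\mathrm{gr}}(B)$ of Corollary~\ref{cor:split}, together with the matroid-partition theorem already used in Lemma~\ref{lem:partition}.

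\emph{Necessity.} Assume $|B|\le\rho_d(B)$ for every nonempty $B\subseteq E(H)$. Then Lemma~\ref{lem:partition} applies verbatim. It produces a partition $E(H)=A\sqcup T_{\mathrm{gr}}$ in which $T_{\mathrm{gr}}$ is a spanning tree and $A$ is a basis of $M_{\off}$ of cardinality $\binom d2$. Lemma~\ref{lem:matching} then supplies the bijection $\phi:A\to E(K_d)$ with $S_{\phi(a)}\in U_{e_a}$. By Lemma~\ref{lem:Ue}, this membership is equivalent to $e_a\cap\phi(a)\ne\varnothing$, since $U_e$ is spanned exactly by the $S_f$ with $f\cap e\ne\varnothing$. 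So this direction is just a repackaging of results already proved.

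\emph{Sufficiency.} Assume a partition of the stated form exists. The bijection $\phi$ is a system of distinct representatives for the coordinate sets $\mathcal S_a=\{S_f: f\cap e_a\neq\varnothing\}$. Hence, for every $B\subseteq E(H)$, the set $A\cap B$ is independent in the transversal matroid $M_{\off}$, which gives
\[
|A\cap B|\le r_{\off}(B)\le\dim\sum_{a\in B}U_{e_a}=\sigma_d(B)
\]
by Lemma~\ref{lem:Ue}. Because $T_{\mathrm{gr}}$ is a forest, $|T_{\mathrm{gr}}\cap B|\le r_{\mathrm{gr}}(B)$. Adding the two bounds and applying Corollary~\ref{cor:split} gives
\[
|B|\le\sigma_d(B)+r_{\mathrm{gr}}(B)=\rho_d(B)
\]
for every nonempty $B$. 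This is exactly the family of inequalities in Corollary~\ref{cor:squarebasis}.

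The only point needing care is the first inequality in the sufficiency chain, namely that a matched family really is independent in $M_{\off}$. Since each $U_e$ is a coordinate subspace, choosing $x_a=S_{\phi(a)}\in U_{e_a}$ gives distinct coordinate basis vectors. These realize $A\cap B$ as an independent transversal, so the Rado rank bound holds. The cardinality conditions $|A|=\binom d2$ and $|T_{\mathrm{gr}}|=d-1$ are not needed for this direction; they matter only for the identification in the necessity direction. Apart from this check, the argument is bookkeeping, and I do not expect a genuine obstacle.
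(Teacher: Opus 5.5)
Your proposal is correct and follows essentially the same route as the paper. Necessity comes from Lemmas~\ref{lem:partition} and~\ref{lem:matching}, and sufficiency from the fact that $A$ is $M_{\off}$-independent and $T_{\mathrm{gr}}$ is graphic-independent, so that $|B|\le r_{\off}(B)+r_{\mathrm{gr}}(B)\le\sigma_d(B)+r_{\mathrm{gr}}(B)=\rho_d(B)$; your split $|B|=|A\cap B|+|T_{\mathrm{gr}}\cap B|$ just spells out the first inequality, which the paper states in one line.
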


\begin{proof}
The forward implication is Lemmas~\ref{lem:partition} and
\ref{lem:matching}. Conversely, the existence of such a partition makes
$A$ independent in $M_{\off}$ and $T_{\mathrm{gr}}$ independent in the graphic matroid.
Thus every edge subset $B$ satisfies
\[
|B|\le r_{\off}(B)+r_{\mathrm{gr}}(B)
\le \sigma_d(B)+r_{\mathrm{gr}}(B)=\rho_d(B),
\]
where the middle inequality follows from the defining subspace rank of the
Rado matroid.
\end{proof}

\subsection{An explicit formula in dimension four}

For a labelled multigraph on $[4]$, let $m_{ij}$ be the number of copies
of $\{i,j\}$, put $m=\sum_{i<j}m_{ij}$, and write
\[
m(U)=\sum_{\{i,j\}\subseteq U}m_{ij}\qquad(U\subseteq[4]).
\]
Let $\mathfrak P_4$ be the set of the three partitions of $[4]$ into two
unordered pairs.

\begin{corollary}[Four-dimensional defect formula]\label{cor:d4defect}
For $d=4$, $R\ge9$, and any missing-pair pattern $S$, one has
\begin{equation}\label{eq:d4defect}
\begin{split}
\delta_4(S)=\max\biggl\{&0,\ m-9,\ \max_{i<j}(m_{ij}-6),\\
 &\max_{\substack{U\subseteq[4]\\|U|=3}}(m(U)-8),\
 \max_{\{\{i,j\},\{k,\ell\}\}\in\mathfrak P_4}
 (m_{ij}+m_{k\ell}-8)\biggr\}.
\end{split}
\end{equation}
In particular, $r_4(S)=m-\delta_4(S)$.
\end{corollary}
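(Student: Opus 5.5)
The plan is to specialize the fiber formula of Theorem~\ref{thm:main}, $\delta_4(S)=\max_{T\subseteq S}(|T|-\rho_4(T))$, and to evaluate this maximum by sorting the submultisets $T$ according to the shape of their simple support. The identity $r_4(S)=m-\delta_4(S)$ then follows from $\delta_d(S)=|S|-r_d(S)$, which is the identity used at the end of the proof of Proposition~\ref{prop:fiberdim}.

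\textbf{Tabulating $\rho_4$.} For $d=4$ we have $\rho_4(T)=v(9-v)/2-c$. The possible support shapes give the following values:
\begin{itemize}
\item $v=2$ (a single vertex pair, $c=1$): $\rho_4=6$;
\item $v=3$, connected: $\rho_4=8$;
\item $v=4$, connected: $\rho_4=9$;
\item $v=4$ with two components, which forces the support to be a perfect matching $\{i,j\},\{k,\ell\}$: $\rho_4=10-2=8$;
\item $T=\varnothing$: $\rho_4=0$.
\end{itemize}
These five shapes exhaust all possibilities on four vertices. A disconnected support with $v=3$ cannot occur, since every nontrivial component has at least two vertices.

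\textbf{Upper bound.} Take any $T$ and read off its shape.
\begin{itemize}
\item If the support is the pair $\{i,j\}$, then $|T|\le m_{ij}$, so $|T|-\rho_4(T)\le m_{ij}-6$.
\item If the support is connected on a $3$-set $U$, then $|T|\le m(U)$, so the value is at most $m(U)-8$.
\item If the support is connected on $[4]$, then $|T|\le m$, so the value is at most $m-9$.
\item If the support is a perfect matching, then $|T|\le m_{ij}+m_{k\ell}$, so the value is at most $m_{ij}+m_{k\ell}-8$.
\item If $T=\varnothing$, the value is $0$.
\end{itemize}
Hence $\delta_4(S)$ is at most the right side of \eqref{eq:d4defect}.

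\textbf{Lower bound.} For each expression in \eqref{eq:d4defect} I exhibit a $T$ attaining at least that value, namely the set of \emph{all} edge copies with both endpoints in the relevant vertex set (all copies of $\{i,j\}$; all copies inside $U$; all of $S$; or all copies of $\{i,j\}$ together with all copies of $\{k,\ell\}$). Then $|T|$ equals the listed count exactly. The only subtlety is that the support of such a $T$ may be smaller than intended; for example, the edges inside $U$ might form only a path or a single edge. Monotonicity of $\rho_4$ handles this. Since $\rho_4(T)=\dim\sum_{a\in T}L_{e_a}$ by Lemma~\ref{lem:centralizer}, $\rho_4$ is monotone in $T$. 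Consequently $\rho_4(T)$ is at most the value for the full pair, triangle, $K_4$, or matching respectively, that is, at most $6$, $8$, $9$, or $8$ (for the matching, $L_{ij}+L_{k\ell}$ contains every possible smaller sum). It follows that $|T|-\rho_4(T)$ is at least the listed expression, and the empty set supplies the term $0$. This proves equality.

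\textbf{Main obstacle.} The only step that needs care is this degenerate-support issue in the lower bound. Monotonicity of the subspace sum resolves it directly, so I expect no genuine difficulty beyond checking the table of $\rho_4$ values.
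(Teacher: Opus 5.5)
Your proposal is correct and matches the paper's own proof. Both arguments bound each $|T|-\rho_4(T)$ from above by classifying the simple support of $T$: a single pair, connected on three vertices, connected on $[4]$, or a perfect matching. For the reverse inequality, both take all edge copies inside the relevant vertex set and note that its subspace sum has dimension at most $6$, $8$, $9$, or $8$ even when part of the intended support is absent.

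One wording point: the monotonicity you need is $L_G\subseteq L_{G'}$ for support graphs $G\subseteq G'$, not monotonicity in $T$ as a subset of $S$. The full pair, triangle, matching, or $K_4$ need not occur in $S$. Your parenthetical about $L_{ij}+L_{k\ell}$ already uses the correct version.
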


\begin{proof}
A nonempty support on two vertices has $\rho_4=6$; a support on three
vertices is connected and has $\rho_4=8$.  A support on all four vertices
has $\rho_4=9$ if connected and $\rho_4=8$ otherwise; in the latter case
it consists of two disjoint pairs.  Thus every term
$|T|-\rho_4(T)$ in Theorem~\ref{thm:main} is bounded above by one of the
quantities in \eqref{eq:d4defect}.

Conversely, take all edge copies on a specified pair, on a specified
three-vertex set, on a specified pair partition, or on all four vertices.
Their subspace dimensions are at most $6$, $8$, $8$, and $9$, respectively,
even if some of their prescribed supports are absent.  Each corresponding
quantity in \eqref{eq:d4defect} is therefore at most
$\max_{T\subseteq S}(|T|-\rho_4(T))$.  The empty subfamily gives zero,
proving equality.
\end{proof}

\begin{proof}[Proof of Corollary~\ref{cor:d4intro}]
When $d=4$ and $\Nd=9$, a two-vertex support has
\[
\rho_4=6,
\]
so no pair may carry more than six parallel copies.  The full edge set must
use all four vertices and be connected, since otherwise
$\rho_4(E(H))\le8<9$.

Conversely, assume connectivity and the multiplicity bound six.  Let $B$ be
any nonempty edge submultiset.  Since $H$ has exactly nine edges, $|B|\le9$.
If $v(B)=2$, then $|B|\le6=\rho_4(B)$ by hypothesis.  If $v(B)=3$, its
simple support is connected and $\rho_4(B)=8$; because $|B|\le9$, the only
possible violation is $|B|=9$.  That would place every edge of $H$ on three
vertices, contradicting connectivity of $H$.  If $v(B)=4$ and the support is
connected, then $\rho_4(B)=9$.  If it has two nontrivial components, then
$\rho_4(B)=8$; again, since $|B|\le9$, a violation would force $|B|=9$ and
hence make $H$ disconnected.  Thus all subset inequalities of
Corollary~\ref{cor:squarebasis} hold.
\end{proof}

\begin{example}[Local concentration and disconnected support]\label{ex:d4}
At $R=9$, write $k(ij)$ for $k$ copies of the missing pair $\{i,j\}$.
Formula~\eqref{eq:d4defect} gives
\[
\begin{array}{c|c|c}
\text{missing-pair multigraph}&r_4(S)&\delta_4(S)\\ \hline
6(12)+2(23)+(34)&9&0\\
7(12)+(23)+(34)&8&1\\
5(12)+4(34)&8&1\\
9(12)&6&3
\end{array}
\]
The first two patterns are connected, but the second exceeds the capacity
of a single pair.  The third respects every single-pair capacity and fails
because its two components together have capacity eight.  The last pattern
has three degrees of generic completion ambiguity.  The same ranks and
fiber dimensions hold if fully observed columns are added.
\end{example}

\subsection{Parallel edges and the stabilization threshold}

\begin{remark}[Parallel-edge test]
If $S$ consists of $k$ copies of a single edge, then every nonempty
$T\subseteq S$ has $\rho_d(T)=2d-2$.  Theorem~\ref{thm:main} therefore gives
\[
r_d(S)=\min\{k,2d-2\},
\qquad
\delta_d(S)=\max\{0,k-(2d-2)\}.
\]
For $d=4$, the single-edge capacity is six.
\end{remark}

\begin{remark}
For $d=3$, one has $\Nd=5$, and the critical-length basis specialization reduces to the
known three-dimensional criterion of Bernstein--Farnsworth--Rodriguez
\cite{BFR2020}.  The geometric realization proof above is designed for
$d\ge4$ because its tournament induction starts in dimension four.
\end{remark}

For fixed $d\ge4$, define $R_{\mathrm{stab}}(d)$ to be the least integer
$R_0\ge d+2$ such that the rank formula of Theorem~\ref{thm:main} holds
for every $R\ge R_0$ and every labelled missing-pair pattern.

\begin{corollary}[Bounds on the uniform stabilization threshold]\label{cor:threshold}
For every $d\ge4$,
\[
2d-1\le R_{\mathrm{stab}}(d)\le\binom{d+1}{2}-1.
\]
In particular, $7\le R_{\mathrm{stab}}(4)\le9$.
\end{corollary}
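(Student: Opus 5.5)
The upper bound $R_{\mathrm{stab}}(d)\le\binom{d+1}{2}-1=\Nd$ is immediate from Theorem~\ref{thm:main}: for every $R\ge\Nd$ and every labelled missing-pair pattern the rank formula holds. Since $\Nd\ge d+2$ for $d\ge4$, the integer $\Nd$ is an admissible candidate for $R_0$, and the least admissible value is at most $\Nd$.

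For the lower bound I will show that the rank formula fails at $R=2d-2$. This length satisfies $2d-2\ge d+2$ because $d\ge4$. Then no $R_0\le 2d-2$ can serve, since any such $R_0$ would require the formula at $R=2d-2\ge R_0$. Hence $R_{\mathrm{stab}}(d)\ge 2d-1$.

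For the failure, take $R=2d-2$ and $S=[R]$, with every column carrying the same missing pair $e=\{1,2\}$. The computation from the introduction applies to every $W\in X_{d,R}$: because $WW^T=\frac Rd I_d$,
\[
\sum_{a=1}^R q_a(W)=\Bigl[J_e,\tfrac Rd I_d\Bigr]=0 .
\]
So the $R$ vectors $q_a(W)$ are always linearly dependent, and their rank is at most $R-1$. The formula, on the other hand, predicts $r_d(S)=\min\{R,2d-2\}=R$. This follows because every nonempty $T\subseteq S$ has support a single edge, so $v=2$, $c=1$, and
\[
\rho_d(T)=\frac{2(2d-1)}{2}-1=2d-2 .
\]
Therefore $\min_T\bigl(|S\setminus T|+\rho_d(T)\bigr)=\min\{R,\ R-|T|+2d-2\}=R$ when $R=2d-2$. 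The generic rank is thus strictly below $r_d(S)$, and the formula fails at this length.

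For $d=4$ the two bounds give $7\le R_{\mathrm{stab}}(4)\le 9$, since $\Nd=9$ and $2d-1=7$. The argument has no real obstacle. The only points to check are that $R=2d-2$ lies in the range $R\ge d+2$ required by the definition of $R_{\mathrm{stab}}$, which uses $d\ge4$, and that failure at a single length $\ge R_0$ is enough to rule out every $R_0\le 2d-2$.
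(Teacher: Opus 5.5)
Your proposal is correct and follows the paper's proof: the upper bound is read off from Theorem~\ref{thm:main}, and the lower bound comes from the all-parallel pattern at $R=2d-2$. There the identity $\sum_a q_a(W)=[J_e,\tfrac Rd I_d]=0$ caps the rank at $R-1$, while $r_d(S)=R$. You also explicitly check the side conditions $2d-2\ge d+2$ and $\Nd\ge d+2$, which the paper's short proof only states or leaves implicit.
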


\begin{proof}
The upper bound is Theorem~\ref{thm:main}.  At $R=2d-2\ge d+2$, give
every column the same missing pair.  The identity $\sum_aq_a(W)=0$ from
the introduction forces rank at most $R-1$, whereas the Rado formula gives
$R$.  Thus a uniform threshold cannot be at most $2d-2$.
\end{proof}

The exact threshold within these bounds, patterns with larger missing
blocks, and global real completion counts remain open directions.
Corollary~\ref{cor:real} resolves the local real rank and dimension question
on the smooth generic locus, without addressing those global counts.

\appendix
\section{Direct verification of the four-dimensional tournament base case}\label{app:d4}

This appendix gives a direct verification of the four-dimensional base case
used in the tournament induction.  The argument is entirely through the
annihilator equations for the five-factor moment maps.  Since $\dim\Symo(4)=9$, showing that the common
annihilator in $\Symo(4)$ is zero is equivalent to showing that the moment
differential has rank $9$.

For a zero-branch factor with head $h$, let $w\in Q_h$, so that
$w_h=0$ and $w^Tw=1$.  Its tangent space is
\[
T_wQ_h=\{\delta\in\C^4:\delta_h=0,\ w^T\delta=0\}.
\]
If $S\in\Symo(4)$ annihilates the moment differential of this factor, then
\[
0=\langle S,w\delta^T+\delta w^T\rangle
  =2\delta^TSw
\qquad(\delta\in T_wQ_h),
\]
and hence
\begin{equation}\label{eq:ann-criterion}
Sw\in\Span\{w,e_h\}.
\end{equation}
Conversely, \eqref{eq:ann-criterion} implies annihilation.  Thus a collection
of branch factors has submersive moment map exactly when the only
$S\in\Symo(4)$ satisfying \eqref{eq:ann-criterion} for every factor is
$S=0$.

Up to permutation of the factors, the moment map depends on a tournament
only through the multiset of their heads.  For a branch product, define its
\emph{head multiplicity} to be the four-tuple $(m_1,m_2,m_3,m_4)$, where
$m_h$ counts the factors whose head (equivalently, prescribed zero
coordinate) is $h$.  Since heads are terminal vertices, $m_h$ is the
indegree of vertex $h$ in the full tournament.  Up to isomorphism, the four
tournaments on four vertices have the following sorted full head-count
sequences:
\[
(0,1,2,3),\qquad (0,2,2,2),\qquad
(1,1,2,2),\qquad (1,1,1,3).
\]
The required deletion can be read from the following table.  In each row,
choose a vertex having the head count shown in the middle column and delete
one edge directed into that vertex.  Such an edge exists because the count
is positive.  The right column lists the remaining head multiplicities after
resorting the four entries.
\[
\begin{array}{c|c|c}
\text{full counts}&\text{head count}&\text{remaining counts}\\ \hline
(0,1,2,3)&2&(3,1,1,0)\\
(0,2,2,2)&2&(2,2,1,0)\\
(1,1,2,2)&1&(2,2,1,0)\\
(1,1,1,3)&1&(3,1,1,0)
\end{array}
\]
Thus, after deleting a suitable edge, only the two five-factor head
multiplicities $(3,1,1,0)$ and $(2,2,1,0)$ remain to be checked.

Write
\[
S=
\begin{pmatrix}
a&p&q&r\\
p&b&s&t\\
q&s&c&u\\
r&t&u&-a-b-c
\end{pmatrix}.
\]
For a raw branch vector $w$ with $w^Tw\ne0$, choose a square root of
$w^Tw$ and set $\widehat w=w/\sqrt{w^Tw}\in Q_h$.  The criterion
\eqref{eq:ann-criterion} is invariant under replacing $w$ by any nonzero
scalar multiple, because both $Sw$ and the line $\C w$ scale accordingly.
Hence the annihilator calculation may be carried out with the raw integer
vectors displayed below.

\subsection*{Head multiplicity \texorpdfstring{$(3,1,1,0)$}{(3,1,1,0)}}

After relabeling, take three factors with head $4$, one with head $2$, and one
with head $3$.  Choose the raw vectors
\[
(1,0,-1,0),\qquad
(1,-1,0,0),\qquad
(0,1,1,0),\qquad
(1,0,0,-1),\qquad
(1,1,0,1),
\]
in that order, with zero coordinates $4,4,4,2,3$, respectively.
Applying \eqref{eq:ann-criterion} gives, among the resulting relations,
\begin{align*}
p-s&=0, & a-c&=0, & a-b&=0,\\
q-s&=0, & p+q&=0, & q-u&=0,\\
2a+b+c&=0, & -a+b-r+t&=0, & -2a-b-c-p+t&=0.
\end{align*}
The first five equations give $a=b=c$ and $p=q=s=0$.  Then $q-u=0$
gives $u=0$, while $2a+b+c=4a=0$ gives $a=b=c=0$.  The last two
equations reduce to $-r+t=0$ and $t=0$, hence $r=t=0$.  Therefore
$S=0$.

\subsection*{Head multiplicity \texorpdfstring{$(2,2,1,0)$}{(2,2,1,0)}}

After relabeling, take two factors with head $3$, two with head $4$, and one
with head $1$.  Choose
\[
\begin{aligned}
&(0,1,0,-1),\qquad (1,0,0,-1),\qquad (0,1,1,0),\\
&(1,-1,-1,0),\qquad (0,1,0,-1),
\end{aligned}
\]
with heads $3,3,4,4,1$, respectively.  Condition
\eqref{eq:ann-criterion} yields
\begin{align*}
p-r&=0, & a+2b+c&=0, & p-t&=0,\\
2a+b+c&=0, & p+q&=0, & -b+c&=0,\\
a-b-q-s&=0, & a-c-p-s&=0, & s-u&=0.
\end{align*}
Subtracting the second and fourth equations gives $a=b$; the sixth gives
$c=b$, and hence the second gives $4a=0$.  Thus $a=b=c=0$.  Next
$q=-p$.  The seventh relation gives $s=p$, whereas the eighth gives
$s=-p$, so $p=q=s=0$.  Finally $r=t=p=0$ and $u=s=0$.  Thus again
$S=0$.

In both possible head-multiplicity patterns, the five-factor moment
differential has zero annihilator in $\Symo(4)$ and hence rank $9$.  Since
every four-vertex tournament admits a deletion leading to one of these two
patterns, this proves both the four-dimensional base case and the
deleted-factor assertion in Proposition~\ref{prop:tournament} by an explicit
calculation.

\section*{Use of artificial intelligence}
During the preparation of this work, the author used OpenAI ChatGPT to
assist with literature searches, checking proofs and calculations, and
language polishing.  The author takes full responsibility for the content
of the article.

\end{document}